\let\mathcal\mathscr
\numberwithin{equation}{section}
\newtheorem{theorem}{Theorem}[section] 
\newtheorem{lemma}[theorem]{Lemma}
\newtheorem{proposition}[theorem]{Proposition}
\newtheorem{corollary}[theorem]{Corollary}
\theoremstyle{definition}
\newtheorem{example}[theorem]{Example}
 \newtheorem*{acknowledgements}{Acknowledgements}
\newtheorem{remark}[theorem]{Remark}
\newtheorem{definition}[theorem]{Definition}
\renewcommand{\emph}[1]{\textit{#1}}
\renewcommand{\phi}{\varphi}
\newcommand{\PP}{\mathbb{P}}
\newcommand{\QQ}{\mathbb{Q}}
\renewcommand{\leq}{\leqslant}
\renewcommand{\geq}{\geqslant}
\renewcommand{\c}{\mathbf{c}}
\newcommand{\z}{\mathbf{z}}
 \renewcommand{\b}{\mathbf{b}}
\renewcommand{\r}{\mathbf{r}}
\let\emptyset\varnothing
\DeclareSymbolFont{bbold}{U}{bbold}{m}{n}
\DeclareSymbolFontAlphabet{\mathbbold}{bbold}
\newcommand{\md}[1]{  \left(\textnormal{mod}\ #1\right)}
\renewcommand{\P}{\mathbb{P}}
\newcommand{\Q}{\mathbb{Q}}
\newcommand{\F}{\mathbb{F}}
\newcommand{\N}{\mathbb{N}}
\newcommand{\R}{\mathbb{R}}
\newcommand{\Z}{\mathbb{Z}}
\renewcommand{\l}{\left}
\renewcommand{\r}{\right}
\renewcommand{\b}{\mathbf}
\renewcommand{\c}{\mathcal}
\renewcommand{\epsilon}{\varepsilon}
\renewcommand{\leq}{\leqslant}
\renewcommand{\geq}{\geqslant}
\renewcommand{\#}{\sharp}
\newcommand{\beq}[2]
{
\begin{equation}
\label{#1}
{#2}
\end{equation}
}
\title
[Diophantine stability and second order terms] 
{Diophantine stability and second order terms}
\author{Carlo Pagano} 
\address{Department of Mathematics\\ Concordia University
\\ H3G1M8, Montreal, Canada}
\email{carlo.pagano@concordia.ca}
\author{Efthymios Sofos} 
\address{Department of Mathematics\\
 Glasgow  University\\ G12~8QQ, Glasgow, United Kingdom}
\email{efthymios.sofos@glasgow.ac.uk}
\subjclass[2020] {
11E12,
11N36,
14G05.
} 
\date{}
\begin{document} 
\begin{abstract}
We establish a Galois-theoretic trichotomy governing
Diophantine stability for genus $0$ curves. We use it to 
 prove that  the curve associated to the 
Hilbert symbol is Diophantine stable with probability $1$. Our asymptotic formula 
 for the second
order term    exhibits strong bias towards instability. 
\end{abstract}

\maketitle

\setcounter{tocdepth}{1}
\tableofcontents

\section{Introduction}   \label{s:intro} 
Mazur--Rubin put forward the ``minimalist philosophy" which states that 
a variety defined over $\mathbb Q$ should  typically be \textit{Diophantine stable}, i.e.
it should    acquire no new  points in finite extensions of $\Q$, see~\cite{MR3805014}. They proved many instances of this phenomenon 
for  elliptic curves and later revisited    this topic by
studying averages of modular symbols in~\cite{modular}. The present paper is inspired by their
statistical view-point. For example, we show that $100\%$ of smooth, projective, genus $0$ curves are Diophantine stable
and we give asymptotics for the error term.  It turns out that there is 
a second order term whose logarithmic exponent  has a Galois-theoretic interpretation.

Fix a finite number field extension $L/\Q$ once and for all for the rest of this paper.
We denote the Galois closure by $N(L)/\Q$  and  the corresponding Galois group by    
$\mathrm{Gal}(N(L)/\Q)$.
The set $X_L$ of roots of the minimal polynomial  of a primitive element of $L/\Q$ 
is a transitive $\mathrm{Gal}(N(L)/\Q)$-set. 
We define $$\delta_L:=\frac{\#\{g \in \mathrm{Gal}(N(L)/\Q): g \textrm{ has an orbit of odd length}\}}{\# \mathrm{Gal}(N(L)/\Q)},$$  where, as usual,  
 the elements $g\in \mathrm{Gal}(N(L)/\Q)$ can be  viewed as a permutation on $X_L$. It is clear that $\delta_L\neq 0$ and we shall see that $\delta_L=1$ 
is equivalent to the set $$ \c A_L=\left\{p \textrm{ finite prime in }\Q: \textrm{its decomposition group in } N(L)
 \textrm{ has only orbits of even size} \right\}$$ 
  consisting
 of all but finitely many primes in $\Q$.  
For a    prime $p$   let $\mu_p:= \mathrm{vol}(s,t\in \Z_p:(s,t)_{\Q_p}=1)$,
where $(s,t)_{k}$ denotes the Hilbert symbol in a local field $k$ and $ \mathrm{vol}$ is the normalised $p$-adic Haar measure.
For $s,t \in \Z^2$ we denote  the curve given by $s x_0^2+ t x_1^2=x_2^2 $ in $ \P^2 $ as $C_{s,t} $. When $L/\Q$ is a finite extension 
of number fields the curve $C_{s,t}$ is called Diophantine stable over $L$  if and only if $$C_{s,t}(L)= C_{s,t}(\Q).$$
\begin{theorem}
\label{thm:therm1}
Fix any finite number field extension $L\supsetneq \Q$, any constant $A>0$  and a vector $(a,b)\in \{1,-1\}^2 $ 
that does not equal $(-1,-1)$ if $L$ is not totally complex.
Then for $B\geq 3$ 
we have  
$$  \# \left\{(s,t)\in \Z^2:\hspace{-0.2cm}
\begin{array}{l}
-B\leq s,t \leq B , \\
\mathrm{sign}(s)=a, \mathrm{sign}(t)=b  , \\
C_{s,t} \ \mathrm{Diophantine \ stable \ over \ }L
\!\!\!
\end{array}\right\}=B^2-\frac{c_L B^2}{(\log B)^{\delta_L}}
\bigg( 1 
+O\left(\frac{1}{ (\log \log \log B)^A}\right)
\bigg)
,$$where $$c_L=\frac{1}{\Gamma(1-\delta_L/2)^2}
\l( 1+\mathds 1 (\delta_L=1) (a,b)_\R \prod_{ p\in \c A_L  } (2\mu_p-1) \r)
\prod_{\substack{p=2\\p \ \mathrm{ prime}}}^\infty  
 \begin{cases}\mu_p(1-1/p)^{-\delta_L},  & p\notin \c A_L, \\   (1-1/p)^{-\delta_L}, & p\in \c A_L,\end{cases}$$ $\Gamma$ denotes the Euler 
gamma function and the implied constant depends only on $A$ and $L$.\end{theorem}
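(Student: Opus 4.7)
\medskip
\noindent\textbf{Proof plan.} The plan is to reduce Diophantine stability of $C_{s,t}$ over $L$ to an explicit Hilbert symbol condition and then run a two-variable Selberg--Delange analysis. Because $L \supsetneq \Q$ and $C_{s,t}$ is a smooth conic, any rational point induces an isomorphism $C_{s,t} \cong \P^1$ and thereby infinitely many $L$-points, so Diophantine stability will force $C_{s,t}(\Q) = \emptyset$ and then is equivalent to $C_{s,t}(L) = \emptyset$. By Hasse--Minkowski over $L$, the latter fails precisely when $(s,t)_{L_v} = 1$ at every place $v$ of $L$. The local class field theory identity $(s,t)_{L_v} = (s,t)_{\Q_p}^{[L_v:\Q_p]}$ combined with the fact that $[L_v:\Q_p]$ equals the length of the orbit of the decomposition group attached to $v$ will show that the condition at $v$ is automatic when that orbit has even length and equivalent to $(s,t)_{\Q_p}=1$ when it has odd length. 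Consequently Diophantine stability becomes the existence of a place $p \notin \c A_L$ (allowing $p=\infty$ when $L$ is not totally complex) with $(s,t)_{\Q_p} = -1$, where the archimedean symbol is controlled by the signs $(a,b)$.

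Writing $N^c(B)$ for the number of sign-restricted pairs failing stability, it suffices to count pairs with $(s,t)_{\Q_p} = 1$ at every $p \notin \c A_L$. Since $(s,t)_{\Q_p} = 1$ whenever $p \nmid 2st$, this is effectively a condition on primes dividing $2st$ outside $\c A_L$. I will introduce the two-variable Dirichlet series
\[
F(\sigma,\tau) \;=\; \sum_{s,t\ge 1}\frac{\mathbf{1}\{(s,t)_{\Q_p}=1\ \forall p\notin \c A_L\}}{s^{\sigma}t^{\tau}},
\]
which admits an Euler product. Using the identity $\mathbf{1}[(s,t)_{\Q_p}=1] = \tfrac12(1+(s,t)_{\Q_p})$ and multiplicativity of the Hilbert symbol, a direct computation gives $f_p(1,1) = \mu_p(1-1/p)^{-2}$ for $p \notin \c A_L$ and $(1-1/p)^{-2}$ for $p \in \c A_L$; a Taylor expansion near $(1,1)$ produces ``local exponents'' of $1$ in each of $\zeta(\sigma),\zeta(\tau)$ when $p \in \c A_L$, but only $1/2$ when $p \notin \c A_L$, since at first order exactly half of pairs $(s,t)\bmod p$ satisfy $(s,t)_{\Q_p}=1$. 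Chebotarev's theorem applied to the Galois action on $X_L$ tells us that the density of primes outside $\c A_L$ equals $\delta_L$, so the averaged exponents are $1-\delta_L/2$ and
\[
F(\sigma,\tau) \;=\; \zeta(\sigma)^{1-\delta_L/2}\,\zeta(\tau)^{1-\delta_L/2}\,G(\sigma,\tau),
\]
with $G$ holomorphic and non-vanishing near $(1,1)$. A two-variable Landau--Selberg--Delange theorem will then give the asymptotic $c_LB^2(\log B)^{-\delta_L}$ with $c_L = G(1,1)/\Gamma(1-\delta_L/2)^2$, and pulling $(1-1/p)^{-\delta_L}$ out of each local factor recovers the claimed Euler product.

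When $\delta_L = 1$, the set $\c A_L$ is finite and Hilbert reciprocity $\prod_v (s,t)_v = 1$ couples the values of $(s,t)_p$ on $\c A_L$ to the archimedean symbol $(s,t)_\R = (a,b)_\R$. Tracking this linear dependence through the Dirichlet series will contribute the extra factor $(a,b)_\R\prod_{p\in\c A_L}(2\mu_p-1)$ to $c_L$, accounting for the indicator $\mathds{1}(\delta_L = 1)$ in the formula; this is exactly the ``bias toward instability'' advertised in the abstract. The sign restriction excluding $(a,b) = (-1,-1)$ when $L$ is not totally complex is included so that the archimedean Hilbert condition is not trivially forced.

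The main obstacle will be the triple-logarithmic error $O((\log\log\log B)^{-A})$. A naive Landau--Selberg--Delange application gives only an error of a fixed negative power of $\log\log B$; to improve it to $(\log\log\log B)^{-A}$ one will need an iterated smooth decomposition of primes in $\c A_L$ and its complement, together with uniform control over quadratic character sums encoding the Hilbert symbol across a large range of moduli. The framework will adapt the analytic machinery developed for conic counting problems by Friedlander--Iwaniec and Loughran--Sofos to the Galois-theoretic stratification by $\c A_L$.
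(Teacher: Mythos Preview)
Your reduction step is correct and coincides with the paper's: Diophantine instability of $C_{s,t}$ over $L$ is equivalent to $(s,t)_{\Q_p}=1$ for every prime $p$ in the complement $\c P$ of $\c A_L$, which is exactly the quantity $N(B,\c P)$ from Theorem~\ref{thm:analytic}. The paper then applies Theorem~\ref{thm:analytic} directly after checking that $\c P$ is ``sufficiently random'' in the sense of Definition~\ref{def:indepe}, using uniform Chebotarev bounds (Lemmas~\ref{lem:scarlatti2}--\ref{lem:scarlatti5}, Thorner--Zaman) to verify the five conditions.

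From that point on your proposal and the paper diverge substantially. You propose a two-variable Selberg--Delange analysis of an Euler product $F(\sigma,\tau)$, whereas the paper (via Theorem~\ref{thm:analytic}) follows the Friedlander--Iwaniec scheme: a geometric-large sieve to discard arithmetically complicated $(s,t)$, a $W$-trick to split into progressions modulo a product of primes up to a very slowly growing $z$, and then explicit quadratic character sums in each progression handled with the large sieve for real characters. Your singularity analysis would plausibly recover the main term and the constant $c_L$ when $\delta_L<1$, but two points are not under control in your outline. First, in the case $\delta_L=1$ the extra factor $(a,b)_\R\prod_{p\in\c A_L}(2\mu_p-1)$ does not emerge from the Euler product over finite primes; in the paper it appears only after invoking Hilbert's product formula inside the character-sum expansion (Lemma~\ref{lem:capricafedoesnottakecards}), and it is not clear how your Dirichlet series ``sees'' this global constraint beyond the acknowledgement that it should.

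Second, and more seriously, the triple-log error $O((\log\log\log B)^{-A})$ is exactly the hard part, and you flag it as the main obstacle without offering a mechanism. In the paper this error is produced by the $W$-trick architecture: one takes $z\asymp_L \log\log B$, so $W$ is roughly $(\log B)^{O(1)}$, and the leading constant in each progression is evaluated with relative error $O((\log z)^{-A})$ via Lemmas~\ref{guitarbream}--\ref{eq:glenngould}; the whole scheme rests on uniform equidistribution of primes in $\c P$ in progressions to modulus up to $W$, which is precisely what the effective Chebotarev input supplies. A Tauberian/Selberg--Delange argument on $F(\sigma,\tau)$ gives by default a relative error that is a fixed negative power of $\log B$ (or at best $\log\log B$ after some iteration), and ``an iterated smooth decomposition'' is not a substitute for the explicit three-step structure (simplify / divide / rule) that actually delivers the $(\log\log\log B)^{-A}$ saving here. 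As written, your plan recovers the main term by a different route but leaves the stated error term as an unjustified claim.
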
 

\begin{example}
\label{ex:gaussian} Using Weil restriction, 
we can reinterpret 
Theorem~\ref{thm:therm1}  in the context of~\cite[Conjecture 3.8]{conconcon}. 
For example, when $L=\mathbb Q(\sqrt{-1})$, we   write $x_i=y_i+z_i \sqrt{-1} $ with $y_i,z_i\in \Q$ so that 
 $C_{s,t}$ becomes  
$$
X: \hspace{1cm}   s (y_0^2-z_0^2)+t(y_1^2-z_1^2)=(y_2^2-z_2^2), \ \ sy_0 z_0+t y_1 z_1=y_2 z_2   \hspace{1cm}   \subset  \P^5\times \mathbb A^2
$$ that is equipped with the map $\pi:X\to \mathbb A^2$ sending $(y,z,s,t) $ to $(s,t)$. 
Note that the variety $X$ is not smooth as can be seen by considering the points $s=0,t=1$ and 
$y_0=z_0=1$ and all other $y_i,z_i$ to be $0$.
The fibres of $\pi$ give  a family of  intersections of   quadrics in $\P^5$ and the secondary term in 
 Theorem~\ref{thm:therm1} provides an asymptotic for the probability with which they 
have a $\Q$-rational point; the logarithmic exponent is $\delta_{\mathbb Q(\sqrt{-1})}=1/2$.
\end{example}

\subsection{Perfectly unstable}
As $C_{s,t}$ has  $0$  genus, it is obvious that 
 it is Diophantine unstable  
when $C_{s,t}(\Q)\neq \emptyset $. What is the proportion of obviously unstable curves inside all unstable ones?   
Let $$ r_L=\lim_{B\to\infty} \frac{\#\{(s,t)\in (\Z\cap [-B,B])^2:  C_{s,t} \textrm{\ Diophantine unstable over\ } L \}}
{\#\{(s,t)\in (\Z\cap [-B,B])^2:   C_{s,t}(\Q) \neq \emptyset \}}.$$
This limit always exists in $[1,\infty]$ as the numerator is asymptotic to $B^2/(\log B)^{\delta_L}$ by Theorem~\ref{thm:therm1}
and the denominator will be proved to be    asymptotic to $B^2/ \log B $ in Theorem~\ref{thm:analytic}.
We say that $L/\Q$ is \dotuline{perfectly unstable} when $r_L=\infty$, in other words, when the total number of   unstable curves over $L$ 
far exceeds the number of obviously unstable curves.

We next give a Galois-theoretic characterization of the ratio $r_L$:
\begin{theorem} [Trichotomy]\label{thm:perfectly0}\textnormal{(i)}  We have $r_L=1$ equivalently when $\#\c A_L\leq 1$. This is also 
equivalent to 
every $C_{s,t}$ having a point in $L$ if and only if it has a point in $\Q$.
 \\
\textnormal{(ii)}  We have $1<r_L<\infty  $ equivalently when $2\leq \#\c A_L<\infty  $. \\
\textnormal{(iii)}  $($\textrm{Perfectly unstable}$)$ We have $r_L=\infty$ equivalently when $\#\c A_L=\infty$.
\end{theorem}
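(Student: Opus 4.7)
The plan is to extract the value of $r_L$ directly from the asymptotic formulas and then to reduce the trichotomy to combinatorial data of the Galois action on $X_L$. Summing Theorem~\ref{thm:therm1} over the allowable sign quadrants $(a,b)\in\{\pm 1\}^2$ gives the total count of Diophantine unstable curves as $c_L^{\mathrm{tot}} B^2/(\log B)^{\delta_L}(1+o(1))$, and the denominator of $r_L$ is controlled by Theorem~\ref{thm:analytic}, giving $c_\Q^{\mathrm{tot}} B^2/\log B\,(1+o(1))$. Hence
$$r_L = \Bigl(\lim_{B\to\infty}(\log B)^{1-\delta_L}\Bigr)\cdot\frac{c_L^{\mathrm{tot}}}{c_\Q^{\mathrm{tot}}}.$$
The first substantive step is the equivalence $\delta_L=1 \Leftrightarrow \#\c A_L<\infty$: Chebotarev identifies $1-\delta_L$ with the density of primes $p$ whose Frobenius has all orbits of even size on $X_L$, so either $\c A_L$ contains a positive density of unramified primes, or it is contained in the finite set of primes ramified in $N(L)/\Q$. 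Combined with the display, this immediately yields (iii): if $\#\c A_L=\infty$ then $\delta_L<1$, the logarithmic factor diverges, and $r_L=\infty$.

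For the remaining cases we have $\delta_L=1$, and I would next observe that $L$ cannot be totally complex: otherwise complex conjugation is a non-trivial element of $\mathrm{Gal}(N(L)/\Q)$ with all orbits of length $2$ on $X_L$, contradicting $\delta_L=1$. Thus both $c_L^{\mathrm{tot}}$ and $c_\Q^{\mathrm{tot}}$ receive contributions only from the three quadrants with $(a,b)_\R=1$, and the Euler products for $c_L$ differ from those of $c_\Q$ only at primes of $\c A_L$. Substituting into the formula of Theorem~\ref{thm:therm1} with $\Gamma(1/2)^2=\pi$ collapses the ratio to
$$r_L = \frac{1+A}{2M}, \qquad A := \prod_{p\in\c A_L}(2\mu_p-1),\quad M := \prod_{p\in\c A_L}\mu_p.$$
For $\#\c A_L\leq 1$ a direct check gives $1+A=2M$, so $r_L=1$. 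For $\#\c A_L\geq 2$ I would expand, using $\prod_p(\mu_p\pm(1-\mu_p))$,
$$1+A-2M = 2\!\!\sum_{\substack{T\subseteq\c A_L\\ |T|\text{ even},\,|T|\geq 2}}\prod_{p\in T}(1-\mu_p)\prod_{p\in\c A_L\setminus T}\mu_p,$$
which is strictly positive since every $\mu_p<1$ by the nondegeneracy of the Hilbert symbol, establishing $1<r_L<\infty$ and hence (ii).

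It remains to identify part (i) with the statement that $C_{s,t}(L)\neq\emptyset \Leftrightarrow C_{s,t}(\Q)\neq\emptyset$ for all $(s,t)$. The plan is to use Hasse--Minkowski together with the classical identity $(s,t)_{L_v}=(s,t)_{\Q_p}^{[L_v:\Q_p]}$: $C_{s,t}$ is $L$-soluble iff $(s,t)_{\Q_p}=1$ or $p\in\c A_L$ at every finite $p$, together with $(s,t)_\R=1$ (using the real place coming from $L$ not totally complex). When $\#\c A_L\leq 1$, the product formula for the Hilbert symbol forces the symbol at the possibly exceptional prime to be determined by the others, so the two notions of solubility coincide. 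When $\#\c A_L\geq 2$ I would pick $p_1,p_2\in\c A_L$ and use the existence of $(s,t)$ with prescribed local Hilbert symbols to construct one with $(s,t)_{\Q_{p_1}}=(s,t)_{\Q_{p_2}}=-1$ and trivial Hilbert symbol elsewhere, which is $L$-soluble but not $\Q$-soluble, breaking the equivalence. The main obstacle I anticipate is the Chebotarev-based identification of $1-\delta_L$ with the density of $\c A_L$ together with the careful matching of Euler products under the $\c A_L$-modification; the exclusion of totally complex $L$ in the finite case, while short, is the subtlest point in assembling the final value of $r_L$.
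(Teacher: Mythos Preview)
Your argument is correct and follows essentially the same route as the paper: the paper deduces (iii) from the Chebotarev equivalence $\delta_L=1\Leftrightarrow \#S_0(L/\Q)<\infty$ (its Proposition~\ref{Prop: S0 finite iff delta=1}), and deduces (i)--(ii) by comparing the leading constants in Theorems~\ref{thm:therm1} and~\ref{thm:analytic} via Remark~\ref{rem:Leonardo_Leo_is_a_god}, which in turn rests on the inequality of Lemma~\ref{lem:irrelevant}; the solubility equivalence in (i) is exactly Theorem~\ref{Thm: charact genus 0 stability}.

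The one genuine difference is your proof of the key inequality $1+A>2M$ for $\#\c A_L\geq 2$: the paper argues by induction (Lemma~\ref{lem:irrelevant}), whereas your subset-expansion identity
\[
1+A-2M \;=\; 2\sum_{\substack{T\subseteq \c A_L\\ |T|\ \mathrm{even},\ |T|\geq 2}}\ \prod_{p\in T}(1-\mu_p)\prod_{p\in \c A_L\setminus T}\mu_p
\]
is cleaner and makes positivity immediate. Your in-line observation that $\delta_L=1$ forces $L$ not totally complex (via complex conjugation having no fixed points on $X_L$) is correct and is implicit in the paper's Proposition~\ref{Prop: S0 finite iff delta=1}; it is indeed the point that allows the $(a,b)_\R$-factors to cancel in the ratio $c_L^{\mathrm{tot}}/c_\Q^{\mathrm{tot}}$.
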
 
This is a simplified version of    Theorem~\ref{Thm: charact genus 0 stability} and other results in \S\ref{s:diophantnstabl},
that will  apply to any finite extension $L/K$ and genus $0$ curve.
Next, we  give  statements that make no reference to $\c A_L$.    Firstly,  by cohomology, 
every  extension  $L/\Q$ with odd degree $[L:\Q]$  has $r_L=1$ and is thus not perfectly unstable. 
\begin{theorem} \label{thm:perfectly} 
 \textnormal{(i)} A finite Galois extension  $L/\Q$ is perfectly unstable if and only if $[L:\Q]$ is even.
\\ \textnormal{(ii)} There are infinitely many    $L/\Q$ with $[L:\Q]=6$  that are not perfectly unstable and have $r_L\neq 1$. 
\end{theorem}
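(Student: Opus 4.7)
The plan is to separate the two parts and reduce each to a group-theoretic question about the $G$-set $X_L$, where $G=\Gal(N(L)/\Q)$, then apply Theorem~\ref{thm:perfectly0}.

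For (i), when $L/\Q$ is Galois we have $N(L)=L$ and $X_L$ coincides with $G=\Gal(L/\Q)$ acting on itself by left multiplication. Under this regular action every orbit of $g\in G$ has length $\ord(g)$, so $g$ acts with only even-length orbits iff $\ord(g)$ is even. Chebotarev density therefore gives $\#\c A_L=\infty$ iff $G$ contains an element of even order, which by Cauchy's theorem is equivalent to $2\mid |G|=[L:\Q]$. Combined with Theorem~\ref{thm:perfectly0}(iii) this yields (i).

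For (ii), I would take $L/\Q$ of degree $6$ whose Galois closure $N/\Q$ has Galois group $A_4$, acting on $X_L=A_4/H$ with $|H|=2$; concretely, the natural action of $A_4$ on the $2$-element subsets of $\{1,2,3,4\}$. A case-check over the four $A_4$-conjugacy classes verifies that every $g\in A_4$ has an odd-length orbit on $X_L$: the identity fixes all six cosets; $3$-cycles give orbit type $(3,3)$; double transpositions give $(1,1,2,2)$. Hence $\c A_L$ contains no prime unramified in $N/\Q$, so $\c A_L$ is finite. The same enumeration over subgroups identifies those acting with only even-length orbits: the Klein four subgroup $V$ (orbit type $(2,2,2)$) and $A_4$ itself (one orbit of length $6$). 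Thus $\c A_L$ is exactly the set of ramified primes $p$ with $D_p(N/\Q)\in\{V,A_4\}$.

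To produce infinitely many such $L$ with $\#\c A_L\geq 2$, I would use Kummer theory over the cyclic cubic subfield $K=N^V$: each $A_4$-extension $N/\Q$ containing $K$ has the form $N=K(\sqrt{\beta},\sqrt{\sigma\beta})$ with $\beta\in K^\times$ satisfying $N_{K/\Q}(\beta)\in K^{\times 2}$, where $\sigma$ generates $\Gal(K/\Q)$. Given two rational primes $p_1,p_2$ splitting completely in $K$, one adjusts $\beta$ (by a Grunwald-Wang style local-to-global argument) so that at each prime of $K$ above $p_1,p_2$ the resulting inertia in $N/K$ has order $2$ with non-trivial residue Frobenius, forcing $D_{p_i}(N/\Q)=V$. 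Letting $K$ vary over the infinitely many cyclic cubic extensions of $\Q$ (or $\beta$ vary within a fixed $K$) then produces infinitely many distinct $L=N^H$ with $2\leq\#\c A_L<\infty$, whence by Theorem~\ref{thm:perfectly0}(ii) each is not perfectly unstable and has $r_L\neq 1$.

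The main obstacle will be matching the prescribed local behaviour at the primes of $K$ above $p_1,p_2$ with the global Galois constraint $N_{K/\Q}(\beta)\in K^{\times 2}$; this amounts to a Selmer-style existence problem, to be resolved by careful bookkeeping in $K^\times/K^{\times 2}$ combined with Chebotarev density to supply global elements with the desired local images.
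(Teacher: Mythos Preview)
Your argument for (i) is correct and coincides with the paper's: in the regular action every orbit of $g$ has length $\ord(g)$, so Cauchy's theorem together with Chebotarev and Theorem~\ref{thm:perfectly0}(iii) gives the equivalence. This is exactly the content of Corollary~\ref{cor:lastoneok?}(b).

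For (ii) your framework also matches the paper's: the Galois set is $A_4$ acting on the six $2$-subsets of $\{1,2,3,4\}$ (equivalently, the six faces of a cube), your cycle-type check correctly gives $\delta_L=1$, and your identification of $V$ and $A_4$ as the only subgroups with all orbits even is right. The construction via $N=K(\sqrt{\beta},\sqrt{\sigma\beta})$ over a cyclic cubic $K$ with $N_{K/\Q}(\beta)$ a square is precisely the paper's setup (Theorem~\ref{Thm: infinitely many with delta=1} and \S5.4).

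The gap is that the step you flag as ``the main obstacle'' is the entire substance of the paper's proof, and an appeal to a Grunwald--Wang style argument is not enough here. The difficulty is that you must simultaneously (a) keep the norm of the Kummer generator a rational square, and (b) force, at \emph{each} of the three primes of $K$ above $p_i$, the local $V$-extension to be the full $(\Z/2\Z)^2$ (so that $e_{p_i}f_{p_i}=4$). These local conditions at the $\sigma$-conjugate primes are not independent of one another once (a) is imposed, so a naive local prescription does not close. The paper resolves this by an explicit three-prime construction: fix a class-number-one cyclic cubic $E$, choose generators $\pi(p_j)$ at three split primes $p_1,p_2,p_3$ that are totally positive and $\equiv 1\pmod{8}$, set $\alpha=\beta\sigma(\beta)$ with $\beta=\sigma(\pi(p_1))\sigma(\pi(p_2))\sigma(\pi(p_3))$, use Hilbert reciprocity in $E$ to swap the relevant quadratic symbols, and then apply Chebotarev in the compositum of the ray class field of $E$ with the Galois closures attached to $p_1,p_2$ to find $p_3$ realising the required sign pattern. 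This is what forces $D_{p_1}=D_{p_2}=V$, hence $\{p_1,p_2\}\subseteq \c A_L$. Your proposal points in the right direction but does not supply this mechanism; to complete the proof you need either to carry out this reciprocity/Chebotarev argument or to give a genuine Grunwald--Wang argument that respects the $\sigma$-equivariance and the norm constraint.
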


Finally, let us remark that 
the recent work   of   Fouvry--Koymans--Pagano~\cite{fouvrini} 
and Koymans--Morgan--Smit~\cite{koymorsmi}
dealt with a    situation analogous to 
Theorem~\ref{thm:therm1} 
using  a hyperbolic height.
These works provide a statistically valid  formula  for   Selmer groups of rational quadratic twists 
by square-free numbers $d$
over a fixed quadratic extension
 $L/\mathbb Q$:
in the language of the present paper, the dominant term of their formula consists precisely of the primes that  divide   
$d$ and 
lie  in $\c A_L$.

\subsection{The analytic result}

For any set of primes $\c P$ we define 
$$N(B,\c P):= \# \left\{(s,t)\in \Z^2: \begin{array}{l}
-B\leq s,t \leq B , \\
\mathrm{sign}(s)=a, \mathrm{sign}(t)=b  , \\
s x_0^2+ t x_1^2=1 \textrm{ has a }\Q_p\textrm{-point } \forall p\in \c P
\end{array}\right\}.$$  In~\S\ref{s:diophantnstabl}  we shall use class field theory to express the counting function in Theorem~\ref{thm:therm1} 
through  $ N(B,\c P_L)$.
We shall deduce Theorem~\ref{thm:therm1}    by the following result, proved in \S\ref{s:checkingsolubility}.

For a primitive Dirichlet character $\psi$ denote  its conductor by $q_\psi$. The symbol $(\frac{\cdot }{\cdot})$ denotes the Kronecker quadratic symbol.
We will deal with sets of primes  that are sufficiently random, in the 
sense that they are independent to the quadratic residues modulo every large enough discriminant.
\begin{definition}\label{def:indepe} We say that a set of primes $\c P$ is ``sufficiently random" if  
\begin{enumerate} \item for   all primitive Dirichlet characters $\psi$   and odd square-free integers $\beta$ coprime to $q_\psi$
  there exists $c_\psi(\beta) \in \mathbb C$ such that $$\lim_{x\to \infty} \frac{1}{x} \sum_{\substack{ p\leq x \\ p\in \c P }} 
\psi(p)\left(\frac{p}{\beta}\right) \log p = c_\psi(\beta) ,$$
\item  there exists an ascending unbounded  function  $\c L:[1,\infty)\to [1,\infty)$  
  such that  for each fixed $A>0$ and all    $\psi,\beta$ as above 
we have    \begin{equation}\label{AssumptionsonAAA} \ \ \ x\geq \max\{\c L(q_\psi),\exp(\beta^{1/200})\}
\Rightarrow  \sum_{\substack{ p\leq x \\ p\in \c P }} 
\psi(p)\left(\frac{p}{\beta}\right) \log p =
c_\psi(\beta) x+ O\l(  \frac{x}{(\log x)^A} \r),\end{equation}
where the  implied constant   depends only on $A$ and $\c P$,
\item  $\varpi:= c_1(1)$ is non-zero, \item       if $\varpi=1$ then $\c P$ contains all large enough primes,
\item for all but finitely many odd square-free $\beta$ we have    $\sup_\psi\{|c_\psi(\beta)|\}= 0 $, where the supremum is taken over all primitive 
Dirichlet characters $\psi$ with conductor coprime to $\beta$.
\end{enumerate}  \end{definition} The set of primes on any 
 coprime arithmetic progression is an example of such a set $\c P$. In fact,
any set of primes coming from Chebotarev conditions is ``sufficiently random".

If  $\varpi\neq 1$ we let $z_B$ be the largest solution of $\log B \geq (\log \c L(\mathrm e^{3z} ))^{\frac{8}{1-\varpi}}$. 
If $\varpi=1$ we let $z_B=\log \log B$. 
\begin{theorem}\label{thm:analytic} Assume that $\c P$ is any ``sufficiently random"
set of primes in the sense of Definition~\ref{def:indepe}. Fix any $(a,b) \in \{-1,1\}^2 $ and $A>0$.
Then for all $ B\geq \c L(\mathrm e^{3z_B})$   we have  
$$N(B,\c P)= c(\c P)\frac{B^2}{(\log B)^{\varpi} }+O\left(\frac{B^2}{(\log B)^{\varpi}(\log \min\{\log \log B, z_B\})^{A}}\right)
,$$ where    the  implied constant depends at most on $A$ and $\c P$ and $$c(\c P)=\frac{1}{\Gamma(1-\varpi/2)^2}
\l( 1+\mathds 1 (\varpi=1) (a,b)_\R
\prod_{ p\notin \c P  } (2\mu_p-1) \r)
\prod_{\substack{p=2\\p \ \mathrm{ prime}}}^\infty  
 \begin{cases}\mu_p(1-1/p)^{-\varpi},  & p\in \c P, \\   (1-1/p)^{-\varpi}, & p\notin \c P.\end{cases}$$ 
\end{theorem}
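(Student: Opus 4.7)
The plan is to transform the local solubility conditions into multiplicative constraints on the squarefree parts of $s$ and $t$, expand into a sum of character twists, and then apply a quantitative Selberg--Delange analysis whose inputs are provided by Definition~\ref{def:indepe}(2). I would first observe that $sx_0^2 + tx_1^2 = 1$ has a $\Q_p$-point if and only if the Hilbert symbol $(s,t)_{\Q_p}$ equals $1$, and this quantity depends only on $s,t$ modulo squares in $\Q_p^{\ast}$. Writing $s = \varepsilon_1 s_1 s_2^2$ and $t = \varepsilon_2 t_1 t_2^2$ with $s_1, t_1$ positive odd squarefree integers, the condition at an odd prime $p \in \c P$ with $p \nmid s_1 t_1$ is automatic; for $p \mid s_1$ with $p \nmid t_1$ it reduces to the Legendre condition $(t/p) = 1$, and symmetrically. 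The conditions at $p = 2$ and at the real place become sign and congruence conditions supported on a finite set of moduli.

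Using the identity $\mathds 1[(t/p)=1] = (1 + (t/p))/2$ and opening the product over primes in $\c P$ dividing $s_1 t_1$ produces an expansion
\[
\mathds 1[\text{local conditions hold}] = \sum_{\substack{d_1 \mid s_1,\, d_2 \mid t_1 \\ d_1, d_2 \text{ squarefree, supported in } \c P}} 2^{-\omega(d_1 d_2)} \left(\tfrac{t}{d_1}\right)\left(\tfrac{s}{d_2}\right) \cdot (\text{archimedean and $2$-adic factor}).
\]
Summing over $(s,t) \in [-B,B]^2$ with the prescribed signs reduces the problem to one-variable sums over integers whose squarefree kernel is built from primes in $\c P$, weighted by Kronecker symbols. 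The associated Dirichlet series has a branching singularity of order $\varpi$ at $s=1$, and the required analytic input is packaged exactly as in Definition~\ref{def:indepe}(2).

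I would then run a Selberg--Delange style argument on each piece, with two Hankel-contour integrations (one per variable $s$, $t$) producing the prefactor $\Gamma(1-\varpi/2)^{-2}$ together with the $(\log B)^{-\varpi}$ saving. The Euler factors assemble locally: at $p \in \c P$ the solubility density $\mu_p$ appears, paired with the normalisation $(1-1/p)^{-\varpi}$; at $p \notin \c P$ only $(1-1/p)^{-\varpi}$ remains since no constraint is imposed. In the $\varpi = 1$ case, Definition~\ref{def:indepe}(4) forces $\c P$ to contain all large primes, so Hilbert reciprocity $(a,b)_\R \prod_p (s,t)_{\Q_p} = 1$ ties the conditions at $\c P$ to a consistency relation at the finite complement. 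Writing $\mathds 1[x = 1] = (1+x)/2$ on $\{\pm 1\}$ and averaging $\mathbb{E}[(s,t)_{\Q_p}] = 2\mu_p - 1$ over each excluded prime then produces the extra factor $(a,b)_\R \prod_{p \notin \c P}(2\mu_p - 1)$ in $c(\c P)$.

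The hardest step is quantitative uniformity. Definition~\ref{def:indepe}(2) provides the error $x/(\log x)^A$ only once $x \geq \max\{\c L(q_\psi), \exp(\beta^{1/200})\}$, so truncating the character expansion in $(d_1, d_2)$ at roughly $\exp(z_B)$ and balancing the truncation error against the Selberg--Delange tail is delicate; the threshold $B \geq \c L(\mathrm e^{3z_B})$ and the final error $(\log\min\{\log\log B, z_B\})^{-A}$ are precisely the output of that trade-off. A secondary difficulty, in the $\varpi = 1$ regime, is to cleanly separate the main and reciprocity-secondary contributions encoded in $c(\c P)$ without losing the error saving, which requires tracking the archimedean sign character and the finitely many bad primes through the contour deformations in parallel with the primary character sums.
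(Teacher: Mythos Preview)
Your overall framework---Hilbert symbols, the divisor-type expansion $\mathds 1[(t/p)=1]=\tfrac12(1+(t/p))$, the Hilbert reciprocity treatment of the $\varpi=1$ case, and a Selberg--Delange endgame---matches the Friedlander--Iwaniec method that the paper also follows. The paper's organisation is different, though: rather than factoring $s=\varepsilon_1 s_1 s_2^2$ directly, it first uses the geometric-large sieve (Theorem~\ref{thm:generalis}) to discard all pairs $(s,t)$ with a large common prime factor or a large square factor, then partitions the surviving pairs into residue classes modulo $W=2^{3+k_2}\prod_{2<\ell\le z}\ell^{1+k_\ell}$ (the $W$-trick of \S\ref{s:probloc}). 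Inside each progression the $2$-adic and small-prime data are frozen, so the Hilbert-symbol conditions reduce to a product over primes $>z$ dividing squarefree integers, and the leading constant emerges already factored as an Euler product. Your direct squarefree factorisation could in principle be made to work, but the square parts $s_2,t_2$ and the $2$-adic periodicity would then have to be carried through the whole argument by hand.

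There is a genuine gap in your outline, however. After opening the product you do \emph{not} arrive at ``one-variable sums'': the expansion contains cross terms of the shape $\bigl(\tfrac{e_2}{d_1}\bigr)\bigl(\tfrac{d_2}{e_1}\bigr)$ with $d_1d_2=s_1$, $e_1e_2=t_1$, which are genuinely bilinear in $(s_1,t_1)$ and cannot be separated by Selberg--Delange alone. The paper (following Friedlander--Iwaniec) controls these with the large sieve for real characters \cite[Lemma~2]{MR2675875}, showing that only the ranges $\max\{d_1,e_1\}\le(\log B)^{20}$ or $\max\{d_2,e_2\}\le(\log B)^{20}$ survive (Lemma~\ref{housephoneisringingandwakingmeup}); only after this truncation does the problem collapse to the one-variable character sums of Lemma~\ref{lem:panolino}, where Definition~\ref{def:indepe}(2),(5) finally enter. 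Your proposed ``truncation of the character expansion at roughly $\exp(z_B)$'' conflates two distinct mechanisms: the quadratic large sieve truncates the bilinear sum at a power of $\log B$ and uses no information about $\c P$ at all, whereas the parameter $z_B$ governs only the $W$-modulus and the range in which the Siegel--Walfisz-type hypothesis on $\c P$ is applicable.
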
  
When $\c P$ consists of all primes Serre  proved the  upper bound $O(B^2/\log B)$ using the large sieve~\cite[Th\'eor\`em 2]{MR1075658}
and asked whether this is the right order of magnitude~\cite[Exemple 2]{MR1075658}.
This was answered in the affirmative  by Friedlander--Iwaniec~\cite{MR2675875}, who used 
the large sieve inequality for quadratic characters to prove   asymptotics. We follow closely their approach as
 the main   changes needed concern   sums  of the form $$ \sum_{\substack{ n\leq x \\
p\mid n \Rightarrow p\in \c P }} a_n \chi(n),$$ where $\chi$ is a non-principal Dirichlet character. 
It is here where the third assumption regarding $\c P$ in Theorem~\ref{thm:analytic} is needed.
Theorem~\ref{thm:analytic}  recovers the Serre case because when $\c P$ contains all primes, the three assumptions   
are  satisfied with $\c L(q)=\mathrm e^q$ by the Siegel--Walfisz theorem for primes in progressions.

Friedlander--Iwaniec~\cite[Theorem 4]{MR2675875}  
also proved   matching upper and lower bounds for any set   of primes  $\c P$ 
of density $\leq 1/2$ via  the Brun sieve. Their result does not make any assumption on the structure of $\c P$.
One may interpret Theorem~\ref{thm:analytic} as saying that 
if  $\c P$ has the specific structure in Definition~\ref{def:indepe}  then 
their method yields asymptotics regardless of density. Tim Browning asked us         
whether there is a set of primes $\c P$ and a positive constant $c$ such that both limits 
 $$  \liminf_{B\to \infty} \frac{N(B,\c P)}{B^2(\log B)^{-c}}, \ \ \ 
\limsup_{B\to \infty} \frac{N(B,\c P)}{B^2(\log B)^{-c}} $$ exist but are not equal; we do not have an answer.

\subsection{The geometric-large sieve}

The secondary goal of this paper is to show that it is possible to adapt the $W$-trick of Green--Tao
in order to prove   asymptotics for the number of everywhere locally soluble varieties in some generality.
A crucial element of this   strategy is a common generalisation of the geometric and the large sieve that we shall   give in Theorem~\ref{thm:generalis}.
Let us first see what does the combined  sieve say about the problem of Loughran-Smeets~\cite{MR3568035},
where the random equations are given by the fibres of a  dominant morphism $f: V \to \PP^n$. Here,  $V$ is a proper,  smooth projective variety over $\Q$,  
$f$ has a   geometrically integral generic fibre and   the fibre  over every
codimension one point of $\P^n_\Q$ has an irreducible component of multiplicity one.

At this level of generality Loughran--Smeets~\cite[Theorem 1.2]{MR3568035}
showed that when we  order   $\P^n(\Q)$ by   the standard Weil height $H$ on $\PP^n(\QQ)$
and assume that    at least one fibre of $f$ is everywhere locally solvable,
then there exists a Galois-theoretic constant $\Delta(f)$ (given in~\cite[Theorem 1.2]{MR3568035})
such that   the number of everywhere locally soluble fibres satisfies
\beq{eq:uplougsmets}{\#\l\{x \in \P^n(\Q):x\in f\l(V\l(\mathbb A_\Q\r)\r), H(x)\leq B \r\}=O\l( \frac{B^{n+1} } {(\log B)^{\Delta(f)}}\r).}
They conjectured that   this is the right order of magnitude~\cite[Conjecture 1.6]{MR3568035}. We prove that 
certain fibres can be ignored when trying to verify this conjecture.  \begin{theorem}\label{thm:loughrnsmt}
Let $f, V$ and $H$ be as above. Let  $Y$ be any closed subscheme of $\P^n_\Q$ of codimension $k\geq 2 $.
For all $B,z\geq 2 $ we  have  \[ \#\l\{\b x \in \P^n(\Q): \begin{array}{l}
x\in f\l(V\l(\mathbb A_\Q\r)\r) , H(x)\leq B, \\ x \md p \in Y(\F_p) \textrm{ for some prime } p>z
  \end{array}\hspace{-0,15cm} \right\} \ll \frac{1}{\min\{z^{k-1}(\log z) ,B^{1/5}\}} \frac{B^{n+1}}{(\log B)^{\Delta(f)}}
 ,\] where the implied constant depends only on $f, V$ and $Y$. 
\end{theorem}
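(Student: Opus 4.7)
The plan is to split the primes $p>z$ into two ranges — a \emph{moderate} range $z<p\leq B^{1/5}$ and a \emph{large} range $p>B^{1/5}$ — and to bound each by a different sieve argument, then combine the two bounds into the stated $\min$.

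For the moderate primes, I would revisit the large-sieve proof of the Loughran--Smeets upper bound \eqref{eq:uplougsmets}. That proof imposes local solubility of the fibre of $f$ as a system of congruence conditions in a sieve; imposing in addition the single extra congruence $x\md p\in Y(\F_p)$ at the prime $p$ multiplies the local factor at $p$ by $|Y(\F_p)|/|\P^n(\F_p)|\ll p^{-k}$, since $Y$ has codimension $k$. This should yield
\[
\#\l\{x\in\P^n(\Q):x\in f\l(V(\AA_\Q)\r),\ H(x)\leq B,\ x\md p\in Y(\F_p)\r\}\ll \frac{1}{p^{k}}\cdot\frac{B^{n+1}}{(\log B)^{\Delta(f)}}
\]
uniformly in $p$. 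Summing over $z<p\leq B^{1/5}$ and using $\sum_{p>z}p^{-k}\ll 1/(z^{k-1}\log z)$ (valid for $k\geq 2$ by partial summation from the prime number theorem) produces the bound $B^{n+1}/(z^{k-1}(\log z)(\log B)^{\Delta(f)})$, which is the first of the two bounds hidden in the $\min$.

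For the large primes I would invoke the Ekedahl geometric sieve. Choose a finite family of homogeneous polynomials $F_1,\dots,F_m\in\Z[x_0,\dots,x_n]$ of degrees at most some $d$ (depending only on $Y$) whose common vanishing locus cuts out $Y$ away from a finite set of bad primes. If $x\in\P^n(\Q)$ is represented by a primitive integer vector $\b x$ with $\max_i|x_i|\leq B$ and $x\md p\in Y(\F_p)$ for some prime $p>C_Y B^d$, then each $F_j(\b x)$ is an integer of absolute value $\ll B^d<p$ that is divisible by $p$, hence is zero; therefore $x\in Y(\Q)$. With the constant in $p>C_Y B^d$ absorbed by enlarging the threshold to $B^{1/5}$ (or by reducing, if possible, to generators of degree $\leq 5$), one is reduced to counting $\Q$-points on the codimension-$k$ closed subscheme $Y$ of height $\leq B$. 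This is $\ll B^{n+1-k}\leq B^{n-1}$, comfortably smaller than $B^{n+1}/(B^{1/5}(\log B)^{\Delta(f)})$, and accounts for the second bound in the $\min$.

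The main technical obstacle is the uniformity of the Loughran--Smeets bound in the extra congruence at the single prime $p$, with the correct $p^{-k}$ saving. In effect one must re-run their large-sieve argument with one additional congruence inserted at $p$ and carefully track the resulting power of $p$. The hybrid ``geometric + large'' sieve developed in Theorem~\ref{thm:generalis} later in the paper is designed precisely to produce such combined bounds, so the cleanest route is likely to deduce Theorem~\ref{thm:loughrnsmt} as a direct consequence of that more general machinery applied to $f$ and $Y$.
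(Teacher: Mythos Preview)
Your final paragraph is exactly the paper's proof: Theorem~\ref{thm:loughrnsmt} is deduced in two lines from Theorem~\ref{thm:generalis} by taking $\Omega$ to be (primitive integer representatives of) the locally soluble fibres of $f$, and supplying the two inputs $\omega(p)\ll 1/p$ (so $\limsup\omega(p)=0$) from \cite[Lemma~3.3]{MR4269677} and $L(T)\gg(\log T)^{\Delta(f)}$ from \cite[\S4.2.3]{MR3568035}. With $n$ replaced by $n+1$ and $m=1$, Theorem~\ref{thm:generalis} yields the two terms $B^{n+1}/(z^{k-1}(\log z)(\log B)^{\Delta(f)})$ and $B^{n+1-(k-1)/4}$, and since $(k-1)/4\geq 1/4>1/5$ the second is absorbed into $B^{n+1}/(B^{1/5}(\log B)^{\Delta(f)})$.

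Your longer two-range sketch is morally the proof of Theorem~\ref{thm:generalis} itself (the moderate range is Lemma~\ref{corollarge}, the large range is Lemma~\ref{lembharg}), but the large-prime step as you wrote it has a gap: forcing $F_j(\b x)=0$ from $p\mid F_j(\b x)$ requires $p>C_Y B^{d}$ with $d$ the maximal degree among the $F_j$, and there is no reason one can arrange $d\leq 5$ for an arbitrary closed subscheme $Y$. Bhargava's geometric sieve (Lemma~\ref{lembharg}) avoids any such degree restriction, which is precisely why packaging everything through Theorem~\ref{thm:generalis} is the clean route you end up recommending.
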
 The prefactor $1/\min$ gives a saving over the conjectured growth if   $z=z(B)$   tends to infinity, no matter how slow. 
This   allows us, for example, to ignore   varieties whose coefficients have a   common prime divisor $p>z$. That 
   enables  one  to write easy detector functions for $p$-adic solubility. 

To state the geometric-large sieve let us recall the two individual sieves. The geometric sieve is one of the few sieves   that  
are capable of proving     asymptotics; it was introduced by Ekedahl~\cite{MR1104780}.
Its   effective version is due to Bhargava~\cite[Theorem 3.3]{1402.0031}:
\begin{lemma}[Geometric sieve] \label{lembharg} 
Let $U$ be a compact region in $\R^n$ having finite measure, and let $Y$ be any closed subscheme of $\mathbb A_{\Z}^n$ of codimension $k\geq 1 $.
Let $B,z$   be   real numbers $\geq 2 $. Then   \[\#\{\b x \in B U\cap \Z^n: \b x \md p \in Y(\F_p) \textrm{ for some prime } p>z \}=
O\l(\frac{B^n}{z^{k-1}\log z }+B^{n-k+1}\r) ,\] where the implied constant depends only on $U$ and $Y$.\end{lemma}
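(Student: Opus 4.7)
The plan is to adapt the classical Ekedahl dichotomy in its effective form. Let $f_1, \dots, f_r \in \Z[x_1, \dots, x_n]$ generate the ideal of $Y$, set $D := \max_i \deg f_i$, and split the primes $p > z$ into a \emph{medium} range $z < p \le M$ and a \emph{large} range $p > M$, where $M$ is chosen to balance the two contributions.

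For the medium range, away from a finite exceptional set of primes depending only on $Y$, a Lang--Weil (or elementary Schwartz--Zippel) estimate gives $\#Y(\F_p) \ll p^{n-k}$, with implied constant depending only on the defining data of $Y$. Partitioning $BU \cap \Z^n$ by residue classes modulo $p$ and counting lattice points in each class yields, for each such $p$,
\[ \#\{x \in BU \cap \Z^n : x \bmod p \in Y(\F_p)\} \ll \#Y(\F_p)\Bigl(\tfrac{B^n}{p^n} + 1\Bigr) \ll \frac{B^n}{p^k} + p^{n-k}. \]
Summing over $z < p \le M$ and applying partial summation against the prime number theorem, the first term contributes $\ll B^n/(z^{k-1}\log z)$ for $k \ge 2$, and the second contributes $\ll M^{n-k+1}/\log M$.

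For the large range, the polynomial bound $|f_i(x)| \le CB^D$ on $BU$ shows that any prime $p > CB^D$ with $p \mid f_i(x)$ for every $i$ already forces $f_i(x) = 0$, so $x \in Y(\Z)$. The contribution from such primes is therefore bounded by $\#(Y(\Z) \cap BU)$, which an elementary projection onto $n - k$ coordinates shows is $O(B^{n-k+1})$.

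The main obstacle is reconciling the two regimes. Taking $M \asymp B$ already keeps the residual $M^{n-k+1}/\log M$ within $B^{n-k+1}$, but one must still control the intermediate window $B < p \le CB^D$, which can be handled by reusing the same Lang--Weil bound together with the crude tail estimate $\sum_{B < p \le CB^D} p^{-k} \ll B^{1-k}/\log B$. The handful of ``bad'' primes at which $Y$ loses codimension upon reduction modulo $p$ contribute $O(B^n)$ and are absorbed into the constant depending on $Y$.
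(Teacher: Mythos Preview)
The paper does not supply its own proof of this lemma: it is quoted verbatim as Bhargava's \cite[Theorem~3.3]{1402.0031} and used as a black box, so there is no in-paper argument to compare against. Your sketch follows the correct Ekedahl--Bhargava strategy, and both the medium range $z<p\le B$ and the truly large range $p>CB^{D}$ are handled correctly.

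The genuine gap is the intermediate window $B<p\le CB^{D}$ when $D=\max_i\deg f_i>1$. You propose to recycle the Lang--Weil bound together with the tail $\sum_{B<p\le CB^{D}}p^{-k}\ll B^{1-k}/\log B$. But once $p>B$ the per-prime estimate $\#Y(\F_p)\bigl(B^{n}/p^{n}+1\bigr)$ is dominated by the additive $+1$ (each residue class has $O(1)$ representatives in $BU$), so the contribution of a single prime is $\ll\#Y(\F_p)\ll p^{\,n-k}$, not $B^{n}p^{-k}$. Summing $p^{\,n-k}$ over $B<p\le CB^{D}$ gives a term of order $B^{D(n-k+1)}/\log B$, which for $D>1$ and $k<n$ swamps the target $B^{n-k+1}$. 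The $\sum p^{-k}$ you invoke is only the now-negligible first piece of the per-prime bound; choosing a larger cutoff $M=CB^{D}$ does not help either, since then the medium-range error $\sum_{p\le M}p^{\,n-k}$ blows up in the same way.

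Bhargava's actual treatment of the tail $p>B$ abandons the union bound over primes. Using $\operatorname{codim}Y\ge 2$ one selects coprime $f,g\in I(Y)$, forms the resultant $R=\operatorname{Res}_{x_1}(f,g)\in\Z[x_2,\dots,x_n]$, and notes that $p\mid f(x)$ and $p\mid g(x)$ force $p\mid R(x_2,\dots,x_n)$. Either $R(x_2,\dots,x_n)=0$, a hypersurface condition in $n-1$ variables, or $R(x_2,\dots,x_n)\ne 0$ has $O_Y(1)$ prime factors exceeding $B$, each of which pins $x_1$ to $O_Y(1)$ values in $[-B,B]$; iterating this elimination yields the $O(B^{n-k+1})$ bound for the large-prime contribution. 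This step, rather than a second application of Lang--Weil, is what your sketch is missing.
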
 
It has been   applied to  problems of positive density, e.g. square-free values of polynomials (Poonen~\cite{MR1980998})
or  solubility of families of Diophantine equations in many variables (Poonen--Voloch~\cite{MR2029869}).
 
Linnik's large sieve~\cite[\S 4]{MR2426239} gives upper bounds in  problems     of zero density.
Let us recall the  higher-dimensional  version   by Serre~\cite{MR1075658}:\begin{lemma}[Large sieve] \label{lemser}
Let  $\Omega$ be a subset of $\mathbb \Z^n$ that is contained in a cube of side $B\geq 1 $.
Let $m$ be a strictly positive integer. For a prime $\ell $ define  $\omega(\ell)$ by  $\#\Omega(\Z/\ell^m \Z)=\ell^{nm}(1-\omega(\ell) ) $. Then 
$$ \#\Omega \leq (2B)^n/L(B^{1/{2m}}),$$ where $L(z)=\sum_{q\leq z } \mu(q)^2 \prod_{\ell\mid q } \omega(\ell)/(1-\omega(\ell) )$.
\end{lemma}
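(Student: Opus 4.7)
The lemma is the standard Serre form of the higher-dimensional large sieve, and the plan is to adapt the classical one-dimensional Montgomery--Vaughan duality argument to $n$ variables; the proof proceeds in three steps, with a clear technical obstruction in the middle one.

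\emph{Step 1: the analytic upper bound.} Set $N:=\#\Omega$ and $Q:=B^{1/(2m)}$, and form the exponential sum $S(\boldsymbol{\alpha}):=\sum_{\mathbf{x}\in\Omega} e(\boldsymbol{\alpha}\cdot\mathbf{x})$ on $(\mathbb{R}/\mathbb{Z})^n$. Distinct Farey fractions $\mathbf{a}/q^m$ with $q\leq Q$ squarefree and $\mathbf{a}\in(\mathbb{Z}/q^m\mathbb{Z})^n$ primitive (i.e.\ $\gcd(\gcd(\mathbf{a}),q)=1$) are $Q^{-2m}=B^{-1}$-spaced coordinatewise. Applying the $n$-dimensional analytic large sieve (obtained from the one-variable Montgomery--Vaughan inequality by an iterated Gallagher--Huxley-type argument) to $\Omega$, which sits in a cube of side $B$, yields
\[
Z\;:=\;\sum_{q\leq Q}\mu(q)^2\sideset{}{^*}\sum_{\mathbf{a}\bmod q^m}|S(\mathbf{a}/q^m)|^2\;\leq\;(2B)^n\,N.
\]

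\emph{Step 2: the arithmetic lower bound.} For squarefree $q$, the Chinese Remainder Theorem and the local hypothesis $\#\Omega(\mathbb{Z}/\ell^m)=\ell^{nm}(1-\omega(\ell))$ give $\#\Omega(\mathbb{Z}/q^m)\leq q^{nm}\prod_{\ell\mid q}(1-\omega(\ell))$. Character orthogonality on $(\mathbb{Z}/q^m\mathbb{Z})^n$, together with Cauchy--Schwarz applied to the fibres of $\Omega\to(\mathbb{Z}/q^m\mathbb{Z})^n$, produces
\[
\sum_{\mathbf{a}\bmod q^m}|S(\mathbf{a}/q^m)|^2 \;=\; q^{nm}\sum_{\mathbf{y}}\bigl(\#\{\mathbf{x}\in\Omega:\mathbf{x}\equiv\mathbf{y}\bmod q^m\}\bigr)^2 \;\geq\;\frac{N^2}{\prod_{\ell\mid q}(1-\omega(\ell))}.
\]
Möbius inversion over the divisors of $q$ isolates the primitive contribution; combined with the multiplicative identity $(1-\omega(\ell))^{-1}-1=\omega(\ell)/(1-\omega(\ell))$ at each prime divisor, this gives
\[
\mu(q)^2\sideset{}{^*}\sum_{\mathbf{a}\bmod q^m}|S(\mathbf{a}/q^m)|^2 \;\geq\; N^2\,\mu(q)^2\prod_{\ell\mid q}\frac{\omega(\ell)}{1-\omega(\ell)}.
\]
Summing over $q\leq Q$ yields $Z\geq N^2\,L(Q)$.

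\emph{Step 3: combining.} Dividing the two bounds gives $N^2 L(Q)\leq(2B)^n N$, hence $N\leq(2B)^n/L(B^{1/(2m)})$, which is the claim.

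\emph{Main obstacle.} The delicate step is Step 2: Cauchy--Schwarz naturally produces the product $\prod_{\ell\mid q}(1-\omega(\ell))^{-1}$, whereas the conclusion requires the sifting density $\prod_{\ell\mid q}\omega(\ell)/(1-\omega(\ell))$. Converting one into the other demands careful bookkeeping, because a non-primitive vector $\mathbf{a}$ at modulus $q^m$ corresponds to a primitive vector $\mathbf{b}$ at a strictly smaller squarefree modulus $(q/d)^m$, at which we have only the weaker information $\#\Omega(\mathbb{Z}/\ell^{m-1})\leq\ell^{(m-1)n}$ (no $(1-\omega(\ell))$ saving). The inclusion-exclusion over divisors of $q$ must therefore be arranged so that the cancellations among terms with different $d$ still deliver the full multiplicative factor $\prod\omega(\ell)/(1-\omega(\ell))$ without losing any local gain.
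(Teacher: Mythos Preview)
The paper gives no proof of this lemma: it is quoted as the higher-dimensional large sieve of Serre~\cite{MR1075658} (see also Kowalski~\cite[\S4]{MR2426239}) and used as a black box. So there is nothing in the paper to compare against; your outline is the standard Montgomery/Gallagher argument and is the right one in spirit.

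That said, your ``Main obstacle'' paragraph points at a real difficulty but misdiagnoses it. The notion of primitivity you chose in Step~1, namely $\gcd(\gcd(\mathbf a),q)=1$, is too fine: with that choice the imprimitive $\mathbf a$ do indeed descend to denominators like $q^m/\ell$, which are not of the shape $(q')^m$, and the inclusion--exclusion breaks. The correct notion for prime-power level is coarser: call $\mathbf a\bmod q^m$ primitive when $\ell^m\nmid\gcd(\mathbf a)$ for every prime $\ell\mid q$. These Farey points are still pairwise distinct for different squarefree $q$ and are $Q^{-2m}$-spaced, and now the imprimitive $\mathbf a$ descend cleanly to $(q/\ell)^m$, so no intermediate level $\ell^{m-1}$ ever appears.

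Even with the right primitivity, your proposed route ``Cauchy--Schwarz then M\"obius'' still fails, because you would be substituting an inequality into a signed sum. The clean way to obtain the lower bound is the dual test-function argument: set $g_\ell(\mathbf y_\ell)=\mathbf 1_{\mathbf y_\ell\in\Omega(\mathbb Z/\ell^m)}-(1-\omega(\ell))$ and $g=\prod_{\ell\mid q}g_\ell$ via CRT. Then $\widehat g$ is supported on primitive frequencies, $g(\mathbf x\bmod q^m)=\prod_\ell\omega(\ell)$ for every $\mathbf x\in\Omega$, and Parseval gives $\sum_{\mathbf a}|\widehat g(\mathbf a)|^2=\prod_\ell\omega(\ell)(1-\omega(\ell))$. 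Cauchy--Schwarz applied to $N\prod_\ell\omega(\ell)=\sum_{\text{prim}}\widehat g(\mathbf a)S(\mathbf a/q^m)$ yields exactly
\[
\sideset{}{^*}\sum_{\mathbf a\bmod q^m}\bigl|S(\mathbf a/q^m)\bigr|^2 \;\ge\; N^2\prod_{\ell\mid q}\frac{\omega(\ell)}{1-\omega(\ell)},
\]
and your Steps~1 and~3 then finish the proof as written.
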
 We can now give the common generalisation of the two sieves: \begin{theorem}[The geometric-large sieve]
\label{thm:generalis} Keep the setting of   Lemmas~\ref{lembharg}-\ref{lemser} and assume that 
  $\limsup\limits_{p\to\infty} \omega(p) \neq 1 $. Then for all $B, z \geq 2 $ one has 
  \[\#\{\b x \in B U\cap \Omega: \b x \md p \in Y(\F_p) \textrm{ for some } p>z \}
\!=\!O\!\l(\frac{1}{z^{k-1} (\log z) }\frac{B^n}{L(B^{\frac{1}{4m}})}
+ B^{n-\frac{(k-1)}{4m} }  \r), \] where  the implied constant depends only on $m,U, Y$ and $\limsup \omega(p)$.
\end{theorem}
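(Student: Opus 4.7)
The plan is to combine Bhargava's geometric sieve (Lemma~\ref{lembharg}) with Serre's large sieve (Lemma~\ref{lemser}) via a dyadic split of the primes $p>z$ at the threshold $P:=B^{1/(4m)}$. For $p>P$ we argue purely geometrically using Lemma~\ref{lembharg}; for $z<p\leq P$ we apply the large sieve to the conditioned set $\Omega_p:=\{x\in\Omega:x\bmod p\in Y(\F_p)\}$ and exploit the sharper local density at $p$ to gain a factor $p^k$.

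Applying Lemma~\ref{lembharg} with $z$ replaced by $P$ gives for the large-prime contribution a bound $\ll B^n/(P^{k-1}\log P)+B^{n-k+1}$; since $k-1\geq (k-1)/(4m)$ for every integer $m\geq 1$, this collapses to $O(B^{n-(k-1)/(4m)})$, matching the second term of the theorem. For the small primes, Lang--Weil-type bounds give $\#Y(\F_p)\ll p^{n-k}$ for all but a fixed finite set of bad primes (absorbed into the implied constant), so $\#\Omega_p(\Z/p^m\Z)\leq \#Y(\F_p)\,p^{n(m-1)}\ll p^{nm-k}$. The local density $\omega_p$ attached to $\Omega_p$ therefore satisfies $\omega_p(\ell)=\omega(\ell)$ for $\ell\neq p$ and $1-\omega_p(p)\ll p^{-k}$. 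Writing $L^{(p)}$ for the sum defining $L$ restricted to moduli coprime to $p$ and expanding $L_p$ according to whether $p$ divides the modulus,
\[
L_p\bigl(B^{1/(2m)}\bigr)\geq \frac{\omega_p(p)}{1-\omega_p(p)}\,L^{(p)}\bigl(B^{1/(2m)}/p\bigr)\gg p^k\,L\bigl(B^{1/(4m)}\bigr),
\]
using $B^{1/(2m)}/p\geq B^{1/(4m)}$ for $p\leq P$ together with the hypothesis $\limsup\omega(p)<1$, which guarantees $L^{(p)}(z)\asymp L(z)$ uniformly in $p$. Lemma~\ref{lemser} applied to $\Omega_p$ at level $B^{1/(2m)}$ then yields $\#(BU\cap\Omega_p)\ll B^n/(p^k L(B^{1/(4m)}))$, and summing over $z<p\leq P$ via $\sum_{p>z}p^{-k}\ll 1/(z^{k-1}\log z)$ (trivial for $k=1$; Mertens/PNT for $k\geq 2$) produces the first term of the theorem.

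The main obstacle is the interplay of exponents in the display above: Lemma~\ref{lemser} naturally runs at level $B^{1/(2m)}$, but after factoring out a prime $p$ of size up to $P=B^{1/(4m)}$ the remaining sieve level drops only to $B^{1/(4m)}$, which is precisely why the final bound features $L(B^{1/(4m)})$ rather than $L(B^{1/(2m)})$. Equally delicate is the comparison $L^{(p)}(z)\asymp L(z)$ uniformly in $p$: without the hypothesis $\limsup\omega(p)\neq 1$ the ratio $\omega(p)/(1-\omega(p))$ could be unbounded and the $p^k$ arithmetic gain would be devoured. All other steps are routine bookkeeping.
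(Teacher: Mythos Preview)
Your proof is correct and follows essentially the same strategy as the paper: split the primes at $P=B^{1/(4m)}$, use the geometric sieve (Lemma~\ref{lembharg}) for $p>P$, and for $z<p\leq P$ apply the large sieve to $\Omega_p$ with the sharpened local density $1-\omega_p(p)\ll p^{-k}$ coming from Lang--Weil, then invoke $\limsup\omega(p)<1$ to compare $L^{(p)}$ with $L$. The paper packages the small-prime step as a standalone lemma (Lemma~\ref{corollarge}) and a general intermediate result (Theorem~\ref{thm:sum}) before specializing, but the underlying argument is identical to yours.
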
 This gives a saving compared to the bounds that any of the individual sieves provide.
Furthermore, the original sieves can be recovered by taking either  $\Omega=\Z^n$ or $z=1$.
\begin{remark}[Necessity of assumptions] The case $\Omega=Y$ shows that extra 
assumptions are necessary.
This is why we added  the   assumption  $\limsup \omega(p) \neq 1 $, which prevents the case $\Omega=Y$.
Indeed,    if $\Omega (\F_p)\subseteq Y(\F_p)$ for infinitely many primes then  $1-\omega(p) =O(p^{-k})$ by 
  the Lang--Weil estimates~\cite{LW54},  hence, the assumption $\lim \sup \omega(p)\neq  1$.
In cases where $\lim \sup \omega(p)= 1$, the large sieve alone typically provides a satisfying upper bound. \end{remark}
\begin{remark}[A no-assumptions version] We later give a version with no assumptions in 
Theorem~\ref{thm:sum}.
Friendly versions of Theorem~\ref{thm:sum} are given in Corollaries~\ref{cor:asympt}-\ref{cor:pk}.
  Theorem~\ref{thm:generalis} is a special case of any of these results, however,
it is easy to use and it suffices in  most     cases. \end{remark}
\subsection{$W$-trick for local soluble equations}\label{s:probloc}
The problem of estimating the probability that a variety is everywhere locally soluble has recently attracted much attention,
see the table in the introduction of~\cite{conconcon} for some of the results. As there is no uniform treatment for all cases of this question 
it is desirable to have      a general framework for this type of question. We propose here a variant of the Green--Tao
$W$-trick  which consists of the following three steps:
\begin{enumerate}\item \textbf{Simplify}: 
use the   geometric-large sieve from Theorem~\ref{thm:generalis} to  ensure that $100\%$ of everywhere locally soluble varieties 
have ``simple'' coefficients, i.e. square-free, coprime, e.t.c., with respect to all primes $p>z$ for some $z\to\infty$,
\item  \textbf{Divide}: partition the coefficients of the everywhere locally soluble varieties in arithmetic progressions modulo a 
multiple $W$ of all primes $p\leq z $,
\item  \textbf{Rule}: use the fact that the coefficients  of the remaining varieties are arithmetically simple
to prove asymptotics for the number of everywhere 
locally soluble varieties in each arithmetic progression modulo $W$.  
\end{enumerate} The third step is   a Siegel--Walfisz type of question for the equidistribution of everywhere locally soluble varieties in arithmetic progressions. To control error terms it is desirable to work only with small moduli $W$, which can be ensured by
  taking $z=z(B)$  tend to 
infinity very slowly; this does not cause problems owing to    Theorem~\ref{thm:generalis}. We will illustrate these steps in our proof of 
Theorem~\ref{thm:analytic}. \begin{remark}A  convenient side  of  this  approach is that the leading constant automatically
 comes   factored as an Euler product 
since the asymptotic contribution of each progression in the last step seems to be independent of the progression.
\end{remark}
\begin{acknowledgements}
Sections of this paper were written while the authors were visiting the Max Planck Insitute for Mathematics in Bonn and Carla and Marco’s place in Anacapri. We are deeply grateful to them for the warm hospitality, to the former also for financial backing and to the latter for the environment replete with fresh caciotta.
We thank Tim Browning and Dan Loughran for useful comments on an earlier version of this paper.
\end{acknowledgements}

\section{The geometric-large sieve}

We prove 
Theorem~\ref{thm:generalis} by
replacing the treatment of the small primes in Bhargava's proof of~\cite[Theorem 3.3]{1402.0031}
by arguments   from the  large sieve. We start with the following lemma:
\begin{lemma}\label{corollarge}
Keep the setting of Lemma~\ref{lemser}.
Assume that
  for every prime $p\leq B^{1/4m}$
we are given a set $S_{p} \subseteq (\Z/p^m \Z)^n $.
Then the number of $\b x \in \Omega\cap \Z^n$ for which there exists a prime 
$p\in (z, B^{1/4m} ]$  
with $ \b x \md{p^m} \in S_{p}$ is at most 
\[ 
\l(
\sum_{p\in (z, B^{1/4m} ]} \frac{  \#\{S_{p} \cap \Omega(\Z/p^m \Z)\} }  {  \#\Omega(\Z/p^m \Z) }
\r)
\frac{(2B)^n}{ L(B^{1/4m}) }
.\]

\end{lemma}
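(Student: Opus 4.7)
The plan is to apply the large sieve of Lemma~\ref{lemser} to each set
\[ T_p := \{\b x \in \Omega \cap BU : \b x \bmod{p^m} \in S_p\}, \qquad p\in (z,B^{1/4m}], \]
separately, to bound the quantity we are after by $\sum_p \#T_p$ via the union bound, and to show that each $\#T_p$ is controlled by $\rho_p (2B)^n/L(B^{1/4m})$, where
$\rho_p := \#\{S_p\cap\Omega(\Z/p^m\Z)\}/\#\Omega(\Z/p^m\Z)$. Summing over $p$ then produces the claim.

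Fix a prime $p$ in the range. Since $T_p\subseteq\Omega\cap BU$ sits inside the same box as $\Omega$, Lemma~\ref{lemser} applies to $T_p$ with sieve function $\omega_p$ agreeing with $\omega$ at every prime $\ell\neq p$ (the only additional constraint imposed on $T_p$ is at the prime $p$), and at $\ell=p$ satisfying
\[ 1-\omega_p(p) \;\leq\; \#\{S_p\cap\Omega(\Z/p^m\Z)\}/p^{nm} \;=\; \rho_p\,(1-\omega(p)). \]
This yields $\#T_p\leq (2B)^n/L_p(B^{1/2m})$, where $L_p$ is built from $\omega_p$ by the formula in Lemma~\ref{lemser}.

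To extract the factor $1/\rho_p$ from $L_p(B^{1/2m})$, I would discard every squarefree modulus $q\leq B^{1/2m}$ not divisible by $p$. Writing the remaining terms with $q=pq'$ and $(q',p)=1$ gives
\[ L_p(B^{1/2m})\;\geq\;\frac{\omega_p(p)}{1-\omega_p(p)}\,L^{(p)}(B^{1/2m}/p), \]
where $L^{(p)}$ restricts the sum defining $L$ to moduli coprime to $p$. The hypothesis $p\leq B^{1/4m}$ forces $B^{1/2m}/p\geq B^{1/4m}$; meanwhile the identity $L(Q)=L^{(p)}(Q)+\tfrac{\omega(p)}{1-\omega(p)}L^{(p)}(Q/p)$ together with the monotonicity of $L^{(p)}$ yields $L^{(p)}(Q)\geq(1-\omega(p))L(Q)$. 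Inserting the upper bound on $1-\omega_p(p)$ into the factor $\omega_p(p)/(1-\omega_p(p))$ then produces a lower bound of the form $L_p(B^{1/2m})\gg L(B^{1/4m})/\rho_p$, so that $\#T_p\ll \rho_p(2B)^n/L(B^{1/4m})$.

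The main technical hurdle is the bookkeeping needed to recover the clean constant $1$ claimed in the statement. This is handled by a regime split: when $\rho_p\geq \tfrac12$ the trivial bound $\#T_p\leq\#\Omega\leq (2B)^n/L(B^{1/2m})\leq (2B)^n/L(B^{1/4m})\leq 2\rho_p(2B)^n/L(B^{1/4m})$ already suffices, and when $\rho_p<\tfrac12$ the extraction above is comfortably clean since $1-\rho_p(1-\omega(p))\geq \tfrac12$. Summing the resulting per-prime bound over $p\in(z,B^{1/4m}]$ concludes.
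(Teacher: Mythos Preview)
Your overall strategy (union bound, then apply the large sieve to each $T_p$ with the modified density $\omega_p$, then extract a factor $1/\rho_p$ from the resulting sieve sum) is exactly the paper's. The one substantive difference is the step where you bound $L_p(B^{1/2m})$ from below: you \emph{discard} all moduli $q$ with $p\nmid q$, keeping only the piece $\frac{\omega_p(p)}{1-\omega_p(p)}L^{(p)}(B^{1/2m}/p)$. This is what costs you the clean constant. Indeed, your regime split does \emph{not} recover the constant $1$ you claim: in both regimes you only reach $\#T_p\le 2\rho_p\,(2B)^n/L(B^{1/4m})$ (for $\rho_p<\tfrac12$ you get $\frac{1}{\rho_p}-(1-\omega(p))\ge \frac{1}{\rho_p}-1>\frac{1}{2\rho_p}$, and for $\rho_p\ge\tfrac12$ your trivial bound is $\le 2\rho_p\cdot(2B)^n/L(B^{1/4m})$). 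So as written you prove the lemma only up to a factor of $2$.

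The fix is to keep the term you threw away, which is precisely what the paper does. With your notation,
\[
L_p(B^{1/2m})=L^{(p)}(B^{1/2m})+\frac{\omega_p(p)}{1-\omega_p(p)}\,L^{(p)}(B^{1/2m}/p)
\;\ge\;\Bigl(1+\frac{\omega_p(p)}{1-\omega_p(p)}\Bigr)L^{(p)}(B^{1/4m})
=\frac{L^{(p)}(B^{1/4m})}{1-\omega_p(p)},
\]
since $B^{1/2m}/p\ge B^{1/4m}$ and $L^{(p)}$ is monotone. Now combine $1-\omega_p(p)=\rho_p\,(1-\omega(p))$ with your own inequality $L^{(p)}(B^{1/4m})\ge (1-\omega(p))\,L(B^{1/4m})$ to get $L_p(B^{1/2m})\ge L(B^{1/4m})/\rho_p$ exactly, and hence $\#T_p\le \rho_p\,(2B)^n/L(B^{1/4m})$ with constant $1$. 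No case split is needed.
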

\begin{proof} By the union bound 
we get 
\[\leq \sum_{p\in (z, B^{1/4m} ]}
\#\{\b x \in \Omega\cap \Z^n :\b x \md{p^m} \in S_{p } \}
.\]
Now we use Lemma~\ref{lemser} with 
$\omega_p(\ell):= \omega(\ell)$ for all $\ell\neq p $ and with 
\[p^{nm}(1-\omega_p(p) ):= \#\{S_{p} \cap \Omega(\Z/p^m \Z)\} .\]
We obtain
\[
  \sum_{p\in (z, B^{1/4m} ]}
\#\{\b x \in \Omega\cap \Z^n :\b x \md{p^m} \in S_{p } \}
\leq (2B)^n  
\sum_{p\in (z, B^{1/4m} ]} \frac{1}{M_p(B^{1/2m})}
\] where 
\[
M_p(t)=\sum_{q\leq t } \mu^2(q) \prod_{\ell\mid q } \frac{\omega_p(\ell) }{1-\omega_p(\ell) } .\]
Let 
\[g(q)=\mu^2(q) \prod_{\ell\mid q } \frac{\omega (\ell) }{1-\omega (\ell) } 
,
L_p(t)=\sum_{\substack{ q\leq t \\ p\nmid q  }} g(q) 
\]
so that for $p\leq \sqrt t $ one has 
\[M_p(t)=
L_p(t)+\frac{p^{nm}-  \#\{S_{p} \cap \Omega(\Z/p^m \Z)\} }{  \#\{S_{p} \cap \Omega(\Z/p^m \Z)\} } L_p(t/p)
\geq 
\frac{p^{nm} }{  \#\{S_{p} \cap \Omega(\Z/p^m \Z)\} } L_p(\sqrt t ),
\]
where we used $\min\{L_p(t), L_p(t/p)\}\geq L_p(\sqrt t ) $ that is implied by $t/p \geq \sqrt t $.
 Note that we also have 
\[
L(\sqrt t) =L_p(\sqrt t) + \frac{\omega(p) }{1-\omega(p) } L_p(\sqrt t/p) 
\leq L_p(\sqrt t) + \frac{\omega(p) }{1-\omega(p) } L_p(\sqrt t)=\frac{L_p(\sqrt t)}{1-\omega(p)} 
 \]hence \[M_p(t)\geq \frac{p^{nm} }{  \#\{S_{p} \cap\Omega(\Z/p^m \Z)\} } (1-\omega(p))
L(\sqrt t )=L(\sqrt t )
\frac{  \# \Omega(\Z/p^m \Z) }{  \#\{S_{p} \cap \Omega(\Z/p^m \Z)\} }  
.\]This is sufficient.
\end{proof}The following result is our most general combination of the geometric and the large sieves. It makes no assumptions on $\omega(p)$. 
\begin{theorem}\label{thm:sum}Keep the setting of 
 Lemmas~\ref{lembharg}-\ref{lemser} and define 
$$ \c E= \sum_{p\in (z, B^{1/4m} ]} \frac{  \#\{Y(\Z/p^m \Z) \cap \Omega(\Z/p^m \Z) \} }  {  \#\Omega(\Z/p^m \Z) }
.$$ Then for all $B,z\geq 2 $
we have  \[\#\{\b x \in B U\cap \Omega: \b x \md p \in Y(\F_p) \textrm{ for some prime } p>z \}=O\l(
\c E\frac{B^n}{L(B^{1/4m})}+ B^{n-\frac{(k-1)}{4m} }  \r), \] where  the implied constant depends only on $U$ and $Y$.
\end{theorem}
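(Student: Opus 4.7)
The proof plan is to split the catching primes $p>z$ at the threshold $B^{1/4m}$ and apply a different sieve in each range. Concretely, write
$$\#\{\b x\in BU\cap\Omega:\b x\bmod p\in Y(\F_p)\text{ for some }p>z\}\le S_1+S_2,$$
where $S_1$ collects the $\b x$ caught by some prime $p\in(z,B^{1/4m}]$ and $S_2$ collects the $\b x$ caught by some $p>B^{1/4m}$. The two summands of the claimed bound correspond exactly to these two ranges.

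For $S_1$ the natural move is to apply Lemma~\ref{corollarge} with the family of sets $S_p:=Y(\Z/p^m\Z)\subseteq(\Z/p^m\Z)^n$. Since the reduction map $Y(\Z/p^m\Z)\to Y(\F_p)$ captures the condition $\b x\bmod p\in Y(\F_p)$ (any $\b x$ with $\b x\bmod p\in Y(\F_p)$ has $\b x\bmod p^m\in Y(\Z/p^m\Z)$), Lemma~\ref{corollarge} immediately yields $S_1\le \c E\cdot(2B)^n/L(B^{1/4m})$, which gives the first term after absorbing $2^n$ into the implied constant.

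For $S_2$ I would reuse the large-prime part of Bhargava's proof of Lemma~\ref{lembharg}, i.e.\ the argument from \cite{1402.0031}: a generic-hyperplane slicing of $Y\subseteq\mathbb A^n_\Z$ together with the Lang--Weil estimates bounds the number of $\b x\in BU\cap\Z^n$ such that some prime $p>B^{1/4m}$ satisfies $\b x\bmod p\in Y(\F_p)$. This step uses only the codimension-$k$ geometry of $Y$ and the measure of $U$, and is independent of $\Omega$, so the trivial inclusion $\Omega\subseteq\Z^n$ is enough. Run with cutoff $B^{1/4m}$, the tail estimate produces $S_2=O(B^{n-(k-1)/4m})$, matching the second term.

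The conceptual content is entirely in the two inputs; the one step where genuine care is needed is rechecking Bhargava's large-prime bookkeeping with the non-standard cutoff $B^{1/4m}$ (in place of his usual threshold, often $B^{1/k}$) and verifying that the exponent comes out to precisely $n-(k-1)/4m$. I expect this to be a routine but slightly delicate computation rather than a conceptual obstacle. Summing $S_1$ and $S_2$ then gives the theorem with an implied constant depending only on $U$ and $Y$, since both inputs enjoy this dependence.
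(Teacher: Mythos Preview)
Your proposal is correct and follows exactly the paper's approach: split at $B^{1/4m}$, handle $S_1$ via Lemma~\ref{corollarge} with $S_p=Y(\Z/p^m\Z)$, and handle $S_2$ via the geometric sieve. The only simplification you are missing is that no ``rechecking of Bhargava's bookkeeping'' is needed for $S_2$: Lemma~\ref{lembharg} is stated for an arbitrary cutoff $z$, so applying it directly with $z=B^{1/4m}$ (and $\Omega$ replaced by $\Z^n$) gives $O\bigl(B^{n-(k-1)/4m}/\log B + B^{n-k+1}\bigr)\ll B^{n-(k-1)/4m}$ once one observes that $k\ge 2$ may be assumed (the bound is trivial for $k=1$).
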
 \begin{proof}  We can clearly assume that $k\geq 2 $ since otherwise the second error term dominates.
By Lemma~\ref{lemser} we infer \[ \#\{\b x \in B U\cap \Omega: \b x \md p \in Y(\F_p) \textrm{ for some prime } p>B^{1/4m}  \}
= O\l(\frac{B^{n-\frac{(k-1)}{4m} }}{ \log B }+B^{n-k+1}\r), \]which is $\ll B^{n-\frac{(k-1)}{4m}}$. This is sufficient if $z\geq B^{1/4m} $. When 
$z< B^{1/4m} $   it suffices to prove that  \[\#\{\b x \in B U\cap \Omega: \b x \md p \in Y(\F_p) \textrm{ for some } p\in (z, B^{1/4m} ]  \} 
 =O\l( \c E \frac{B^n}{L(B^{1/4m})} + B^{n-\frac{(k-1)}{4m} }   \r). \] This follows directly by Lemma~\ref{corollarge}
with $S_p=Y(\Z/p^m\Z)$.  \end{proof}

If the sets $Y(\Z/p^m \Z) ,  \Omega(\Z/p^m \Z)$
are `independent'  for sufficiently many primes $p$,
our strategy always gives 
a big saving over the large sieve.
We make this precise in the next result:
\begin{corollary}
\label{cor:asympt}
Keep the setting of 
 Lemmas~\ref{lembharg}-\ref{lemser} and 
assume that 
$$\lim_{t\to\infty} \frac{L(t)}{t^{k-1}}=0 
\ \ \textrm{  and  } \ \ 
   \sum_{p\textrm{ prime } } \frac{  \#Y(\Z/p^m \Z) \cap \Omega(\Z/p^m \Z)  }  {  \# \Omega(\Z/p^m \Z)  }
<\infty .$$ Then for any function $\xi:[1,\infty)\to \R$ with $\lim_{t\to\infty}\xi(t) =+\infty $ we have 
\[
\lim_{B\to\infty }
\frac{\#\{\b x \in B U\cap \Omega: \b x \md p \in Y(\F_p) \textrm{ for some prime } p>\xi(B) \}}{
B^n/L(B^{1/4m})}
=0
 .\]
\end{corollary}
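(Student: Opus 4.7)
The plan is to invoke Theorem~\ref{thm:sum} directly with $z = \xi(B)$ and verify that both error terms it produces are $o(B^n/L(B^{1/4m}))$ under the two stated hypotheses. Since Theorem~\ref{thm:sum} already packages both the geometric and large sieve contributions, no further sieving is needed; only asymptotic book-keeping remains.

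First, I would apply Theorem~\ref{thm:sum} to the given data $U,Y,\Omega$ with the choice $z := \xi(B)$. This delivers the bound
$$ \#\{\b x \in BU\cap\Omega : \b x \md p \in Y(\F_p) \text{ for some } p > \xi(B)\} = O\!\left( \c E(B)\, \frac{B^n}{L(B^{1/4m})} + B^{\,n-(k-1)/4m} \right), $$
where
$$ \c E(B) = \sum_{p \in (\xi(B),\, B^{1/4m}]} \frac{\#\{Y(\Z/p^m\Z) \cap \Omega(\Z/p^m\Z)\}}{\#\Omega(\Z/p^m\Z)}. $$
(If $\xi(B) \geq B^{1/4m}$ the sum $\c E(B)$ is empty and only the geometric-sieve contribution survives, handled identically below.)

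Next, I would divide through by $B^n/L(B^{1/4m})$ and check that each term tends to $0$. For the main term, $\c E(B)$ is a tail, starting at $\xi(B)$, of the convergent series $\sum_p \#\{Y(\Z/p^m\Z)\cap\Omega(\Z/p^m\Z)\}/\#\Omega(\Z/p^m\Z)$ posited in the second hypothesis; because $\xi(B)\to\infty$, Cauchy's criterion forces $\c E(B)\to 0$. For the secondary term, the relevant ratio is
$$ \frac{B^{\,n-(k-1)/4m}}{B^n/L(B^{1/4m})} = \frac{L(B^{1/4m})}{B^{(k-1)/4m}}, $$
which, setting $t = B^{1/4m}$, equals $L(t)/t^{k-1}$ and hence tends to $0$ by the first hypothesis.

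There is no genuine obstacle: the corollary is a packaging statement explaining how, under two mild assumptions, the bound of Theorem~\ref{thm:sum} improves on the pure large-sieve threshold $B^n/L(B^{1/4m})$. The only mildly delicate point is ensuring that the geometric-sieve residue $B^{\,n-(k-1)/4m}$ is $o(B^n/L(B^{1/4m}))$; this is precisely the role of the first hypothesis $L(t)/t^{k-1}\to 0$, and without it the statement would fail in general.
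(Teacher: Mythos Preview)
Your proof is correct and follows essentially the same approach as the paper: apply Theorem~\ref{thm:sum} with $z=\xi(B)$, divide by $B^n/L(B^{1/4m})$, and observe that the two resulting terms vanish by the two hypotheses. The paper's proof is a two-line version of exactly this argument.
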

\begin{proof}By Theorem~\ref{thm:sum} the quotient in the corollary is 
\[\ll \frac{L(B^{1/4m})}{B^{(k-1)/4m} }+\sum_{p>\xi(B) } \frac{  \#Y(\Z/p^m \Z) \cap \Omega(\Z/p^m \Z)  }  {  \# \Omega(\Z/p^m \Z) }
.\] Our assumptions ensure that both terms vanish as $B\to\infty$.
\end{proof}
Next, we give
a
version of Theorem~\ref{thm:sum}
that is easier to use. It briefly states that if 
$\Omega(\Z/p^m \Z)$ is reasonably large, one always gets a substantial saving over the large sieve.
 \begin{corollary}
\label{cor:pk}
Keep the setting of 
 Lemmas~\ref{lembharg}-\ref{lemser} and 
assume that 
$$\liminf_{p\to\infty} \frac{ \# \Omega(\Z/p^m \Z)  }{p^{nm-k+1}\log p}\neq 0  .$$ 
Then for any $B,z\geq 2 $ we have 
\[
\#\{\b x \in B U\cap \Omega: \b x \md p \in Y(\F_p) \textrm{ for some prime } p>z \}
\ll
\frac{1}{(\log z )}
\frac{B^n}{L(B^{1/4m})}
+
B^{n-\frac{(k-1)}{4m}}
,\] where the implied constant depends only on $U,Y$ and  the value of $\liminf$.
\end{corollary}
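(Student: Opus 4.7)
The plan is to apply Theorem~\ref{thm:sum} directly and reduce everything to estimating the sum $\mathcal{E}$ defined there. Writing $\kappa := \liminf_{p\to\infty} \#\Omega(\Z/p^m\Z)/(p^{nm-k+1}\log p)$, the hypothesis gives a constant $p_0$ (depending on $\kappa$) such that $\#\Omega(\Z/p^m\Z) \geq (\kappa/2)\, p^{nm-k+1}\log p$ for every $p > p_0$. After possibly adjusting the implied constant to absorb the finitely many primes $p \leq p_0$, we may assume this lower bound holds throughout the range of summation in $\mathcal{E}$.

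Next I would bound $\#Y(\Z/p^m\Z)$ from above. Since $Y \subset \mathbb{A}^n_\Z$ is a closed subscheme of codimension $k$, standard Lang--Weil type estimates together with a straightforward Hensel/stratification argument give the uniform bound
$$\# Y(\Z/p^m\Z) \ll p^{m(n-k)},$$
with the implied constant depending only on $Y$. (This is precisely the estimate that underlies Bhargava's proof of the geometric sieve in Lemma~\ref{lembharg}, so it is consistent with the framework.) Combining the two bounds term by term gives
$$\frac{\#\bigl(Y(\Z/p^m\Z)\cap \Omega(\Z/p^m\Z)\bigr)}{\#\Omega(\Z/p^m\Z)} \;\leq\; \frac{\# Y(\Z/p^m\Z)}{\#\Omega(\Z/p^m\Z)} \;\ll\; \frac{p^{m(n-k)}}{p^{nm-k+1}\log p} \;=\; \frac{1}{p^{(m-1)k+1}\log p} \;\leq\; \frac{1}{p \log p}.$$

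Summing over primes $p > z$ and using the standard Mertens-type estimate
$$\sum_{p>z}\frac{1}{p\log p} \;\ll\; \frac{1}{\log z}$$
(which follows from partial summation on the prime counting function), one obtains $\mathcal{E} \ll 1/\log z$, and plugging this into Theorem~\ref{thm:sum} yields precisely the stated bound. The main (and really only) content beyond invoking Theorem~\ref{thm:sum} is the uniform-in-$p$ upper bound $\#Y(\Z/p^m\Z) \ll p^{m(n-k)}$; this is the sort of estimate that can fail to be truly uniform if $Y$ is very singular or reducible, so one has to be a bit careful and perhaps pass to a stratification of $Y$ by codimension, but in any event the constant is allowed to depend on $Y$, which is exactly what the statement permits.
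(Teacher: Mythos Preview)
Your argument follows the paper's proof essentially verbatim: invoke Theorem~\ref{thm:sum}, bound the numerator of each term in $\c E$ by $\#Y(\Z/p^m\Z)$ via Lang--Weil, bound the denominator from below using the $\liminf$ hypothesis, and then sum $\sum_{p>z}1/(p\log p)\ll 1/\log z$.

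One small correction worth noting: in the context of Theorem~\ref{thm:sum}, the set ``$Y(\Z/p^m\Z)$'' appearing in $\c E$ is the preimage of $Y(\F_p)$ under reduction $(\Z/p^m\Z)^n\to\F_p^n$ (this is how the proof of Theorem~\ref{thm:sum} applies Lemma~\ref{corollarge} with $S_p=Y(\Z/p^m\Z)$ to count points with $\b x\bmod p\in Y(\F_p)$). Hence its cardinality is exactly $p^{n(m-1)}\#Y(\F_p)\ll p^{nm-k}$, not the stronger $p^{m(n-k)}$ you claim from a Hensel argument. With the correct bound the ratio is $\ll 1/(p\log p)$ on the nose, so your subsequent weakening to $1/(p\log p)$ is harmless and the conclusion stands, but the intermediate estimate $\#Y(\Z/p^m\Z)\ll p^{m(n-k)}$ is not valid for this set when $m\geq 2$.
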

\begin{proof}
By     the Lang--Weil estimates~\cite{LW54}
 we have 
\beq{eq:haswl}{\#Y(\Z/p^m \Z) \cap \Omega(\Z/p^m \Z)  \leq \#Y(\Z/p^m \Z)  =p^{n(m-1)}\#Y(\F_p) 
\ll p^{mn-k}
.}
Combining this with our assumption, shows that there exists a positive constant $c$ such that 
$$\# \Omega(\Z/p^m \Z)  
\geq  c  p \log p
\#Y(\Z/p^m \Z) \cap \Omega(\Z/p^m \Z)  
 ,
$$ hence, $\c E\leq c 
  \sum_{p>z} \frac{1}{p\log p}=O_c(1/ \log z )
$,  by the Prime Number Theorem. 
\end{proof}

\subsection{Proof of Theorem~\ref{thm:generalis}}
By~\eqref{eq:haswl}
we see that the quantity $\c E $ in Theorem~\ref{thm:sum} is 
\[
\c E\ll \sum_{p>z} \frac{1}{p^{-nm+k} \#\Omega(\Z/p^m\Z)}
.\]
Using the assumption $\gamma=\limsup_{p\to\infty}\omega(p)$ is not $  1 $ 
we note that 
\[\#\Omega(\Z/p^m\Z) = p^{nm} (1-\omega(p) ) \gg_\gamma p^{nm}
\] for all sufficiently large primes $p$.   
 Therefore, $\c E\ll_{\gamma} \sum_{p>z}p^{-k}\ll z^{-k+1}(\log z)^{-1} $.
\qed

\subsection{Proof of Theorem~\ref{thm:loughrnsmt}}
We use the setting of~\cite[\S 4.2.3]{MR3568035}  where $\omega(p)$ is defined and is shown that $L(T) \gg (\log T)^{\Delta(f)}$.
The proof follows directly from Theorem~\ref{thm:generalis} since it is proved in~\cite[Lemma 3.3]{MR4269677} that $\omega(p)\ll 1/p$.

\section{Character sums}   \label{s:checkingsolubility} 
In this section we prove Theorem~\ref{thm:analytic}. Recall the three steps      in \S\ref{s:probloc}. The first step is in \S\ref{lemslargeprim}:
using  the geometric-large sieve we show that only pairs of integers 
that   jointly divisible by small powers of primes contribute. 
In the second step in \S\ref{s:periodicity} we use this information to partition into almost-primitive 
progressions modulo  some  integer  $W$ which  is divisible by all   primes below an arbitrary $z$ 
that grows slowly to infinity with $B$. In the last step in \S\ref{s:methodiwafrid}  we use the method of Friedlander--Iwaniec~\cite{MR2675875}
to prove an asymptotic inside each progression. The main term treatment and the final steps in the proof of the asymptotic are   in \S\ref{lem:joujou2}. 

Throughout this section we choose and fix   $a,b\in \{1,-1\}$, $\c P$ will be a  subset of the primes and   $$  S_{\c P}:=\{(s,t)\in \Z^2: as> 0,bt > 0, 
(s,t)_{\Q_p}=1 \ \text{for every prime }  p\in \c P\}.$$ \subsection{First step: simplify} 
\begin{lemma}\label{lemslargeprim}For      $B, z\geq 1$ the number of $(s,t)\in S_\c P\cap [-B,B]^2$ for which there exists a prime $\ell>z$ such that $\ell \mid (s,t)$ is$$ \ll  \frac{1}{z (\log z) } \frac{B^2}{(\log B)^{\varpi} } + B^{2-\frac{1}{8} }  ,$$ where the implied constant is independent of $B$ and $z$. 
\end{lemma}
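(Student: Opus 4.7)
The natural approach is to apply the geometric-large sieve (Theorem~\ref{thm:generalis}) with $\Omega=S_{\c P}$, the closed subscheme $Y=\{\mathbf 0\}\subset\mathbb A^2_\Z$ (so $n=k=2$), the cube $U=[-1,1]^2$, and $m=2$. With these choices the second error term of Theorem~\ref{thm:generalis} is $B^{n-(k-1)/4m}=B^{15/8}$, matching the claim, while the first term equals $\frac{1}{z\log z}\cdot\frac{B^2}{L(B^{1/8})}$. The lemma therefore reduces to establishing $L(B^{1/8})\gg(\log B)^{\varpi}$.

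For the sieve weights, I take $\omega(\ell)=0$ when $\ell\notin \c P$ or $\ell=2$ (ignoring the Hilbert condition at $2$ only enlarges the counted set and preserves any upper bound). For odd $\ell\in \c P$ the condition $(s,t)_{\Q_\ell}=1$ is detected at level $\ell^2$, so $\omega(\ell)=1-\#S_{\c P}(\Z/\ell^2\Z)/\ell^4$. Splitting by the valuations $v_\ell(s),v_\ell(t)$ and using the explicit formula for the tame Hilbert symbol, the insoluble contribution comes overwhelmingly from the pairs with $v_\ell(s)=0$, $v_\ell(t)=1$ and $s$ a non-residue (plus the symmetric case), and a direct calculation yields
\[
\omega(\ell)=\frac{1}{\ell}+O\l(\frac{1}{\ell^2}\r).
\]
In particular $\omega(\ell)/(1-\omega(\ell))\gg 1/\ell$ uniformly in odd $\ell\in \c P$, and $\limsup_\ell\omega(\ell)=0\neq 1$, so the hypothesis of Theorem~\ref{thm:generalis} is satisfied.

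For the lower bound on $L$, I would apply assumption~(2) of Definition~\ref{def:indepe} with $\psi$ trivial and $\beta=1$ to get $\sum_{p\leq T,\,p\in\c P}\log p=\varpi T+O(T/(\log T)^A)$. Partial summation converts this into $\sum_{p\leq T,\,p\in\c P}1/p=\varpi\log\log T+O(1)$. Restricting the defining sum of $L(T)$ to squarefree integers composed of odd primes of $\c P$ below $T$, and expanding the resulting Euler factor, yields
\[
L(T)\gg\prod_{\substack{p\leq T,\,p\in\c P\\p\text{ odd}}}\l(1+\frac{c}{p}\r)\gg(\log T)^{\varpi},
\]
which is exactly what is needed at $T=B^{1/8}$. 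Plugging these ingredients into Theorem~\ref{thm:generalis} closes the argument.

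The principal obstacle is the explicit local calculation yielding $\omega(\ell)\asymp 1/\ell$; once this is in hand the Mertens-type estimate for $L(T)$ is classical and the application of Theorem~\ref{thm:generalis} is mechanical.
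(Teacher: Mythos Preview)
Your proposal is correct and follows essentially the same route as the paper: apply Theorem~\ref{thm:generalis} with $n=k=m=2$, $U=[-1,1]^2$, $Y=\{\mathbf 0\}$, verify $\omega(\ell)\asymp 1/\ell$ for odd $\ell\in\c P$ by the tame Hilbert-symbol count (the paper records the explicit two-sided bound $\frac{1}{\ell}(1-\frac{4}{\ell})\le\omega(\ell)\le\frac{2}{\ell}$), take $\omega(\ell)\ge 0$ trivially at the remaining primes, and then lower-bound $L(T)$ by a Mertens-type estimate over $\c P$ (the paper cites \cite[Theorem~14.3]{koukou} for this step). The only cosmetic difference is that you phrase ``$\omega(2)=0$'' as enlarging the set, whereas the paper simply uses the trivial lower bound $\omega(\ell)\ge 0$ for $\ell\le 3$ and $\ell\notin\c P$; both are valid and lead to the same $L(B^{1/8})\gg(\log B)^{\varpi}$.
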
\begin{proof}We use Theorem~\ref{thm:generalis} with $\Omega=S_\c P\cap [-B,B]^2$, $m=2, n=2,U=[-1,1]^2$ and $Y=\{(0,0)\}$.
To bound $\#\Omega(\Z/\ell \Z)$ for $\ell\geq 5 $   in $\c P$  we consider separately the contribution of the cases 
\begin{itemize}\item $\ell^2 \mid st $, \item $\ell \mid s, \ell^2\nmid s, \ell \nmid t$,
\item $\ell \mid t, \ell^2\nmid t, \ell \nmid s$,\item $\ell\nmid st$.
\end{itemize}  In the second case, $t$ must reduce to  a square in $\F_\ell$, hence, we obtain 
 $$\ell^{4}(1-\omega(\ell) ) = \#\Omega(\Z/\ell^2 \Z)\leq 3\ell^2+2(\ell-1) \frac{\ell(\ell -1)}{2}+ \ell^2 (\ell-1)^2\Rightarrow 
  \frac{ 1}{\ell} \l(1-\frac{4}{\ell} \r)  \leq    \omega(\ell)   .$$ To     upper-bound   $\omega(\ell)$ we note that the last case gives 
 $\ell^{4}(1-\omega(\ell) ) = \#\Omega(\Z/\ell^2 \Z)\geq \ell^2 (\ell-1)^2 $, hence, 
$ \omega(\ell) \leq 2/\ell$. For primes $\ell \notin \c P$ or $\ell=2,3$ we use the trivial bound $\omega(\ell)\geq 0$. Thus, $\limsup \omega(\ell) =0$, hence,  Theorem~\ref{thm:generalis} provides the following bound for the the quantity in our lemma:
$$ \ll  \frac{1}{z (\log z) } \frac{B^2}{L(B^{\frac{1}{8}})} + B^{2-\frac{1}{8} } 
,$$ where  $$L(T)\geq \sum_{\substack{ q\leq T \\\ell\mid q\Rightarrow \ell\ in  \ \c P, \ell\geq 5 }} \frac{\mu^2(q)}{q}
\prod_{\ell \mid q }   \l(1-\frac{4}{\ell} \r)  \frac{1}{1-\frac{1}{\ell} (1-\frac{4}{\ell} ) }
\gg \prod_{\substack{ 5\leq \ell \leq T\\ \ell \in \c P }}\l(1-\frac{1}{\ell} \r)\gg (\log T)^{-\varpi},$$
where we used~\cite[Theorem 14.3]{koukou} and~\eqref{AssumptionsonAAA} for the character $\chi=1$. \end{proof}
\begin{lemma}\label{bachfrenchsuites}For any $m\in \N$, prime $p$ and $B\geq p^{8m}$,
the number of $(s,t)\in S_\c P\cap [-B,B]^2$ such that  $p^m \mid s$ is $$ \ll  \frac{m^{\varpi}}{p^m } \frac{B^2}{(\log B)^{\varpi} }  ,$$ 
where the implied constant is independent of $B,p,m$ and $z$. 
\end{lemma}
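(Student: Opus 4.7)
The plan is to apply the large sieve (Lemma~\ref{lemser}) with all sieve moduli taken to be $\ell^{2m}$ uniformly, using the fact that both the divisibility $p^m\mid s$ and the $\c P$-local Hilbert symbol condition at $p$ are visible modulo $p^{2m}$. Let $\Omega$ denote the set whose cardinality we bound, viewed inside the cube $[-B,B]^2$. For each prime $\ell$ I would compute $\omega(\ell)$ via $\#\Omega(\Z/\ell^{2m}\Z)=\ell^{4m}(1-\omega(\ell))$: for $\ell=p$ the constraint $p^m\mid s$ contributes a density factor of $p^{-m}$, so $\omega(p)\geq 1-C/p^m$ for an absolute constant $C$; for $\ell\in\c P\setminus\{p\}$ the same Hilbert-symbol case analysis used in the proof of Lemma~\ref{lemslargeprim} (splitting by $v_\ell(s)$ and $v_\ell(t)$) yields $\omega(\ell)\gg 1/\ell$; and for $\ell\notin \c P\cup\{p\}$ I take $\omega(\ell)=0$.

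With $T:=B^{1/(4m)}$, Lemma~\ref{lemser} gives $|\Omega|\leq (2B)^2/L(T)$. To lower-bound $L(T)$ I would isolate the contribution of squarefree $q$ divisible by $p$, writing $q=pq'$ with $(q',p)=1$, and factoring out the ratio coming from $\ell=p$:
\[
L(T)\geq \frac{\omega(p)}{1-\omega(p)}\sum_{\substack{q'\leq T/p,\ (q',p)=1 \\ q'\text{ squarefree, supported in }\c P}} \mu^2(q')\prod_{\ell\mid q'}\frac{\omega(\ell)}{1-\omega(\ell)} \gg p^m (\log(T/p))^{\varpi},
\]
where the Mertens-type estimate for the inner sum is precisely the bound $\gg(\log\cdot)^\varpi$ already invoked at the end of the proof of Lemma~\ref{lemslargeprim}, which follows from condition~(2) of Definition~\ref{def:indepe} applied to the trivial character. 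Since $B\geq p^{8m}$ gives $\log(T/p)=(\log B)/(4m)-\log p\gg(\log B)/m$, I conclude $L(T)\gg p^m(\log B)^{\varpi}/m^{\varpi}$, and the large sieve yields the claimed upper bound $\ll m^\varpi B^2/(p^m(\log B)^\varpi)$.

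The main obstacle is establishing the uniform lower bound $\omega(p)\geq 1-C/p^m$ with $C$ independent of $p$ and $m$; the risk would be that the Hilbert condition at $p$ combined with $v_p(s)\geq m$ amplifies the density loss far beyond $p^{-m}$. This is ruled out by a direct case analysis on $v_p(t)$: for the dominant case $p\nmid t$ the Hilbert symbol $(s,t)_{\Q_p}$ reads off only the residue of $s/p^{v_p(s)}\bmod p$ and the parity of $v_p(s)$, a condition of density $\asymp 1/2$ that is orthogonal to the divisibility constraint $p^m\mid s$, so the combined density modulo $p^{2m}$ remains $\asymp p^{-m}$. The remaining subcases $v_p(t)\geq 1$ only further cut density by a constant factor, leaving the bound $\omega(p)\geq 1-C/p^m$ intact.
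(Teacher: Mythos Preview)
Your approach matches the paper's: apply Lemma~\ref{lemser}, isolate the factor $\omega(p)/(1-\omega(p))\gg p^m$ in $L(T)$, and bound the remaining $\c P$-supported sum by $\gg(\log T/p)^\varpi$ via the Mertens-type estimate already used in Lemma~\ref{lemslargeprim} (the paper takes sieve moduli $\ell^m$ and $T=B^{1/2m}$ rather than your $\ell^{2m}$ and $T=B^{1/4m}$, but this is cosmetic). Your final paragraph, however, worries in the wrong direction: since the divisibility $p^m\mid s$ alone already forces $\#\Omega(\Z/p^{2m}\Z)\leq p^{3m}$, any further Hilbert-symbol constraint can only \emph{decrease} this count and hence \emph{increase} $\omega(p)$, so $\omega(p)\geq 1-p^{-m}$ is immediate and no case analysis on $v_p(t)$ is required.
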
\begin{proof} We use Lemma~\ref{lemser} with $n=2$. For primes $\ell \in \c P\setminus \{2,3,p\}$
one can use arguments  as in the proof of the proof of Lemma~\ref{lemslargeprim} to see that  $ 1-4 \ell^{-1}  \leq \omega(\ell) \ell  $.
For $\ell=p$  we trivially have    $p^{2m}(1-\omega(p) ) = \#\Omega(\Z/p^m \Z)\leq p^m$, hence, $\omega(p) \geq 1-p^{-m}$.
Therefore, if  $p\leq T$ we obtain $$\sum_{q\leq T } \mu^2(q)\prod_{\ell \mid q } \frac{\omega(\ell )}{ 1-\omega(\ell ) }
\geq \frac{\omega(p )}{ 1-\omega(p ) } \sum_{\substack{ t\leq T/p \\ \ell \mid t\Rightarrow \ell \in \c P\setminus \{2,3,p\} }} 
\mu^2(t)\prod_{\ell \mid t } \frac{\omega(\ell )}{ 1-\omega(\ell ) }
.$$Since $1/(1-\omega(\ell))\geq 1 $,  $\omega(p) \geq 1/2$ and $1-\omega(p)\leq p^{-m}$ we get the lower bound 
$$\frac{p^m}{2} \sum_{\substack{ t\leq T/p \\ \ell \mid t\Rightarrow \ell \in \c P\setminus \{2,3,p\} }} 
\frac{\mu^2(t)}{t} \prod_{\ell \mid t }  \l(1-\frac{4}{\ell} \r) \gg p^m \prod_{\substack{ \ell \leq T/p \\ \ell \in \c P\setminus \{2,3,p\} }} 
\l(1+\frac{1}{\ell}\r) \gg p^m \l(\log \frac{T}{p} \r)^{\varpi},$$by~\cite[Theorem 14.3]{koukou} and~\eqref{AssumptionsonAAA}. If $p\leq T^{1/2}$
the lower bound is $\gg p^m (\log T)^{\varpi}$ and we may use this with $T=B^{1/2m}$ together with Lemma~\ref{lemser} to conclude the proof.\end{proof}\begin{lemma}\label{bachpartitas}For      $B, z\geq 2$ the number of $(s,t)\in S_\c P\cap [-B,B]^2$ for which there exists a prime $p>z$ such that $p^2\mid s$ or $p^2\mid t $ is $$ \ll  \frac{1}{z (\log z) } \frac{B^2}{(\log B)^{\varpi}} + B^{2-1/16} ,$$ where the implied constant is independent of $B$ and $z$. 
\end{lemma}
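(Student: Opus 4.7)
The plan is to decompose the range of primes $p > z$ into two subranges and treat each by a different method. Let the small range be $z < p \leq B^{1/16}$ and the large range be $p > B^{1/16}$.

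For the small range, I would invoke Lemma~\ref{bachfrenchsuites} with $m = 2$ for each such prime $p$, noting that the hypothesis $B \geq p^{16}$ is automatic throughout this range. The lemma is phrased for the condition $p^m \mid s$, but the same bound holds for $p^m \mid t$ by the symmetry $(s,t)_{\Q_p} = (t,s)_{\Q_p}$ of the Hilbert symbol combined with a swap of $a$ and $b$ in the sign conditions defining $S_{\c P}$. The contribution from the small range is therefore bounded by
$$\ll \frac{B^2}{(\log B)^{\varpi}} \sum_{p > z} \frac{1}{p^2} \ll \frac{1}{z \log z}\cdot \frac{B^2}{(\log B)^{\varpi}},$$
where the final estimate is the standard bound $\sum_{p > z} p^{-2} \ll 1/(z \log z)$ obtained by partial summation against the Prime Number Theorem.

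For the large range $p > B^{1/16}$, a trivial box count already suffices because $p^2 \mid s$ becomes very restrictive. The constraint $s \neq 0$ (forced by $as > 0$ with $a \in \{1,-1\}$) together with $|s| \leq B$ forces $p^2 \leq B$, so only primes in $(B^{1/16}, B^{1/2}]$ contribute. For each such $p$ the number of admissible nonzero $s$ is at most $2B/p^2$, and each choice allows at most $2B+1$ values of $t$, yielding
$$\sum_{B^{1/16} < p \leq B^{1/2}} \frac{B}{p^2}\cdot B \ll \frac{B^2}{B^{1/16}\log B} \ll B^{2-1/16}.$$
The same bound applies to $p^2 \mid t$ by the symmetry noted above, and summing the two ranges produces the claimed estimate.

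I do not expect any serious obstacle here; the only detail that requires care is verifying the $s \leftrightarrow t$ symmetry of $S_{\c P}$, which is immediate from the symmetry of the Hilbert symbol. Compared to Lemma~\ref{lemslargeprim}, the geometric--large sieve is not needed for the present lemma because the saving $1/p^2$ coming from the $p^m \mid s$ condition with $m=2$ is already summable against a PNT-style estimate, absorbing the small primes without sieve machinery.
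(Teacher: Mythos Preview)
Your proof is correct and follows essentially the same route as the paper: split at $B^{1/16}$, handle the large primes by the trivial box count $\sum_{p>B^{1/16}} B^2/p^2 \ll B^{2-1/16}$, and handle the small primes by applying Lemma~\ref{bachfrenchsuites} with $m=2$ (whose hypothesis $B\geq p^{8m}=p^{16}$ is exactly the range $p\leq B^{1/16}$) and summing $\sum_{p>z} p^{-2}\ll 1/(z\log z)$. The symmetry remark for $p^2\mid t$ is a welcome clarification that the paper leaves implicit.
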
\begin{proof}The contribution of $p>B^{1/16}$ is bounded trivially by $$ \ll \sum_{p>B^{1/16}}\frac{B^2}{p^2}\ll B^{2-1/16}.$$For the remaining range we can use Lemma~\ref{bachfrenchsuites} with $m=2$ and sum over $p$ in $ (z,B^{1/16}]$.\end{proof}
Define for each prime $\ell \leq z$ the integer \beq{apologynotrequested}{k_\ell= \l[\frac{\log z }{\log \ell} \r].}
\begin{lemma}\label{bachpartitas23}For      $2\leq z \leq B^{1/16}$ the number of $(s,t)\in S_\c P\cap [-B,B]^2$ for which there exists a prime $\ell\leq z $ 
such that $ \ell^{1+k_\ell}\mid s$ or $\ell^{1+k_\ell}\mid t $ is $$ \ll  \frac{1}{z ^{1/2} (\log z) } \frac{B^2}{(\log B)^{\varpi}}  ,$$ where the 
implied constant is independent of $B$ and $z$.  \end{lemma}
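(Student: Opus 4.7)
The natural approach is to apply Lemma~\ref{bachfrenchsuites} to each prime $\ell\le z$ with $m:=1+k_\ell$ and sum over $\ell$. The hypothesis $B\ge \ell^{8m}$ is verified using~\eqref{apologynotrequested}: since $\ell^{k_\ell}\le z$, one has $\ell^{1+k_\ell}\le \ell z\le z^2$, hence $\ell^{8(1+k_\ell)}\le z^{16}\le B$ by the standing assumption $z\le B^{1/16}$. Adding the analogous contribution with $t$ in place of $s$, the count to estimate is
\[
\ll \frac{B^2}{(\log B)^\varpi}\sum_{\ell\le z}\frac{(1+k_\ell)^\varpi}{\ell^{1+k_\ell}}.
\]

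To bound this sum, I would partition the primes $\ell\le z$ according to the value of $k_\ell$. By~\eqref{apologynotrequested}, $k_\ell=m$ exactly when $\ell\in (z^{1/(m+1)},z^{1/m}]$, and for such $\ell$ one has $\ell^{1+k_\ell}=\ell^{m+1}$. Partial summation against the prime number theorem yields
\[
\sum_{p>z^{1/(m+1)}}\frac{1}{p^{m+1}}\ll \frac{m+1}{m^2\, z^{m/(m+1)}\,\log z},
\]
so the contribution of the primes with $k_\ell=m$ to the sum above is $\ll m^{\varpi-1}/(z^{m/(m+1)}\log z)$. This is maximised at $m=1$, producing $\ll 1/(z^{1/2}\log z)$; for $m\ge 2$ the exponent $m/(m+1)\ge 2/3$ and, since at most $\log_2 z$ values of $m$ are admissible, the tail is absorbed into the $m=1$ term. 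Multiplying by $B^2/(\log B)^\varpi$ then yields the claimed bound.

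The main conceptual point is that the bottleneck comes from primes $\ell\in(\sqrt z,z]$, where $k_\ell=1$ and $\ell^{1+k_\ell}=\ell^2$ only just exceeds $z$; this forces the savings to be $z^{-1/2}$ rather than the $z^{-1}$ one might naively hope for from divisibility by a prime power $>z$. The only real obstacle beyond routine partial summation is verifying that the prefactor $(1+k_\ell)^\varpi$ cannot overwhelm the geometric decay in $z^{-m/(m+1)}$; this is immediate since $k_\ell\ll \log z$ grows only logarithmically, so no new tools beyond Lemma~\ref{bachfrenchsuites} and the prime number theorem are needed.
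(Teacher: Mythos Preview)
Your proposal is correct and follows essentially the same route as the paper: apply Lemma~\ref{bachfrenchsuites} with $m=1+k_\ell$ (the hypothesis $B\ge\ell^{8m}$ being verified via $\ell^{1+k_\ell}\le z^2\le B^{1/8}$), partition the primes $\ell\le z$ according to the value of $k_\ell$, isolate the dominant $k_\ell=1$ contribution $\ll 1/(\sqrt z\log z)$ via the prime number theorem, and absorb the $k_\ell\ge 2$ tail. The only cosmetic difference is that the paper bounds the $k\ge 2$ range more crudely by replacing the prime sum with an integer sum, whereas you keep the partial-summation estimate throughout; note also that your stated bound $\sum_{p>z^{1/(m+1)}}p^{-(m+1)}\ll (m+1)/(m^2 z^{m/(m+1)}\log z)$ is slightly too optimistic in its $m$-dependence (the correct order is $(m+1)/(z^{m/(m+1)}\log z)$), but since $m\ll\log z$ this does not affect the conclusion.
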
\begin{proof}We have $\ell^{1+k_\ell} \leq \ell z\leq z^2 \leq B^{1/8}$, hence, by 
Lemma~\ref{bachfrenchsuites} with $m=1+k_\ell$ we obtain the bound
$$\ll \frac{B^2}{(\log B)^{\varpi} }\sum_{\ell \leq z} \frac{(1+k_\ell)^{\varpi}}{\ell^{1+k_\ell}}= \frac{B^2}{(\log B)^{\varpi}} \sum_{1\leq k \leq \frac{\log z}{\log 2} } 
(1+k)^{\varpi} \sum_{  z^{1/(k+1)} < \ell \leq z^{1/k}   } \frac{1}{\ell^{1+k}}
.$$  By partial summation and the prime number theorem  we  bound the contribution of $k=1$ by 
$$ \ll  \sum_{\ell>\sqrt z} \frac{1}{\ell^2} \ll \frac{1}{\sqrt z(\log z)}.$$ 
Noting that $\varpi\leq 1 $ and using the estimate $$ \sum_{m\in \N, m>y}m^{-k-1} \ll \int_y^\infty t^{-k-1}\mathrm d t +y^{-k-1} \ll \frac{1}{k y^k }+y^{-k-1}$$
that holds with   absolute implied constants, shows that the sum over $k\neq 1$ is  \[ \ll \sum_{2\leq k\leq \frac{\log z }{\log 2 } }  (k+1) 
 \l(\frac{1}{k z^{k/(k+1)}   }+ \frac{1}{z}   \r) \ll  \sum_{2\leq k\leq \frac{\log z }{\log 2 } } \frac{1}{ z^{ k/(k+1)} }    
+ \sum_{2\leq k\leq \frac{\log z }{\log 2 } } \frac{k}{z}  .\] This is
\[\ll \frac{1}{z^{3/4} } \sum_{2\leq k\leq \frac{\log z }{\log 2 } } 1 + \sum_{2\leq k\leq \frac{\log z }{\log 2 } } \frac{k}{z} \ll 
\frac{\log z}{z^{3/4}} + \frac{(\log z)^2}{z}, \]  which is satisfactory.\end{proof}
Combining Lemmas~\ref{lemslargeprim}-\ref{bachpartitas}-\ref{bachpartitas23} yields the following:
 \begin{lemma}\label{bachpartitasbsd7}With $k_\ell$ as in~\eqref{apologynotrequested}  and any    $2\leq z \leq B^{1/16}$ we have 
\[N(B;\c P) =\#\left\{\left(s,t\right) \in S_\c P:\begin{array}{l} 0<as,b t\leq B  \\\ell \leq z  \Rightarrow \ell^{1+k_\ell}\nmid s, \ell^{1+k_\ell}\nmid t  
,\\ \ell >z \Rightarrow  \ell \nmid \gcd(s,t)    , \ell ^2\nmid s, \ell ^2\nmid t     \end{array} \right\} +O\l(\frac{1}{z^{1/2} (\log z) } \frac{B^2}{(\log B)^{\varpi} }\r)
,\]where the  implied constant is independent of $B$ and $z$.  \end{lemma}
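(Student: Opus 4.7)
The plan is a direct inclusion argument. Writing $R(B;\c P)$ for the cardinality appearing on the right-hand side of the conclusion, one has $R(B;\c P)\leq N(B;\c P)$ trivially, and the difference $N(B;\c P)-R(B;\c P)$ counts precisely those $(s,t)\in S_\c P$ with $0<as,bt\leq B$ that violate at least one of the three stated divisibility restrictions. So it suffices to bound this difference.

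The violations fall into three families, corresponding exactly to the three restrictions: (a) there exists a prime $\ell>z$ with $\ell\mid\gcd(s,t)$; (b) there exists a prime $p>z$ with $p^2\mid s$ or $p^2\mid t$; and (c) there exists a prime $\ell\leq z$ with $\ell^{1+k_\ell}\mid s$ or $\ell^{1+k_\ell}\mid t$. By the union bound the difference is at most the sum of the sizes of these three families, which are controlled by Lemmas~\ref{lemslargeprim}, \ref{bachpartitas} and \ref{bachpartitas23} respectively. Adding the three resulting estimates yields an error of the form
\[
O\!\left(\frac{1}{z(\log z)}\frac{B^2}{(\log B)^\varpi}+\frac{1}{z^{1/2}(\log z)}\frac{B^2}{(\log B)^\varpi}+B^{2-1/8}+B^{2-1/16}\right),
\]
in which the first summand is dominated by the second since $z^{-1}\leq z^{-1/2}$.

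It remains to check that in the admissible range $2\leq z\leq B^{1/16}$ the polynomial contributions $B^{2-1/8}$ and $B^{2-1/16}$ are absorbed into the $z^{-1/2}(\log z)^{-1}(\log B)^{-\varpi}B^2$ term. From $z\leq B^{1/16}$ one gets $z^{1/2}\leq B^{1/32}$, so this main error is at least a constant multiple of $B^{2-1/32}(\log B)^{-1-\varpi}$, which dominates $B^{2-1/16}$ and \emph{a fortiori} $B^{2-1/8}$ once $B$ is sufficiently large. I do not anticipate any serious obstacle: all the analytic content is already packaged into the three auxiliary lemmas, and what remains is just the union bound together with this elementary comparison of error terms in the admissible range of $z$.
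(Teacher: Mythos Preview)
Your proposal is correct and is essentially the paper's own argument: the paper simply states that combining Lemmas~\ref{lemslargeprim}, \ref{bachpartitas} and \ref{bachpartitas23} yields the result, and you have supplied exactly that union-bound combination together with the routine check that the polynomial error terms $B^{2-1/8}$ and $B^{2-1/16}$ are absorbed by the main error in the range $z\leq B^{1/16}$.
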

\subsection{Second step: divide}\label{s:periodicity}We shall need the following periodicity property  of the Hilbert symbol:
Let $p$ be an  odd prime, $k\geq 2 $ be an integer and let  $s,t \in \Z$ satisfy  $v_p(s),v_p(t)\leq k$.
If $\sigma, \tau $ are integers in the range 
$1\leq \sigma,\tau \leq p^{1+k_p} $ 
that satisfy $(s,t)\equiv (\sigma,\tau)\md {p^{1+k} }$, 
then   $v_p(\sigma)=v_p(s)$,
$v_p(\tau )=v_p(t)$ 
and   
$ s p^{-v_p(s) }\equiv \sigma p^{-v_p(\sigma)}\md {p} $, 
$ t p^{-v_p(t) }\equiv \tau p^{-v_p(\tau )}\md {p} $.
In particular, $$ (s,t)_{\Q_p} =\l(\frac{-1}{p} \r)^{v_p(s)v_p(t)}
\l(\frac{s p^{-v_p(s)}}{p} \r)^{v_p(t)}
\l(\frac{t p^{-v_p(t) }}{p} \r)^{v_p(s)}
=(\sigma,\tau)_{\Q_p}.$$
For $p=2 $ and integers $s,t $ with $v_2(s),v_2(t)\leq k$
we let 
$\sigma, \tau $ be  integers in the range 
$1\leq \sigma,\tau \leq 2^{3+k } $ with  $(s,t)\equiv (\sigma,\tau)\md {2^{3+k} }$.
It is then easy to see that $v_2(s)=v_2(\sigma),v_2(\tau )=v_2(t)$ 
and that  $s 2^{-v_2(s)} \equiv \sigma 2^{-v_2(\sigma)} \md{8}, 
\tau  2^{-v_2(\tau )} \equiv t  2^{-v_2(t )} \md{8}$.
In particular,  $(s,t)_{\Q_2}=(\sigma,\tau)_{\Q_2} $. 

With $k_\ell$ as in~\eqref{apologynotrequested}  we   define  $$ W=2^{3+k_2} \prod_{\ell \textrm{ prime in } (2,z]  } \ell^{1+k_\ell} .$$   
\begin{lemma}\label{saladthencoffee} For any    $2\leq z \leq B^{1/16}$ we have  
\[N(B;\c P)
=
\sum_{\substack{ (\sigma ,\tau ) \in (\Z/W\Z)^2   \\ v_\ell(\sigma),v_\ell(\tau)\leq k_\ell \forall \ell\leq z   }}
\c M_{\sigma ,\tau }( B;z)+O\l(\frac{1}{z^{1/2} (\log z) } \frac{B^2}{(\log B)^{\varpi} }\r)
, \]  where  the sum is subject to the extra condition 
$(\sigma ,\tau  )_{\Q_\ell}=1$ for all primes $ \ell \in \c P \cap[1,z] $ and 
\[\c M_{\sigma ,\tau }( B;z):=\# \left\{\left(s,t\right) \in \Z^2: \begin{array}{l}
(s,t)\equiv (\sigma, \tau ) \md W,\\ 0<as, bt\leq B   , \\
\ell >z \Rightarrow  \ell^2 \nmid st, \\ \ell>z, \ell \in \c P\Rightarrow (s,t)_{\Q_\ell}= 1  \end{array}
\right\}
.\]\end{lemma}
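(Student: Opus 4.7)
The starting point is Lemma~\ref{bachpartitasbsd7}, which already writes $N(B;\c P)$ as the count of pairs $(s,t) \in S_\c P$ subject to the sign and range constraints $0<as,bt\leq B$, the local-valuation conditions $\ell^{1+k_\ell}\nmid s,t$ for all primes $\ell\leq z$, and $\ell\nmid\gcd(s,t)$, $\ell^2\nmid s$, $\ell^2\nmid t$ for all primes $\ell>z$, up to the stated error. The plan is to partition that set by the residue class $(\sigma,\tau)\in (\Z/W\Z)^2$ of $(s,t)$ modulo $W$ and to verify that each condition either becomes a selection rule on $(\sigma,\tau)$ or is faithfully retained inside the inner count $\c M_{\sigma,\tau}(B;z)$.

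For primes $\ell\leq z$ the divisibility restriction $\ell^{1+k_\ell}\nmid s,t$ translates, since $\ell^{1+k_\ell}$ divides $W$, into the condition $v_\ell(\sigma),v_\ell(\tau)\leq k_\ell$, which matches the outer-sum restriction exactly; the case $\ell=2$ works because $W$ carries the larger power $2^{3+k_2}$ while only $v_2(\sigma)\leq k_2$ is imposed. For primes $\ell>z$ the trio $\ell\nmid\gcd(s,t)$, $\ell^2\nmid s$, $\ell^2\nmid t$ is equivalent to $v_\ell(st)\leq 1$, that is, $\ell^2\nmid st$, which is exactly the form appearing in the definition of $\c M_{\sigma,\tau}(B;z)$.

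The decisive step is to handle the Hilbert solubility constraint $(s,t)_{\Q_\ell}=1$ for $\ell\in\c P$ via the periodicity property recorded at the start of \S\ref{s:periodicity}. Because $v_\ell(s),v_\ell(t)\leq k_\ell$ and $(s,t)\equiv (\sigma,\tau)\pmod{\ell^{1+k_\ell}}$ for odd $\ell\leq z$, and analogously $(s,t)\equiv (\sigma,\tau)\pmod{2^{3+k_2}}$ for $\ell=2$, one has $(s,t)_{\Q_\ell}=(\sigma,\tau)_{\Q_\ell}$. Thus the constraint at primes $\ell\in\c P\cap[1,z]$ factors out as a local condition $(\sigma,\tau)_{\Q_\ell}=1$ on the residue class, while the constraint at primes $\ell\in\c P$ with $\ell>z$ remains inside $\c M_{\sigma,\tau}(B;z)$. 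Combining these translations rewrites the count from Lemma~\ref{bachpartitasbsd7} as the claimed double sum over residue classes, and transfers the error term verbatim. I expect no substantive obstacle here: the entire argument is a clean bookkeeping of local-at-$\ell$ conditions, with the only non-formal input being the Hilbert-symbol periodicity already stated.
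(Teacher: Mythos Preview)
Your proposal is correct and matches the paper's approach exactly: the paper states Lemma~\ref{saladthencoffee} immediately after recording the Hilbert-symbol periodicity, treating it as the evident partition of the set from Lemma~\ref{bachpartitasbsd7} into residue classes modulo $W$. Your verification that the conditions $\ell\nmid\gcd(s,t)$, $\ell^2\nmid s$, $\ell^2\nmid t$ for $\ell>z$ collapse to $\ell^2\nmid st$, and that the small-prime Hilbert conditions factor onto $(\sigma,\tau)$ via periodicity, is precisely the bookkeeping the paper leaves implicit.
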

\subsection{Third step: rule}\label{s:methodiwafrid}
We use the Hasse principle to bring in  
explicit expressions. \begin{lemma}\label{lem:fridiwanchilvert}We have $$\c M_{\sigma,\tau}(B;z)=
  \sum_{\substack{ \b d , \b e \in \N^2,  \ell\mid d_1e_1\Rightarrow \ell\in \c P
  \\  d_1 d_2 \leq B/\sigma',  e_1e_2 \leq B /\tau'  }}  
\frac{\mu(d_1d_2e_1e_2)^2}{2^{\#\{ \ell\in \c P: \ell \mid d_1d_2e_1e_2 \}} }
\l(\frac{b \tau'  e_2 }{d_1}\r) \l(\frac{a \sigma'   d_2 }{e_1}\r)
(-1)^{\frac{(d_1-1)(e_1-1)}{4}},$$ where the sum   over $\b d ,\b e $ is subject to  $  d_1 d_2 \equiv a\sigma/\sigma'    \md { W /\sigma'  } ,
 e_1 e_2\equiv b\tau/\tau'    \md { W / \tau'   } $  and the constants $\sigma',\tau'$ are defined by 
$$\sigma':=\prod_{p\leq z}p^{v_p(\sigma)},  \tau':=\prod_{p\leq z}p^{v_p(\tau)}.$$
\end{lemma}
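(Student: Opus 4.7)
The plan is to express the indicator of local solubility at primes $\ell > z$, $\ell \in \c P$ as an explicit character sum and then reparametrise the count by a factorisation keyed to the behaviour of $(s,t)$ at each such prime.

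First, I would set up a factorisation of $(s,t)$. The congruence $s \equiv \sigma \pmod W$, combined with $v_\ell(\sigma) \leq k_\ell < v_\ell(W)$ for every $\ell \leq z$, forces $v_\ell(s) = v_\ell(\sigma')$ for such $\ell$; likewise for $(t, \tau')$. Writing $s = a\sigma' M$ and $t = b\tau' N$, the constraint $\ell^2 \nmid st$ for $\ell > z$ ensures that $M, N \in \N$ are squarefree, coprime positive integers with all prime factors $> z$. The congruences on $(s, t)$ modulo $W$ then translate into $M \equiv a\sigma/\sigma' \pmod{W/\sigma'}$ and $N \equiv b\tau/\tau' \pmod{W/\tau'}$, while the size constraints become $M \leq B/\sigma'$ and $N \leq B/\tau'$.

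Next, I will detect local solubility. Since every prime $\ell > z$ is odd, the standard formula for the Hilbert symbol gives $(s,t)_{\Q_\ell} = 1$ if $\ell \nmid MN$, $(s,t)_{\Q_\ell} = \left(\frac{t}{\ell}\right)$ if $\ell \mid M$, and $(s,t)_{\Q_\ell} = \left(\frac{s}{\ell}\right)$ if $\ell \mid N$. Using the identity $\mathbf{1}[(s,t)_{\Q_\ell} = 1] = (1 + (s,t)_{\Q_\ell})/2$ and multiplying over $\ell \in \c P$ with $\ell > z$, only the primes in $T := \{\ell \in \c P : \ell > z,\ \ell \mid MN\}$ contribute non-trivially, giving
$$\prod_{\ell > z,\, \ell \in \c P}\mathbf{1}\!\left[(s,t)_{\Q_\ell} = 1\right] \;=\; \frac{1}{2^{|T|}}\sum_{S \subseteq T}\prod_{\ell \in S}(s,t)_{\Q_\ell}.$$
To each $S \subseteq T$ I will attach the squarefree positive integers $d_1 = \prod_{\ell \in S,\, \ell \mid M}\ell$, $e_1 = \prod_{\ell \in S,\, \ell \mid N}\ell$, $d_2 = M/d_1$, $e_2 = N/e_1$. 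Multiplicativity of the Jacobi symbol gives $\prod_{\ell \in S}(s,t)_{\Q_\ell} = \left(\frac{t}{d_1}\right)\left(\frac{s}{e_1}\right)$, and substituting $s = a\sigma' d_1 d_2$ and $t = b\tau' e_1 e_2$ isolates the cross term $\left(\frac{e_1}{d_1}\right)\left(\frac{d_1}{e_1}\right)$; quadratic reciprocity, applicable since $d_1, e_1$ are odd, positive and coprime, turns this into $(-1)^{(d_1-1)(e_1-1)/4}$, producing precisely the integrand of the stated formula.

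Finally, I will assemble the pieces. The map $(M, N, S) \mapsto (d_1, d_2, e_1, e_2)$ is a bijection onto the set of $4$-tuples $(\b d, \b e) \in \N^2 \times \N^2$ with $d_1 d_2 e_1 e_2$ squarefree and $\ell \mid d_1 e_1 \Rightarrow \ell \in \c P$, and under it $|T| = \#\{\ell \in \c P : \ell \mid d_1 d_2 e_1 e_2\}$, while the constraints on $M, N$ translate directly to the stated constraints on $d_1 d_2$ and $e_1 e_2$. The hard part will be the bookkeeping rather than any single calculation: one has to verify that this bijection preserves every condition (positivity, squarefreeness, the coprimality $\gcd(M, N) = 1$, the congruences modulo $W/\sigma'$ and $W/\tau'$, and the membership of prime divisors of $d_1 e_1$ in $\c P$), and that the $1/2^{|T|}$ denominator matches the exponent in the $1/2^{\#\{\ell \in \c P:\, \ell \mid d_1 d_2 e_1 e_2\}}$ factor appearing in the lemma.
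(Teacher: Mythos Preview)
Your proposal is correct and follows essentially the same route as the paper: factor $s=a\sigma' s_1$, $t=b\tau' t_1$ with $s_1t_1$ squarefree and coprime to $W$, expand the indicator $\prod_{\ell\in\c P,\,\ell\mid s_1t_1}\frac{1}{2}\bigl(1+(s,t)_{\Q_\ell}\bigr)$ as a sum over divisor pairs $(d_1,e_1)\mid(s_1,t_1)$ supported on $\c P$, and then apply quadratic reciprocity to the cross term $(\frac{e_1}{d_1})(\frac{d_1}{e_1})$. The only point worth making explicit in your write-up is that the congruence conditions on $d_1d_2$ and $e_1e_2$ automatically force $\gcd(d_1d_2e_1e_2,W)=1$, so that no separate ``all prime factors exceed $z$'' clause is needed on the right-hand side.
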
\begin{proof} Let us start by factoring    $s,t$ as a product of primes exceeding $z$ and primes below $z$, namely,
\beq{eq:hol2}{s=a \sigma' s_1, t=b \tau' t_1, } where  
$$ s_1:=\prod_{p>z}p^{v_p(s)},  t_1:=\prod_{p>z}p^{v_p(t)},$$ due to  $v_p(s)=v_p(\sigma)$ and $v_p(t)=v_p(\tau)$. Since 
$v_p(\sigma),v_p(\tau)<v_p(W)$  for all  $p\mid W $ we get  \beq{eq:progres}{  s_1\equiv\frac{a\sigma}{\sigma' }  \md {\frac{W  }{\sigma' } } ,
 t_1\equiv\frac{b\tau}{\tau' }  \md {\frac{W  }{\tau' } }  .} Any integer congruent to $b\tau/\tau' \md {W/\tau'}$ must be coprime to $W$. This is because for each $p\mid 
W$ we have  $v_p(\tau)=v_p(\tau' )$ and $v_p(W/\tau')\geq 1 $.    This gives 
\beq{eq:hol}{\c M_{\sigma,\tau}(B;z)=  \# \left\{\left(s_1,t_1\right) \in \N^2:
\begin{array}{l}   s_1   \leq B/\sigma' ,      t_1 \leq B /\tau',   \\
\eqref{eq:progres}, \mu^2(s_1 t_1 )=1,\\z<\ell\in \c P\Rightarrow (a\sigma's_1,b\tau't_1)_{\Q_\ell}= 1 \end{array}
\right\} .} The condition $z<\ell\in \c P\Rightarrow (a\sigma's_1,b\tau't_1)_{\Q_\ell}= 1$ has indicator function 
 \[  \prod_{\substack{\ell \mid s_1 \\ \ell\in \c P   }}\frac{1+\l(\frac{b\tau't_1}{\ell}\r)}{2}
\prod_{\substack{\ell \mid t_1 \\ \ell \in \c P }}\frac{1+\l(\frac{a\sigma's_1}{\ell}\r)}{2}
= 2^{-\#\{ \ell\in \c P: \ell \mid s_1   t_1 \}} \sum_{\substack{ \b d,\b e  \in  \N^2 \\d_1d_2=s_1,
e_1e_2=t_1  \\ \ell\mid d_1e_1\Rightarrow \ell\in \c P}} \l(\frac{b \tau' t_1}{d_1}\r) \l(\frac{a \sigma's_1 }{e_1}\r) ,\] 
where the products over primes $\ell \mid s_1$ and $\ell\mid t_1$ do not contain the condition $\ell>z$ since $s_1t_1$ is coprime to $W$.   
We can make use of  quadratic reciprocity to simplify $(\frac{d_1}{e_1})(\frac{e_1}{d_1})$, which is allowed 
owing to the fact that  for $z>2$ the positive integers $d_1,e_1$ are coprime to $W$, hence odd.
Substituting into~\eqref{eq:hol} concludes the proof of the lemma.\end{proof}
 Next, we reduce the range of the sum over $\b d,\b e$ by using the large sieve for quadratic characters.
\begin{lemma}\label{housephoneisringingandwakingmeup}The contribution to the sum in Lemma~\ref{lem:fridiwanchilvert} 
of the terms  for which  we have 
$  \max\{d_1,e_1\}>(\log B)^{20}  $ or $ \max\{d_2,e_2\}> (\log B)^{20}$   
  is $ \ll B^2(\log B)^{-7/6}$, where the implied constant  is absolute.
\end{lemma}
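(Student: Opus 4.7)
The sum in Lemma~\ref{lem:fridiwanchilvert} is a bilinear form in the four variables $d_1,d_2,e_1,e_2$ governed by the two Kronecker symbols $\l(\tfrac{b\tau' e_2}{d_1}\r)$ and $\l(\tfrac{a\sigma' d_2}{e_1}\r)$, coupled only by the sign $(-1)^{(d_1-1)(e_1-1)/4}$ together with the multiplicative constraints imposed by $\mu^2(d_1 d_2 e_1 e_2)$ and by the factor $2^{-\#\{\ell\in\c P:\ell\mid d_1 d_2 e_1 e_2\}}$. My first move is to separate the sign through the identity
\[
(-1)^{(d_1-1)(e_1-1)/4}=\tfrac12\bigl(1+\chi_4(d_1)+\chi_4(e_1)-\chi_4(d_1)\chi_4(e_1)\bigr),
\]
valid for odd $d_1,e_1$, where $\chi_4$ is the non-principal character modulo $4$. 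Each of the four resulting pieces is a product of a function of $d_1$ and a function of $e_1$; combined with M\"obius inversion on each pairwise gcd in $\mu^2(d_1 d_2 e_1 e_2)$, and with the multiplicativity of the $2^{-\#\{\cdot\}}$ factor, this reduces the task to estimating a bounded number of bilinear sums of the shape
\[
\sum_{\substack{d_1\sim D_1,\,d_2\sim D_2\\ e_1\sim E_1,\,e_2\sim E_2}}\alpha_{d_1}\beta_{d_2}\gamma_{e_1}\delta_{e_2}\,\l(\tfrac{e_2}{d_1}\r)\l(\tfrac{d_2}{e_1}\r),
\]
with coefficients bounded by $1$, supported on odd squarefree integers coprime to $W$ and lying in fixed residue classes modulo $W/\sigma'$ or $W/\tau'$.

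The main analytic input I would invoke is the quadratic large sieve of Heath--Brown, which for $|a_m|,|b_n|\leq 1$ and $m,n$ running over odd squarefree integers yields
\[
\sum_{m\leq M,\,n\leq N}a_m b_n\l(\tfrac{m}{n}\r)\ll_\epsilon (MN)^{1/2+\epsilon}(M+N)^{1/2}.
\]
After dyadically decomposing each of the four ranges into $O((\log B)^4)$ blocks subject to $D_1 D_2\leq B/\sigma'$ and $E_1 E_2\leq B/\tau'$, the $s\leftrightarrow t$ and $d_i\leftrightarrow e_i$ symmetries of the lemma reduce matters to the case $D_1>(\log B)^{20}$. Applying Heath--Brown to the $(d_1,e_2)$ inner sum for each fixed $(d_2,e_1)$ gives a bound of shape $(D_1 E_2)^{1/2+\epsilon}(D_1+E_2)^{1/2}$; multiplying by the trivial bound $D_2 E_1$ and inserting $D_1 D_2\leq B/\sigma'$, $E_1 E_2\leq B/\tau'$ produces a power saving in $D_1$ over the trivial block size $D_1 D_2 E_1 E_2$, which, summed over dyadic blocks, comfortably returns the required $\ll B^2(\log B)^{-7/6}$.

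The hard part will be the degenerate regime in which the partner of a large index is bounded: for instance $D_1>(\log B)^{20}$ but $E_2=O(1)$. Here $\l(\tfrac{e_2}{d_1}\r)$ is a fixed character in $d_1$ of bounded modulus and Heath--Brown on the $(d_1,e_2)$ pair gives nothing. I would pivot to the second Kronecker symbol: since $E_2$ is small, the constraint $E_1 E_2\leq B/\tau'$ forces $E_1\gg B$, so the bilinear form in $(d_2,e_1)$ is now sizeable, and a second application of Heath--Brown---or, when the modulus of $\l(\tfrac{e_2}{d_1}\r)$ is truly bounded, the P\'olya--Vinogradov inequality applied to the arithmetic progression of $d_1$ modulo $W/\sigma'$---supplies the cancellation of the required strength. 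A symmetric argument covers $E_2>(\log B)^{20}$ with $D_1=O(1)$, and likewise for the pair $(d_2,e_1)$. Bookkeeping these sub-cases across the $O((\log B)^4)$ dyadic blocks and the $O(1)$ pieces coming from the sign splitting, as in~\cite{MR2675875}, yields the claimed bound.
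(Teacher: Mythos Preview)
Your approach is essentially the paper's: both arguments rest on the quadratic large sieve applied to the bilinear character sums in $(d_1,e_2)$ and $(d_2,e_1)$. The paper cites Friedlander--Iwaniec's packaged version \cite[Lemma~2]{MR2675875} rather than Heath--Brown directly, and instead of dyadic blocks it fixes the two ``small'' variables (say $d_2,e_1\le(\log B)^{20}$) in an outer sum and applies the bilinear estimate to the inner $(d_1,e_2)$-sum; since $e_1$ is then frozen, the sign $(-1)^{(d_1-1)(e_1-1)/4}$ becomes a function of $d_1$ alone and is absorbed into the $d_1$-coefficient without your $\chi_4$ splitting. Your route is slightly more systematic (full dyadic decomposition, M\"obius on all pairwise gcd's) at the cost of extra bookkeeping; the paper's is shorter because it exploits that two variables at a time can be held fixed.

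One correction to your degenerate-case analysis: the claim ``$E_2$ small forces $E_1\gg B$'' is false, since $E_1E_2\le B/\tau'$ is only an upper bound. The missing sub-case, where $E_1,E_2$ are both $\le(\log B)^{20}$ while $D_1$ is large, is nonetheless harmless: then $e_1e_2\le(\log B)^{40}$, and the trivial bound gives a contribution at most $(\log B)^{40}\cdot\#\{(d_1,d_2):d_1d_2\le B\}\ll B(\log B)^{41}$, which is negligible against $B^2(\log B)^{-7/6}$. The paper handles exactly this sub-case in the same way (see the paragraph treating $d_1d_2\le(\log B)^{2C}$). With that patch your plan goes through.
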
 \begin{proof} Let $C=20$. The contribution of the terms with $\max\{d_2,e_1\} \leq (\log B)^C$ is 
  \[ \ll  \sum_{   d_2, e_1 \leq (\log B)^C} \bigg| \sum_{\substack{   d_1,   e_2 \ \in \  \N, \ell\mid d_1 \Rightarrow \ell\in \c P
\\  d_1\leq B/(\sigma'd_2), e_2\leq B/(\tau'e_1) \\   \gcd(d_1 e_2  ,d_2e_1  )= 1    } }
   \l(\frac{  e_2}{d_1}\r)  \frac{\mu(d_1 )^2\mu( e_2)^2}{ 2^{\#\{\ell\in \c P: \ell \mid d_1  e_2\}}}
  \l(\frac{b\tau'   }{d_1}\r)  (-1)^{\frac{(d_1-1)(e_1-1)}{4}} \bigg| ,\] where the sum over $d_1,e_2$ is subject to 
further congruence conditions as in Lemma~\ref{lem:fridiwanchilvert}.
These congruence conditions imply that $d_1e_2$ is odd, hence,~\cite[Lemma 2]{MR2675875} can be employed.
It yields   \[ \ll  B^{11/6} (\log B)^{7/6} \sum_{   d_2, e_1 \leq (\log B)^C} \l(   \frac{ 1}{  d_2e_1^{5/6}  } +    \frac{1}{e_1d_2^{5/6} } \r) 
  \ll B^{13/7} .\] A symmetric argument supplies the same bound for the      contribution of 
$\max\{d_1,e_2\}\leq (\log B )^C$. 
When  $\min\{d_1, e_2\}> (\log B )^C$, one will have  $d_2  \leq B /(\log B )^C$ and  $e_1\leq B / (\log B )^C$.
As before, the contribution is then seen to be  $$\ll  B^{11/6} (\log B )^{7/6} \sum_{  d_2 ,  e_1\leq B / (\log B)^C  }
\l(   \frac{1}{ d_2  e_1^{5/6}  } +    \frac{ 1}{e_1 d_2^{5/6} } \r) \ll  \frac{B^2 }{(\log 3B)^{-13/6+C/6}}.$$ The cases with 
$\min\{ d_2,e_1\}> (\log B )^C$ are treated in a similar manner.

Assume that $d_1 \leq (\log B )^C$. Then the terms with $e_2 \leq   (\log B  )^C$  have been treated, thus, we are left with the range
$e_2 >   (\log B )^C$. If $e_1 >   (\log B  )^C$ then we  must have $d_2 \leq (\log B )^C$, which shows that  
 $\max\{ d_1, d_2\} \leq (\log B )^C$. Then $d_1d_2 \leq (\log B )^{2C}$, hence, this   contributes  
$$ \ll \sum_{\substack{ e_1e_2 \leq B \\ d_1d_2 \leq (\log B )^{2C}   }} 1\ll B (\log B )^{1+2C} ,$$ which is acceptable.
In conclusion, if $d_1 \leq (\log B  )^C$ then one must have $e_1 \leq    (\log B  )^C$. 
A similar argument shows that  if $d_2 \leq (\log B  )^C$ then only the cases with  $e_2 \leq    (\log B  )^C$ do not contribute into the error term.
\end{proof}  
The contribution of the  terms with  $\max\{d_1,e_1\}\leq (\log B)^{20}$ and $(d_1,e_1) \neq (1,1)$  towards the sum in 
Lemma~\ref{lem:fridiwanchilvert} is $$ \ll \sum_{\substack{d_1,e_1 \leq (\log B)^{20} \\ \gcd(d_1e_1,W)=1, (d_1,e_1) \neq (1,1)}} 
\bigg|  \sum_{\substack{ d_2,e_2 \in \N, \gcd(d_2e_2,d_1e_1)=1   \\  d_1 d_2 \leq B/\sigma',  e_1e_2 \leq B /\tau'  }}  
\frac{\mu(d_2)^2 \mu(e_2)^2}{2^{\#\{ \ell\in \c P: \ell \mid  d_2 e_2 \}} } \l(\frac{ e_2 }{d_1}\r) \l(\frac{  d_2 }{e_1}\r)
\bigg|,$$ where the double sum over $d_2,e_2$ is subject to the congruence conditions in the analogous expression in Lemma~\ref{lem:fridiwanchilvert}.
Note that these congruences are primitive since $\gcd(d_1e_1,W)=1$. A straightforward modification of the case of the non-principal character case 
of~\cite[Corollary 2]{MR2675875} shows that if $d_1\neq 1$ then the sum over  $d_2$ is $\ll \tau(d_1 e_1e_2) e_1 B (\log B)^{-C}$ for any fixed $C>0$.
The overall contribution is    $$ \ll \frac{B}{(\log B)^C} \sum_{ d_1,e_1 \leq (\log B)^{20}} \tau(d_1) \tau(e_1) e_1   \sum_{  e_2   \leq B    } 
\tau(e_2) \ll B^2 (\log B)^{-C+44}, $$ which is $\ll B^2(\log B)^{-2}$   when     $C=46$.
A symmetric argument gives the same bound when $e_1 \neq 1 $.

  \begin{lemma}\label{lem:panolino} Let $\beta$ be a positive odd square-free number that is 
large enough so that assumption $(3)$ of Theorem~\ref{thm:analytic} holds. 
For  any $N\geq \exp(\beta^{1/160}) $, any $q$ coprime to $\beta$ with $\exp(\{\log \c L(q)\}^{5/4} )\leq N$, 
any   $a\in (\Z/q\Z)^*$ 
and any $m\in \N$ with   $\omega(m) \leq \exp ( \sqrt{\log N } )$
one has  $$\sum_{\substack{n\leq N, \gcd(n,m)=1  \\ p\mid n\Rightarrow p\in \c P\\ n\equiv a \md q } } \frac{\mu(n)^2}{ \tau(n) } \l(\frac{n}{\beta}\r) \ll 
\frac{N}{(\log N)^{2024} },$$ where the implied constant depends at most on $\c P$. \end{lemma}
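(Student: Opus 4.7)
The plan is to reduce the sum, via Möbius inversion and orthogonality of Dirichlet characters, to bounding
$$T(N;\chi,\beta) := \sum_{\substack{n\leq N\\p\mid n\Rightarrow p\in\c P}} \frac{\mu^2(n)}{\tau(n)}\chi(n)\l(\frac{n}{\beta}\r)$$
uniformly over Dirichlet characters $\chi$ modulo $q$ by $\ll N(\log N)^{-A}$ with $A$ large, and then to apply a Landau--Selberg--Delange style analysis whose engine is the cancellation furnished by assumptions~(2) and (5) of Definition~\ref{def:indepe}.

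First I would detach the coprimality condition $\gcd(n,m)=1$ by Möbius inversion; since $2^{\omega(m)}\le \exp(\sqrt{\log N})=(\log N)^{o(1)}$, this loss is comfortably absorbed. Detecting $n\equiv a\pmod q$ by orthogonality of Dirichlet characters modulo $q$ replaces the sum by $\phi(q)^{-1}\sum_{\chi}\bar\chi(a)\,T(N;\chi,\beta)$; the hypothesis $\exp(\{\log\c L(q)\}^{5/4})\le N$ guarantees that $q$ is small enough relative to $N$ for~\eqref{AssumptionsonAAA} to apply in the range we need. For each $\chi$, the primitive version $\psi^*$ has conductor dividing $q$, hence coprime to $\beta$, so assumption~(5) applies and gives $c_{\psi^*}(\beta)=0$ for our large $\beta$.

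For each $\chi$ I would analyse the Dirichlet series
$$F(s)\;=\;\prod_{p\in\c P}\l(1+\frac{\chi(p)(p/\beta)}{2p^s}\r).$$
Taking logarithms, $\log F(s)=\tfrac12\sum_{p\in\c P}\chi(p)(p/\beta)\,p^{-s}+O(1)$ for $\Re(s)>1$. Partial summation against~\eqref{AssumptionsonAAA}, combined with the vanishing $c_{\psi^*}(\beta)=0$, continues $\log F(s)$ analytically, with bounded real part, into a narrow strip $\Re(s)>1-c/(\log\log N)^{B}$ (for some small $c>0$ and any prescribed $B$), uniformly in $\chi$. Applying Perron's formula to $T(N;\chi,\beta)$ with contour height $T$ chosen in $[\exp(\beta^{1/200}),N^{1/2}]$ and then shifting the contour into this strip yields $T(N;\chi,\beta)\ll N(\log N)^{-A}$ for any prescribed $A$; choosing $A$ large enough to absorb the outer losses from $\phi(q)$ and the Möbius inversion gives the target saving $(\log N)^{-2024}$.

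The main obstacle will be making the Landau--Selberg--Delange argument fully quantitative and uniform in $\chi$: because the primes are restricted to $\c P$ and the effective exponent is the fractional $\varpi/2$, no genuine $L$-function is available, and the analytic continuation of $\log F$ must be built by hand from~\eqref{AssumptionsonAAA}. The numerical thresholds in the hypotheses ($N\ge \exp(\beta^{1/160})$ and $\exp(\{\log\c L(q)\}^{5/4})\le N$) are calibrated precisely so that the height of the Perron contour (limited by the $\exp(\beta^{1/200})$ in~(2)) and the width of its leftward shift (limited by $\c L(q)$) both lie safely inside the range of validity of~\eqref{AssumptionsonAAA}, leaving ample room to extract any fixed power of $\log N$.
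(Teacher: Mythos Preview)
Your overall architecture (orthogonality over characters modulo $q$, then for each character exploit $c_{\psi}(\beta)=0$ via a Landau--Selberg--Delange argument) is the same as the paper's, which simply quotes \cite[Theorem~13.2, Remark~13.3]{koukou} with $\kappa=0$ instead of redoing the contour analysis by hand. But there is a genuine gap in your treatment of the coprimality condition.

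You propose to detach $\gcd(n,m)=1$ by M\"obius inversion and claim that the cost is $2^{\omega(m)}\le \exp(\sqrt{\log N})$. This confuses $\omega(m)$ with $2^{\omega(m)}$: the hypothesis is $\omega(m)\le \exp(\sqrt{\log N})$, so M\"obius over the divisors of the squarefree kernel of $m$ produces up to $2^{\exp(\sqrt{\log N})}$ terms, which swamps any power-of-log saving. The paper avoids this entirely by \emph{not} inverting: it folds the condition $p\nmid m$ into the multiplicative function $f$ and observes that the resulting perturbation of the prime sum is
\[
\sum_{p\le T} f(p)\log p \;=\; \tfrac12\sum_{\substack{p\le T\\ p\in\c P}}\psi(p)\Big(\frac{p}{\beta}\Big)\log p \;+\; O\!\big(\omega(m)\log T\big),
\]
and for $T\ge \exp\{(\log N)^{3/4}\}$ the error $\omega(m)\log T$ is $\ll T^{3/4}$, hence negligible. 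With $Q:=\max\{\exp\{(\log N)^{3/4}\},\c L(q_\psi),\exp(\beta^{1/200})\}$ the prime sum is then $\ll T/(\log T)^A$ for $T\ge Q$, and the black-box with $\kappa=0$, $\epsilon=1/4$ gives $\sum_{n\le T}f(n)\ll T/(\log T)^{2024}$ once $\log T\ge (\log Q)^{5/4}$, which is exactly what the hypotheses on $N,q,\beta$ ensure at $T=N$. Once you repair the handling of $m$ in this way, your sketch and the paper's proof coincide; note also that orthogonality introduces no $\phi(q)$ loss (the $\phi(q)$ characters are offset by the prefactor $1/\phi(q)$), so that part of your bookkeeping is unnecessary.
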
\begin{proof} By orthogonality of characters we can write 
the sum as $$\frac{1}{\phi(q)}\sum_{\psi\md q } \overline{\psi(a)} 
\sum_{\substack{n\leq N, \gcd(n,m)=1  \\ p\mid n\Rightarrow p\in \c P} } \frac{\mu(n)^2}{ \tau(n) } \psi(n) \l(\frac{n}{\beta}\r).$$
Define the multiplicative function  $$ f(n)=\frac{\mu(n)^2}{ \tau(n) } \l(\frac{n}{\beta}\r) \mathds 1 (p\mid n\Rightarrow p\in \c P)
\mathds 1 (p\mid n\Rightarrow p\nmid m).$$  
We have $$ \sum_{p\leq T} f(p) \log p=\frac{1}{2} \sum_{\substack{ p\leq T\\p\in \c P}}\psi(p) \l(\frac{p}{\beta}\r)\log p+O\l(\omega(m) \log T\r)$$
with an absolute implied constant. Assume that $T\geq \exp\{(\log N)^{3/4}\}$ so that the assumed bound on $\omega(m)$ 
yields $\omega(m) \log T \ll T^{3/4}$. Define $$ Q:=  \max \l\{\exp\{(\log N)^{3/4}\}, \c L(q_\psi), \exp(\beta^{1/200})\r\}.$$
Since $\beta$ is large enough, the third assumption in Theorem~\ref{thm:analytic}  shows for each fixed $A>0$  one 
  has $$ \sum_{p\leq T} f(p) \log p \ll \frac{T}{(\log T)^A}$$ for all $T\geq Q$ and with an implied constant that depends at most on 
$A$. Thus, by~\cite[Theorem 13.2, Remark 13.3]{koukou}
with $\kappa=0,\epsilon=1/4$ and $ k=1$ we obtain the bound $$\sum_{n\leq T} f(n) \ll \frac{T}{(\log T)^{2024}}
$$ as long as $\log T\geq (\log Q)^{5/4}$.
By assumption this is satisfied when $T=N$, thus concluding the proof.\end{proof} 
\begin{lemma}\label{lem:ptzno} Assume that $z=z(B)\to+\infty$ satisfies $  z \leq \log \log B$. Then 
for all $B$ satisfying $  \{ \log \c L(\mathrm e ^{3z}) \}^{5/4}\leq \log (B/(\log B)^{23} ) $
we have \[N(B;\c P)=  \sum_{\substack{ (\sigma ,\tau ) \in (\Z/W\Z)^2   \\ v_\ell(\sigma),v_\ell(\tau)\leq k_\ell \forall \ell\leq z   }}
(\c M'_{\sigma ,\tau }( B;z)+\c M''_{\sigma ,\tau }( B;z))
+O\l(\frac{1}{z^{1/2} (\log z) } \frac{B^2}{(\log B)^{\varpi} }+\frac{W^2 B^2}{(\log B)^{7/6}}\r), \]  
where  the sum  is subject to   $(\sigma ,\tau  )_{\Q_\ell}=1$ for all primes $ \ell \in \c P \cap[1,z] $ and 
$$  \c M'_{\sigma ,\tau }( B;z)=\sum_{\substack{  d, e \in \N }}  
\frac{\mu( d e)^2}{2^{\#\{ \ell\in \c P: \ell \mid d e \}} }, 
\ \ \ 
\c M''_{\sigma ,\tau }( B;z)=  \sum_{\substack{  d,e\in \N\\  \ell\mid de\Rightarrow \ell\in \c P   }}  
\frac{\mu(d e )^2}{\tau(de) } \l(\frac{b \tau'    }{d}\r) \l(\frac{a \sigma'     }{e}\r)
(-1)^{\frac{(d-1)(e-1)}{4}} ,$$ where both  $\c M',\c M''$ sums   are subject to 
 $  d   \equiv a\sigma/\sigma'    \md { W /\sigma'  } ,
 e  \equiv b\tau/\tau'    \md { W / \tau'   }$ as well as    
$  d \leq B/\sigma',   e  \leq B /\tau'  $.
The implied constant depends at most on $\c P$. \end{lemma}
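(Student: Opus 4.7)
The plan is to combine Lemma~\ref{saladthencoffee} with the explicit character-sum formula from Lemma~\ref{lem:fridiwanchilvert}, and then perform a surgical decomposition of the resulting quadruple sum over $(d_1,d_2,e_1,e_2)\in\N^4$. Lemma~\ref{saladthencoffee} already yields $N(B;\c P)=\sum_{(\sigma,\tau)}\c M_{\sigma,\tau}(B;z)+O(B^2 z^{-1/2}(\log z)^{-1}(\log B)^{-\varpi})$, so the task reduces to establishing $\c M_{\sigma,\tau}(B;z)=\c M'_{\sigma,\tau}(B;z)+\c M''_{\sigma,\tau}(B;z)+O(B^2(\log B)^{-7/6})$ uniformly in $(\sigma,\tau)$; summing over the $W^2$ classes then produces the claimed error.

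For the decomposition, I would split the quadruple sum according to whether $(d_1,e_1)=(1,1)$ or $(d_2,e_2)=(1,1)$. The subsum with $(d_1,e_1)=(1,1)$ is precisely $\c M'_{\sigma,\tau}(B;z)$, after renaming $(d_2,e_2)\to(d,e)$: the two Kronecker symbols and the reciprocity sign all become trivial, and the congruences, ranges, and weight agree. The subsum with $(d_2,e_2)=(1,1)$ is precisely $\c M''_{\sigma,\tau}(B;z)$, after renaming $(d_1,e_1)\to(d,e)$: the key point is that for squarefree $de$ all of whose prime factors lie in $\c P$, one has $2^{\#\{\ell\in\c P:\ell\mid de\}}=\tau(de)$, converting the weight from Lemma~\ref{lem:fridiwanchilvert} into the weight appearing in $\c M''_{\sigma,\tau}$. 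These two subsums overlap only at the tuple $(1,1,1,1)$, producing a harmless double-count of $O(1)$ per class, and so $O(W^2)$ in total, which is comfortably absorbed into $W^2B^2(\log B)^{-7/6}$.

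The remaining tuples---those with $(d_1,e_1)\neq(1,1)$ and $(d_2,e_2)\neq(1,1)$---must be shown to contribute $O(B^2(\log B)^{-7/6})$. For this I would split into two regimes. When $\max\{d_1,e_1\}>(\log B)^{20}$ or $\max\{d_2,e_2\}>(\log B)^{20}$, Lemma~\ref{housephoneisringingandwakingmeup} delivers this bound directly. Otherwise all four variables lie below $(\log B)^{20}$ and $(d_1,e_1)\neq(1,1)$; here the character-sum argument spelled out immediately after Lemma~\ref{housephoneisringingandwakingmeup}---based on a Siegel--Walfisz-type estimate for the quadratic twist $(\cdot/d_1)$ summed over $d_2$ running through an arithmetic progression modulo a divisor of $W$---yields the even stronger bound $O(B^2(\log B)^{-C})$ for any fixed $C$.

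The main obstacle is propagating that last character-sum estimate uniformly as the modulus ranges over all divisors of $W$, since $W$ grows with $z$ and can be as large as $\mathrm{e}^{3z}$. This uniformity is exactly what the growth condition $\{\log\c L(\mathrm{e}^{3z})\}^{5/4}\leq\log(B/(\log B)^{23})$ secures, via Lemma~\ref{lem:panolino} with $q\leq W\leq\mathrm{e}^{3z}$ and $N\asymp B/(\log B)^{23}$; this guarantees the required $(\log B)^{-C}$-type savings on every quadratic twist and in every progression simultaneously, which is what drives the final error estimate.
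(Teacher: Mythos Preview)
Your overall architecture matches the paper's: reduce via Lemma~\ref{saladthencoffee}, identify $\c M'$ and $\c M''$ with the $(d_1,e_1)=(1,1)$ and $(d_2,e_2)=(1,1)$ subsums of Lemma~\ref{lem:fridiwanchilvert}, and bound the rest. The problem is your dichotomy for ``the rest''. You read Lemma~\ref{housephoneisringingandwakingmeup} as disposing of the full range $\max\{d_1,e_1\}>(\log B)^{20}$ \emph{or} $\max\{d_2,e_2\}>(\log B)^{20}$; but that statement, taken literally, is false (it would already absorb $\c M'$ itself, which for small $W$ is of genuine size $\asymp B^2/(\log B)^{\varpi}$). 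What that lemma actually establishes---as its proof makes clear---is the bound for the range where \emph{both} maxima exceed the threshold. Consequently your regime~(b), ``all four variables below $(\log B)^{20}$'', is not the complement: it is a set of at most $(\log B)^{80}$ tuples and is trivially negligible, so no character-sum input is needed there at all.

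The genuine work, which your split omits, lies in the two asymmetric ranges: (i) $\max\{d_1,e_1\}\le (\log B)^{20}$ with $(d_1,e_1)\neq(1,1)$ and $d_2,e_2$ running up to $\asymp B$; (ii) $1<\max\{d_2,e_2\}\le (\log B)^{20}$ with $d_1,e_1$ running up to $\asymp B$. In~(i) one exploits the long sum over $d_2$ (resp.\ $e_2$) twisted by the non-principal character $(\tfrac{\cdot}{e_1})$ (resp.\ $(\tfrac{\cdot}{d_1})$)---this is the paragraph after Lemma~\ref{housephoneisringingandwakingmeup}, via the Friedlander--Iwaniec corollary, and no $\c P$-restriction is present on the long variable. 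In~(ii) the long variables $d_1,e_1$ are supported on $\c P$, so one needs Lemma~\ref{lem:panolino} instead, applied to the sum over $d_1$ twisted by $(\tfrac{\cdot}{e_2})$ (after quadratic reciprocity); this is exactly where the hypothesis $\{\log\c L(\mathrm e^{3z})\}^{5/4}\le\log(B/(\log B)^{23})$ enters, to guarantee the Siegel--Walfisz-type saving uniformly in the modulus $W/\sigma'\le \mathrm e^{3z}$ and the twist $e_2\le(\log B)^{20}$. So you have located the right two tools, but they belong to ranges~(i) and~(ii), not to your trivial regime~(b).
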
\begin{proof} By Lemma~\ref{saladthencoffee} it suffices 
to estimate $\c M_{\sigma,\tau}(B;z)$ and sum the error term over all $(\sigma,\tau) \in (\Z/W\Z)^2$.
The two sums in the present  lemma come respectively from 
the terms with $(d_1,e_1)=(1,1)$ and   $(d_2,e_2)=(1,1)$ in   Lemma~\ref{lem:fridiwanchilvert}.  It remains to deal with  the terms   terms satisfying 
$1<\max\{d_2,e_2\}\leq  (\log B)^{20}$. They contribute 
$$\ll \sum_{\substack{d_2,e_2 \leq (\log B)^{20}, \mu(d_2e_2)^2=1 \\ \gcd(d_2e_2,W)=1, (d_2,e_2) \neq (1,1)}} 
\bigg|    \sum_{\substack{ d_1 , e_1 \in \N,  \ell\mid d_1e_1\Rightarrow \ell\in \c P
  \\  d_1 d_2 \leq B/\sigma',  e_1e_2 \leq B /\tau' }}  
\frac{\mu(d_1)^2\mu(e_1)^2}{\tau(d_1)\tau(e_1)} \l(\frac{b \tau'  e_2 }{d_1}\r) \l(\frac{a \sigma'   d_2 }{e_1}\r)
(-1)^{\frac{(d_1-1)(e_1-1)}{4}}\bigg| ,$$ where the sum over $d_1,e_1$ is subject to $ \gcd(d_1 ,e_1 d_2)= \gcd( e_1, d_1 e_2)=1
$ and  $$ d_1   \equiv a\sigma/(d_2 \sigma'   ) \md { W /\sigma'  }
,e_1 \equiv b\tau/(e_2\tau' )   \md { W / \tau'   }  .$$With no loss of generality we may assume that $e_2\neq 1 $.
For fixed $d_2$, the value of   $d_1\md 8$  is fixed because $8 \mid W/\sigma'$.
If $2<p\leq z$ then the same congruence also fixes the value of the quadratic symbol  $(\frac{\tau'}{d_1})$ because $p$ divides   $W/\sigma'$.
Thus, the sum over $d_1$ becomes $$
 \sum_{\substack{ d_1  \in \N,  p\mid d_1 \Rightarrow p\in \c P
  \\  d_1  \leq B/(d_2\sigma' ),  \gcd(d_1,d_2e_1 )=1 \\
d_1   \equiv a\sigma/(d_2 \sigma'   ) \md { W /\sigma'  }
}}   \frac{\mu(d_1 )^2}{\tau(d_1 )} \l(\frac{  e_2 }{d_1}\r).$$ As above, the values of $e_2\md 4, d_1\md 4$ are fixed, thus, by quadratic reciprocity, 
we can replace $(\frac{  e_2 }{d_1})$ by $(\frac{d_1}{  e_2 })$ up to a constant sign. 
Our assumptions ensure that $W\leq \mathrm e^{3z}$. Indeed, as $z\to+\infty$ one has 
$$ \log W \leq 3\log 2+\log z +\sum_{p\leq z} (\log p+ \log z) \leq 3 z $$    by the Prime Number Theorem.
 By our assumptions $  \{ \log \c L(\mathrm e ^{3z}) \}^{5/4}\leq \log (B/(\log B)^{23} ) $ and $z\leq \log \log B$ 
we infer that $\{ \log \c L(\mathrm e ^{3z}) \}^{5/4}\leq \log (B/(d_2\sigma'))$, hence, the assumption $\exp(\{\log \c L(q)\}^{5/4} )\leq N$
of Lemma~\ref{lem:panolino} is met when $q=W/\sigma'$ and $N=B/(d_2\sigma')$. The integer $m=d_2e_1$ is at most $ B$, therefore, 
$$\omega(m) \ll \log m \ll \log B\ll  \log (B/(d_2\sigma'))\leq \exp(\sqrt{\log (B/(d_2\sigma'))}).$$ Taking $  \beta=e_2$ and using 
 that $\gcd(e_2,W)=1$ allows us to employ Lemma~\ref{lem:panolino} to infer that 
  the sum over $d_1$ is $\ll B/(d_2 (\log B)^{2024} )$. Thus, the overall contribution becomes 
$$\ll  \frac{ B}{  (\log B)^{2024} }\sum_{ d_2,e_2 \leq (\log B)^{20} }  \frac{1}{d_2} 
 \sum_{  e_1  \leq B /e_2}  1\ll  \frac{ B^2}{  (\log B)^{2000} },$$which is sufficient. \end{proof} 

We next show that if $\c P$ has density $\neq 1$ then $\c M''$ goes into the error term, while, 
if the density is $1$ one can simplify it by using Hilbert's product formula.
\begin{lemma}\label{lem:capricafedoesnottakecards}  Let $\sigma,\tau$ be as in  Lemma~\ref{saladthencoffee}.
If $\varpi \neq 1$ then
$\c M''_{\sigma ,\tau }( B;z)=O(B^2 (\log B)^{\varpi-2})$ with an absolute implied constant.
If $\varpi =1$ then        $$\c M''_{\sigma ,\tau }( B;z)=\c N''_{\sigma ,\tau }( B;z)
(a,b)_\R \prod_{\substack{  \ell\leq z  }}(\sigma,\tau)_{\Q_\ell} ,$$ where $$\c N''_{\sigma ,\tau }( B;z)
:=  \sum_{\substack{  d, e \in \N^2,  \ell\mid de\Rightarrow \ell\in \c P
  \\  d    \leq B/\sigma',  e   \leq B /\tau'  }}   \frac{\mu(d e )^2}{\tau(de) } $$          is subject to  $  d   \equiv a\sigma/\sigma'    \md { W /\sigma'  } ,
 e  \equiv b\tau/\tau'    \md { W / \tau'   }$. \end{lemma}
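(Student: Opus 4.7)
The plan is to derive a single Hilbert-symbol identity for the character factor in the summand of $\c M''$, valid for all positive coprime odd $d,e$ in the prescribed arithmetic progressions, and then to read off both assertions of the lemma from this identity. Concretely, under the restrictions that each prime dividing $de$ exceeds $z$, that $d \equiv a\sigma/\sigma' \pmod{W/\sigma'}$, and that $e \equiv b\tau/\tau' \pmod{W/\tau'}$, I claim
\begin{equation*}
\left(\frac{b\tau'}{d}\right)\left(\frac{a\sigma'}{e}\right)(-1)^{\frac{(d-1)(e-1)}{4}} = (a,b)_\R \prod_{\ell \leq z} (\sigma,\tau)_{\Q_\ell}.
\end{equation*}
Since the right-hand side is independent of $d,e$, substituting into the definition of $\c M''$ factors out this constant and leaves exactly $\c N''_{\sigma,\tau}(B;z)$, yielding the stated formula when $\varpi = 1$. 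In the case $\varpi \neq 1$ the trivial bound $|\c M''| \leq \c N''$ reduces the problem to proving $\c N'' \ll B^2(\log B)^{\varpi - 2}$, which I would obtain by a Selberg--Delange-type theorem (e.g.\ \cite[Theorem 13.2]{koukou}) applied to the multiplicative function $f(n) = \mu(n)^2/\tau(n) \mathds 1(p \mid n \Rightarrow p \in \c P)$. Definition~\ref{def:indepe}(2) (with trivial character and $\beta = 1$) supplies $\sum_{p \leq T} f(p) \log p = (\varpi/2) T + O(T/(\log T)^A)$, and multiplying the resulting one-dimensional bound $\sum_{n \leq N} f(n) \ll N(\log N)^{\varpi/2 - 1}$ for the $d$- and $e$-sums (the moduli $W/\sigma', W/\tau'$ only improving the constants) yields $\c N'' \ll B^2(\log B)^{\varpi - 2}$ uniformly.

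To prove the identity, I would first apply quadratic reciprocity between the odd coprime integers $d$ and $e$ to rewrite the left-hand side as the symmetric expression $\left(\frac{b\tau' e}{d}\right)\left(\frac{a\sigma' d}{e}\right)$. For each prime $\ell \mid d$, the tame Hilbert-symbol formula gives $(a\sigma' d, b\tau' e)_{\Q_\ell} = \left(\frac{b\tau' e}{\ell}\right)$, since $v_\ell(a\sigma' d) = 1$ and $v_\ell(b\tau' e) = 0$; by multiplicativity in $d$ (using $\mu(d)^2 = 1$) and the analogous treatment for primes dividing $e$, this yields
\begin{equation*}
\left(\frac{b\tau' e}{d}\right)\left(\frac{a\sigma' d}{e}\right) = \prod_{\ell \mid de}(a\sigma' d, b\tau' e)_{\Q_\ell}.
\end{equation*}
Hilbert's product formula then flips this product to the complementary one, and odd primes $\ell \nmid 2\sigma'\tau' de$ contribute trivially since both entries are units at such $\ell$. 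The remaining places are $\R$, $\ell = 2$, and odd $\ell \leq z$ dividing $\sigma'\tau'$. Multiplying the given congruences by $\sigma', \tau'$ gives $a\sigma' d \equiv \sigma$ and $b\tau' e \equiv \tau \pmod W$, so the periodicity of the Hilbert symbol recorded in \S\ref{s:periodicity} yields $(a\sigma' d, b\tau' e)_{\Q_\ell} = (\sigma,\tau)_{\Q_\ell}$ for each such $\ell$, while positivity of $\sigma', d, \tau', e$ forces $(a\sigma' d, b\tau' e)_\R = (a,b)_\R$. Finally $(\sigma,\tau)_{\Q_\ell} = 1$ whenever $\ell \leq z$ is odd and coprime to $\sigma'\tau'$, so the product extends over all $\ell \leq z$ and the identity is established.

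The main obstacle is the bookkeeping in the Hilbert-symbol step: correctly identifying which places contribute upon applying the product formula, handling the prime $\ell = 2$ via the specific modulus $2^{3+k_2}$ built into $W$, and using $\gcd(de, W) = 1$ both to justify the tame-symbol reduction at each $\ell \mid de$ and to preclude spurious contributions from primes $\ell \leq z$ outside the support of $\sigma\tau$. The analytic step is routine once the Selberg--Delange hypothesis is supplied uniformly by Definition~\ref{def:indepe}.
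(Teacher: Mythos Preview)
Your argument is correct, and for the case $\varpi=1$ it is actually cleaner than the paper's. The paper does not work directly with the summand of $\c M''$; instead it goes back to the origin of these terms in Lemma~\ref{lem:fridiwanchilvert} (the choice $(d_2,e_2)=(1,1)$), rewrites the character factor as $\prod_{\ell\mid s_1,\,\ell\in\c P}(\tfrac{t}{\ell})\prod_{\ell\mid t_1,\,\ell\in\c P}(\tfrac{s}{\ell})$ in the variables $s=a\sigma' s_1$, $t=b\tau' t_1$, and then \emph{uses the hypothesis $\varpi=1$} (via Definition~\ref{def:indepe}(4)) to drop the condition $\ell\in\c P$ before applying Hilbert reciprocity over the primes $\ell>z$. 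Your route bypasses this: by inserting quadratic reciprocity and reading the summand as $\prod_{\ell\mid de}(a\sigma' d,\,b\tau' e)_{\Q_\ell}$ you can invoke the product formula and the periodicity of \S\ref{s:periodicity} directly, and the identity you obtain in fact holds for \emph{every} $\varpi$, not only $\varpi=1$. So you have shown the stronger statement that $\c M''_{\sigma,\tau}(B;z)=\c N''_{\sigma,\tau}(B;z)\cdot(a,b)_\R\prod_{\ell\leq z}(\sigma,\tau)_{\Q_\ell}$ unconditionally; the lemma only records this when $\varpi=1$ because that is the only case in which $\c N''$ is of the same order as the main term.

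For $\varpi\neq 1$ the two proofs are essentially the same: bound $|\c M''|$ by $\c N''$ (in your case this is even an equality, since your constant has modulus one) and then estimate the resulting character-free sum. The paper does this via Shiu's theorem~\cite[Theorem~1]{shiu} after dropping the congruence and coprimality constraints, while you invoke a Selberg--Delange result from~\cite{koukou}; either reference yields $\sum_{n\leq N,\,p\mid n\Rightarrow p\in\c P}\mu(n)^2/\tau(n)\ll N(\log N)^{\varpi/2-1}$ and hence $\c N''\ll B^2(\log B)^{\varpi-2}$. One small remark: rather than arguing that the congruence conditions ``only improve the constants'', it is simpler (and what the paper does) to drop them entirely for the upper bound.
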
\begin{proof}  Assume that $\varpi\neq 1$. 
The sum under consideration has modulus at most     $$   \bigg(  \sum_{\substack{ n \leq B \\  \ell\mid n \Rightarrow \ell\in \c P    }}  
\frac{\mu(n )^2}{\tau(n)  }   \bigg)^2 \ll \frac{B}{(\log B)^{2-\varpi}} $$ by~\cite[Theorem 1]{shiu} and~\eqref{AssumptionsonAAA}. 
In the remaining case $\varpi=1$ we use the  second assumptions in Theorem~\ref{thm:analytic} to see that $\c P$ contains all large enough primes. 
The sum under consideration comes from the terms $(d_2,e_2)=(1,1)$ 
in the sum of Lemma~\ref{lem:fridiwanchilvert}. These terms correspond to choosing the quadratic symbol for every prime $\ell$  in the expression
$$\prod_{\substack{\ell \mid s_1 \\ \ell\in \c P   }}\l( 1+\l(\frac{b\tau't_1}{\ell}\r) \r)
\prod_{\substack{\ell \mid t_1 \\ \ell \in \c P }}\l(1+\l(\frac{a\sigma's_1}{\ell}\r) \r)
=\prod_{\substack{\ell \mid s_1 \\ \ell\in \c P   }}\l( 1+\l(\frac{t}{\ell}\r) \r)
\prod_{\substack{\ell \mid t_1 \\ \ell \in \c P }}\l(1+\l(\frac{s}{\ell}\r) \r)
$$ in the notation of~\eqref{eq:hol2} of the proof of Lemma~\ref{lem:fridiwanchilvert}. Since 
the condition $\ell\mid s_1,\ell\in \c P$ is the same as 
$\ell \mid s, \ell>z, \ell\in \c P  $, the product becomes 
$$\prod_{\substack{\ell \mid s, \ell>z \\ \ell\in \c P   }} \l(\frac{t}{\ell}\r)  
\prod_{\substack{\ell \mid t, \ell>z \\ \ell \in \c P }} \l(\frac{s}{\ell}\r)  .$$ Using once again the fact that 
all    primes not in $\c P$   are bounded by $z$,  this turns into  
$$  \prod_{\substack{\ell \mid s \\ \ell>z  }} \l(\frac{t}{\ell}\r) 
\prod_{\substack{\ell \mid t \\ \ell >z }} \l(\frac{s}{\ell}\r) =\prod_{\substack{\ell \mid st \\ \ell>z  }}(s,t)_{\Q_\ell}=
\prod_{\substack{  \ell>z  }}(s,t)_{\Q_\ell}
 .$$ In the first equality we used that there are no   primes $>z$ dividing both $s,t$
and in the second we used that $(s,t)_{\Q_\ell}=1$ for all $\ell \nmid st$ with $\ell >z \geq 2$. 
Since $as,bt> 0 $ we infer that   $(s,t)_\R=(a,b)_\R$, hence, by Hilbert's product formula we obtain
$$ \prod_{\substack{  \ell >  z  }}(s,t)_{\Q_\ell}
=(s,t)_\R
\prod_{\substack{  \ell\leq z  }}(s,t)_{\Q_\ell}=(a,b)_\R\prod_{\substack{  \ell\leq z  }}(\sigma,\tau)_{\Q_\ell}
,$$ where we used $(s,t)\equiv (\sigma, \tau ) \md W$ and the periodicity of the Hilbert symbol in \S\ref{s:periodicity}.\end{proof} 
 \subsection{The end}\label{lem:joujou2} The next lemma deals with the characters that `correlate' with $\c P$.
\begin{lemma}\label{lem:blacksheepcoffee} Assume that $\varpi \neq 1 $ and let    $\psi$ be a non-principal Dirichlet 
character modulo $q$ with   $ c_\psi(1)  \neq 0 $. Then for all $m\in \N$  and all $T$ with    
  $\log T \geq \max\l \{(\log \c L(q))^{\frac{4}{1-\varpi}},\omega(m)^{1/10}\r\} $ we have  
$$\sum_{\substack{ n\leq T\\\gcd(n,m)=1  }} \frac{\mu(n)^2\psi(n)}{2^{\#\{\ell\in \c P: \ell \mid n \}}}
\ll \frac{T}{(\log T)^{1/2}},$$ where the implied constant depends at most on $\c P$.
\end{lemma}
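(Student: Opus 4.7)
The plan is to set
\[f(n) := \frac{\mu(n)^2 \psi(n)}{2^{\#\{\ell \in \c P : \ell \mid n\}}} \mathds 1(\gcd(n,m)=1),\]
which is a multiplicative function supported on squarefree integers coprime to $m$, and to estimate $\sum_{n\leq T} f(n)$ via a Selberg--Delange type theorem, in the spirit of the proof of Lemma~\ref{lem:panolino}. The only substantive input required is the mean value of $f$ on primes.

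\textbf{Mean value on primes.} On primes, $f(p) = \tfrac{1}{2}\psi(p)$ for $p \in \c P$ with $p \nmid m$, $f(p) = \psi(p)$ for $p \notin \c P$ with $p \nmid m$, and $f(p) = 0$ otherwise. Splitting accordingly,
\[\sum_{p \leq T} f(p) \log p = \sum_{p \leq T} \psi(p) \log p - \tfrac{1}{2} \sum_{\substack{p \leq T \\ p \in \c P}} \psi(p) \log p + O\!\big(\omega(m) \log T\big).\]
The first sum is $\ll T/(\log T)^A$ by Siegel--Walfisz, valid because $\psi$ is non-principal and the hypothesis $\log T \geq (\log \c L(q))^{4/(1-\varpi)}$ places $T$ very far beyond the conductor. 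Assumption (2) of Definition~\ref{def:indepe} with $\beta = 1$ and $x = T \geq \c L(q)$ gives
\[\sum_{\substack{p \leq T \\ p \in \c P}} \psi(p) \log p = c_\psi(1)\, T + O\!\big(T/(\log T)^A\big).\]
The remaining defect is absorbed since $\omega(m)^{1/10} \leq \log T$ forces $\omega(m) \log T \leq (\log T)^{11}$. Collecting everything,
\[\sum_{p \leq T} f(p) \log p = -\tfrac{c_\psi(1)}{2}\, T + O\!\big(T/(\log T)^A\big).\]

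\textbf{Bounding the exponent.} Setting $\kappa := -c_\psi(1)/2$, the trivial bound $|\psi(p)| \leq 1$ and the prime number theorem yield $|c_\psi(1)| \leq 1$, hence $\mathrm{Re}(\kappa) \leq 1/2$. Feeding the above mean-value estimate into~\cite[Theorem 13.2, Remark 13.3]{koukou}, exactly as in Lemma~\ref{lem:panolino} but with exponent $\kappa$ in place of $0$, produces
\[\sum_{n \leq T} f(n) \ll T\, (\log T)^{\mathrm{Re}(\kappa) - 1} \ll T/(\log T)^{1/2},\]
since $\mathrm{Re}(\kappa) - 1 \leq -1/2$.

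\textbf{Main obstacle.} The delicate point is calibrating the range of validity of the Selberg--Delange theorem. In Lemma~\ref{lem:panolino} the exponent was $\kappa = 0$ and the softer range $\log T \geq (\log \c L(q))^{5/4}$ sufficed, whereas here $\mathrm{Re}(\kappa)$ can be as large as $1/2$, which forces a stronger range for the error terms in~\cite[Theorem 13.2]{koukou} to beat $(\log T)^{-1/2}$; the precise exponent $4/(1-\varpi)$ is engineered to accommodate both the $\kappa$ shift and the $\varpi$-dependent density of $\c P$ that propagates through the Euler-product comparisons. Once this calibration is verified the rest is routine bookkeeping of error terms.
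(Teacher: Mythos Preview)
Your overall strategy matches the paper's: define the same multiplicative $f$, compute its prime mean value to be $-\tfrac{c_\psi(1)}{2}$, and feed this into a Selberg--Delange theorem from~\cite{koukou}. However, there is a genuine gap in the exponent bookkeeping.

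You bound $|c_\psi(1)|\le 1$ via the full prime number theorem and deduce $\Re(\kappa)\le 1/2$. But the Selberg--Delange machinery (the paper uses~\cite[Equation~(13.11), Lemma~13.5(a)]{koukou} here rather than Theorem~13.2) does not output a clean $T(\log T)^{\Re(\kappa)-1}$: it carries a factor $(\log Q)^2$, where $Q$ governs the range in which the prime asymptotic holds (here $Q\asymp \max\{\c L(q),(\log T)^{20}\}$). Concretely one gets
\[
\sum_{n\le T} f(n)\ \ll\ \frac{T(\log Q)^2}{(\log T)^{1-\Re(\kappa)}},
\]
so one must absorb $(\log Q)^2$ into the exponent. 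If $\Re(\kappa)$ could genuinely equal $1/2$ there would be no room whatsoever, and your ``routine bookkeeping'' claim breaks down: no choice of range $\log T\ge (\log\c L(q))^{C}$ would suffice.

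The missing observation is that the density of $\c P$ forces the sharper bound $|c_\psi(1)|\le\varpi$. Indeed,
\[
\Bigl|\sum_{\substack{p\le x\\ p\in\c P}}\psi(p)\log p\Bigr|\ \le\ \sum_{\substack{p\le x\\ p\in\c P}}\log p\ =\ \varpi x+o(x),
\]
so $\Re(\kappa)\le\varpi/2<1/2$. With this gap of $(1-\varpi)/2$ in hand, the paper takes $\epsilon=(3+\varpi)/(1-\varpi)$ so that $1+\epsilon=4/(1-\varpi)$; then the hypothesis $\log T\ge(\log\c L(q))^{4/(1-\varpi)}$ gives $\log Q\le(\log T)^{(1-\varpi)/4}$, whence $(\log Q)^2\le(\log T)^{(1-\varpi)/2}$ and the net exponent is at least $1-\varpi/2-(1-\varpi)/2=1/2$. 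This is precisely why the exponent $4/(1-\varpi)$ appears, and it cannot be recovered from $|c_\psi(1)|\le 1$ alone.
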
\begin{proof}Defining the multiplicative function $$ f(n) =  \frac{\mu(n)^2\psi(n)}{2^{\#\{\ell\in \c P: \ell \mid n \}}}\mathds 1(\gcd(n,m=1)$$
we have $$ \sum_{p\leq H} f(p) \log p =\sum_{p\leq H} \psi(p) +O(\omega(m) \log H)$$ with an absolute implied constant. Assume that 
$H\geq (\log T)^{20}$ so that our assumption on  $\omega(m)$ 
leads to  $\omega(m) \log H \ll H^{3/4}$. By the Siegel--Walfisz theorem~\eqref{AssumptionsonAAA} with $\beta=1$ we infer that if $H\geq \c L(q)$ then 
for all fixed $A>0$ this is $$ -\frac{c_\psi(1)}{2} H+O\l(\frac{H}{(\log H)^{A}}\r)$$  with an   implied constant depending at most on $A$ and $\c P$.
We now apply~\cite[Equation (13.11), Lemma 13.5 (a)]{koukou} with $$ Q=\max\l\{\c L(q),(\log T)^{20}\r\}, k=J=1, \kappa= -\frac{c_\psi(1)}{2}, \epsilon=
\frac{3+\varpi}{1-\varpi}$$ to infer that when  $\log T\geq (\log Q)^{1+\epsilon}$ then $$ \sum_{n\leq T} f(n) \ll 
\frac{T(\log Q)^2}{(\log T)^{1+\Re(c_\psi(1))/2 }},$$ with an implied constant that depends at most on 
 $\c P$ and $c_\psi(1)$. Since the $\psi$ with $c_\psi(1)\neq 0$ are determined only by $\c P$, the implied constant 
therefore depends only on  $\c P$. In light of   $\log Q \leq (\log T)^{1/(1+\epsilon)}$ we obtain the bound 
$O(T (\log T)^{-\lambda})$ where $\lambda= 1+\frac{\Re(c_\psi(1))}{2}-\frac{2}{1+\epsilon}$.
Using~\eqref{AssumptionsonAAA} twice yields $$ |c_\psi(1)| x +O\l(\frac{x}{\log x} \r) = \l|\sum_{\substack{ p\leq x \\ p\in \c P }} 
\psi(p)\left(\frac{p}{\beta}\right) \log p\r|\leq \sum_{\substack{ p\leq x \\ p\in \c P }}   \log p\leq \varpi x +O\l(\frac{x}{\log x} \r),$$ hence, 
$|\Re(c_\psi(1) )| \leq \varpi$. This means that $$\lambda \geq 1-\frac{\varpi}{2}-\frac{2}{1+\epsilon}   \geq \frac{1}{2} $$
due to  $ \epsilon  =  \frac{ 3+\varpi}{1-\varpi} $. The proof concludes by observing that $\log T \geq (\log Q)^{1+\epsilon}$ is satisfied 
due to our assumption  $\log T \geq (\log \c L(q))^{\frac{4}{1-\varpi}}$.\end{proof}

\begin{lemma}[Equidistribution inside progressions]\label{guitarbream} 
There exists $\theta>0$ that only depends on $\c P$ such that for all  $\sigma, \sigma', \tau,\tau'$ as in Lemma~\ref{lem:fridiwanchilvert}
and all $B $ satisfying    $$\log B \geq \max\l\{ \mathds 1 (\varpi \neq 1 )(\log \c L(\mathrm e^{3z} ))^{\frac{8}{1-\varpi}}, \mathrm e ^z\r\}$$
  we have  $$\c M'_{\sigma ,\tau }( B;z)= \frac{\mathfrak S_W}{\Gamma(1-\varpi/2)^2}\frac{B^2}{(\log B )^{\varpi}  } +O\l(\frac{B^2}{(\log B )^{\varpi+\theta}  } \r)
,$$ where the implied constant depends only on $\c P$. The constant $\mathfrak S_W$ is defined as$$\mathfrak S_W:= \frac{1}{\phi(W)^2 }
  \prod_{p} \l(1+\frac{\mathds 1(p>z )}{p2^{\mathds 1_{\c P}(p) }}\r)^2 \l(1-\frac{1}{p}\r)^{2-\varpi}
\hspace{-0.5cm}
\sum_{\substack{   \delta \in \N \\\gcd( \delta,W)=1  } }  \frac{\mu( \delta)\delta^{-2}}{ 4^{\#\{ \ell\in \c P: \ell \mid  \delta   \}} }
  \prod_{p\mid \delta} \l(1+\frac{1}{p2^{\mathds 1_{\c P}(p) }}\r)^{-2}.$$
\end{lemma}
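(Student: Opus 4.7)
The plan is to reduce to one-variable Selberg--Delange estimates in arithmetic progressions and combine them. Introduce the multiplicative function $g(n):= \mu(n)^2 / 2^{\#\{\ell \in \c P:\ell \mid n\}}$, so that the summand of $\c M'_{\sigma,\tau}$ equals $g(d)g(e)\mathds 1(\gcd(d,e)=1)$. I would first remove the coprimality using $\mathds 1(\gcd(d,e)=1)=\sum_{\delta \mid \gcd(d,e)}\mu(\delta)$ and substitute $d=\delta d'$, $e=\delta e'$; since $a\sigma/\sigma'$ and $b\tau/\tau'$ are each coprime to the respective moduli $W/\sigma'$ and $W/\tau'$, only $\delta$ with $\gcd(\delta,W)=1$ survive, yielding
$$
\c M'_{\sigma,\tau}(B;z)=\sum_{\substack{\delta\geq 1 \\ \gcd(\delta,W)=1}}\mu(\delta) g(\delta)^2\, S_\sigma(B/(\sigma'\delta);\delta)\,S_\tau(B/(\tau'\delta);\delta),
$$
where $S_\sigma(X;\delta)$ is the sum of $g(d')$ over $d'\leq X$ with $\gcd(d',\delta)=1$ and $d'$ lying in a fixed invertible residue class modulo $W/\sigma'$, and analogously for $S_\tau$.

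The core task is an asymptotic for $S_\sigma(X;\delta)$ via the Selberg--Delange method. Expand $S_\sigma$ by orthogonality of Dirichlet characters modulo $W/\sigma'$. Assumption~(2) of Definition~\ref{def:indepe} applied with $\beta=1$ and trivial $\psi$ yields $\sum_{p\leq x}g(p)\log p=(1-\varpi/2)x+O(x/(\log x)^A)$, so $\sum_n g(n) n^{-s}$ factors as $\zeta(s)^{1-\varpi/2}$ times an Euler product holomorphic in a standard zero-free region. Quantitative Selberg--Delange, in the form of \cite[Theorem 13.2]{koukou}, then produces for the principal character the main term $X(\log X)^{-\varpi/2}/(\Gamma(1-\varpi/2)\phi(W/\sigma'))$ times a convergent Euler product, with an error saving a small power of $\log X$. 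Non-principal characters $\psi$ are handled in two regimes: when $c_\psi(1)=0$ the same framework applies with exponent $-c_\psi(1)/2=0$, giving a $(\log X)^{-A}$ saving; when $c_\psi(1)\neq 0$ one invokes Lemma~\ref{lem:blacksheepcoffee} if $\varpi\neq 1$, and (noting that $\c P$ then contains all large enough primes, so that $g$ differs from $\mu^2/\tau$ only on finitely many bounded primes, which can be absorbed into the coprimality parameter $m$) Lemma~\ref{lem:panolino} if $\varpi=1$. The Chebyshev estimate $\log W \leq 3z$ together with the hypothesis $\log B \geq (\log \c L(\mathrm e^{3z}))^{8/(1-\varpi)}$ ensures these bounds hold uniformly over moduli dividing $W$.

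I would then multiply the two principal-character main terms of $S_\sigma$ and $S_\tau$. The length factor $1/(\sigma'\tau'\delta^2)$ combines with $1/(\phi(W/\sigma')\phi(W/\tau'))=\sigma'\tau'/\phi(W)^2$, using that $W/\sigma'$ and $W/\tau'$ are both supported on all primes $\leq z$, to give a clean $1/(\phi(W)^2\delta^2)$ and remove all dependence on $(\sigma,\tau)$. The two Euler products from Selberg--Delange, combined with correction factors $\prod_{p\mid W/\sigma'}(1+g(p)/p)^{-1}$ and $\prod_{p \mid \delta}(1+g(p)/p)^{-1}$ arising from the coprimality conditions, rearrange (using $g(p)=1/2^{\mathds 1_{\c P}(p)}$) into $\prod_p(1+\mathds 1(p>z)/(p\cdot 2^{\mathds 1_{\c P}(p)}))^2(1-1/p)^{2-\varpi}$. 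The sum over $\delta$, using $\mu(\delta)g(\delta)^2 = \mu(\delta)/4^{\#\{\ell\in\c P:\ell\mid\delta\}}$, then matches exactly the arithmetic factor in $\mathfrak S_W$.

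For the error, I would truncate the $\delta$-sum at $\delta\leq (\log B)^C$ (the tail being negligible by the trivial bound $g\leq 1$), and the character-sum savings sum over the retained $\delta$ to yield a total error $O(B^2/(\log B)^{\varpi+\theta})$ for some $\theta>0$ depending only on $\c P$. The chief obstacle is quantitative uniformity in the modulus $W/\sigma'$, which can grow with $z$: the hypothesis on $\log B$ is precisely calibrated so that $W\leq \mathrm e^{3z}$ lies within the effective ranges of Definition~\ref{def:indepe}(2) and of Lemmas~\ref{lem:panolino} and~\ref{lem:blacksheepcoffee}. A secondary, purely algebraic, obstacle is the Euler-product bookkeeping needed to identify the accumulated main term with the stated $\mathfrak S_W$.
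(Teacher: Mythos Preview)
Your proposal is correct and follows essentially the same route as the paper: M\"obius inversion on $\gcd(d,e)$ with truncation of the $\delta$-sum, orthogonality of characters modulo $W/\sigma'$ and $W/\tau'$, Selberg--Delange (via \cite[Theorem 13.2]{koukou}) for the principal character, and Lemma~\ref{lem:blacksheepcoffee} for non-principal $\psi$ with $c_\psi(1)\neq 0$. One small simplification you missed: when $\varpi=1$ the set $\c P$ contains all large primes, so Siegel--Walfisz forces $c_\psi(1)=0$ for every non-principal $\psi$, and the subcase ``$c_\psi(1)\neq 0$ with $\varpi=1$'' is vacuous---there is no need to invoke Lemma~\ref{lem:panolino} there (and indeed that lemma is tailored to quadratic characters $(\cdot/\beta)$, not general Dirichlet characters modulo $W$).
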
\begin{proof} Using M\"obius inversion to detect the coprimality of $d,e$ gives 
\beq{eq:ki8ara}{\sum_{\substack{   \eta \in \N , \eta \leq (\log B)^2\\\gcd( \eta,W)=1  } } \frac{\mu( \eta)}{4^{\#\{ \ell\in \c P: \ell \mid  \eta   \}} }
\sum_{\substack{    d \leq B/ (\eta\sigma'), \gcd(d, \eta)=1 \\ d   \equiv a\sigma/( \eta\sigma' )   \md { W /\sigma'  } 
 }}  \frac{\mu( d    )^2}{2^{\#\{ \ell\in \c P: \ell \mid d   \}} }  \sum_{\substack{     e  \leq B /( \eta\tau'), \gcd(e, \eta)=1
\\ e   \equiv b\tau/( \eta\tau')    \md { W / \tau'   }}}     \frac{\mu(  e  )^2}{2^{\#\{ \ell\in \c P: \ell \mid    e \}} }
 } up to an error term of size $\ll B^2 (\log B)^{-2}$. This is because  the contribution of $\eta> (\log B)^2$ is  
$$\ll\sum_{\eta>(\log B)^2}\frac{B^2}{\eta^2} \ll \frac{B^2}{(\log B)^2}.$$When $\eta\leq (\log B)^2$
we estimate the sum over $e$ by using orthogonality of characters to express it as 
\beq{eq:auriopetaei}{\frac{1}{\phi(W/\tau')}\sum_{\psi\md{W / \tau'  }}\overline{\psi(b\tau/( \eta\tau') )}\sum_{\substack{     e  \leq B /( \eta\tau')
\\  \gcd(e, \eta)=1  }}     \frac{\mu(  e  )^2\psi(e) }{2^{\#\{ \ell\in \c P: \ell \mid    e \}} }
.} We will  work in the   case    $\varpi \neq 1 $ for now and at the end we will describe the changes needed for the case $\varpi=1$.
Let us bound the contribution of non-principal characters in the sum over $\psi$. If $c_\psi(1) =0 $ then 
an argument similar to the one in Lemma~\ref{lem:ptzno} shows that the contribution is negligible.
If $\psi$ is non-principal and  $c_\psi(1) \neq 0 $ then the statements       $\varpi\neq 1$ and $\omega(\eta)\ll \log \eta \ll \log \log B$
allow us   to allude to Lemma~\ref{lem:blacksheepcoffee} to deduce that when 
  $\log B \geq (\log \c L(q))^{\frac{8}{1-\varpi}}$, one has 
$$ \sum_{\substack{     e  \leq B /( \eta\tau')
\\  \gcd(e, \eta)=1  }}     \frac{\mu(  e  )^2\psi(e) }{2^{\#\{ \ell\in \c P: \ell \mid    \delta \}} } \ll \frac{B}{\eta \sqrt{\log B}}$$ with an implied constant 
depending only on $\c P$. Thus, for $\eta$ as in~\eqref{eq:ki8ara} we have $$\sum_{\substack{     e  \leq B /( \eta\tau'), \gcd(e, \eta)=1
\\ e   \equiv b\tau/( \eta\tau')    \md { W / \tau'   }}}     \frac{\mu(  e  )^2}{2^{\#\{ \ell\in \c P: \ell \mid   e \}} }
=\frac{1}{\phi(W/\tau')}\sum_{\substack{     e  \leq B /( \eta\tau')
\\  \gcd(e, \eta W)=1  }}     \frac{\mu(  e  )^2  }{2^{\#\{ \ell\in \c P: \ell \mid  e \}} }
+O\l(  \frac{B}{\eta \sqrt{\log B}}\r)$$ with an implied constant 
depending only on $\c P$. We used  the definition of $\tau'$ to replace the information  that $e$ is coprime to $W/\tau'$ by $\gcd(e,W)=1$.
To estimate the sum over $e$ in the right-hand side we use the multiplicative function  $$ f(n)= 
\frac{\mu( n   )^2  }{2^{\#\{ \ell\in \c P: \ell \mid    n \}} }\mathds 1(\gcd(n, \eta W)=1  )   .$$ Using $W\leq \mathrm e^{3z}$ and the 
 properties  $\eta\leq (\log B)^2 $ and $z\leq \log \log B$ shows that 
$\omega(\eta W)\ll  \log \log B$ with an absolute implied constant. Hence, for $H\geq Q:= (\log \log B)^2$ we obtain 
$$ \sum_{p\leq H} f(p) \log p=\l(1-\frac{\varpi}{2} \r) H +O\l(\frac{H}{(\log H)^A}\r) $$ for any fixed $A>0$
with the implied constant depending only on $A$ and $\c P$. We now allude to~\cite[Equation (13.11)]{koukou} 
with $k=J=1$, $\kappa=1-\varpi/2$ and $\epsilon=3$. 
It shows that \beq{eq:elbetikasokolatakia}{\sum_{\substack{     e  \leq B /( \eta\tau')
\\  \gcd(e, \eta W)=1  }}     \frac{\mu(  e  )^2  }{2^{\#\{ \ell\in \c P: \ell \mid  e \}} }
=\frac{\widetilde c (\eta) }{ \eta\tau'} \frac{B }{(\log (B /( \eta\tau')))^{\varpi/2}}+O\l(\frac{B(\log Q)^2}{\eta(\log B)^{\varpi/2+1}}\r)
,} where $$ \widetilde{c}(\eta)= \prod_{p } \l(1+\frac{\mathds 1(p\nmid \eta W)}{p2^{\mathds 1_{\c P}(p) }}\r) \l(1-\frac{1}{p}\r)^{1-\varpi/2}
$$ and the implied constant depends at most on $\c P$. 
Note that $\gcd(\eta,W)=1 $ gives \beq{eq:xaxakes}{ \widetilde{c}(\eta)
\ll  \prod_{p\mid \eta W } \l(1+\frac{1}{p}\r)^{O_\varpi(1)}\ll (\log \log \eta )^{O_\varpi(1)}(\log z) ^{O_\varpi(1)},} where the implied constants
depend at most on $\c P$. Our assumption $   z \leq \log \log B$ ensures that $W\leq \mathrm e ^{3z} \leq (\log B)^3$ 
as in the proof of Lemma~\ref{lem:ptzno}, hence, 
$$(\log B/(\eta \tau'))^{-\varpi/2}=(\log B)^{-\varpi/2} \l(1+O\l(\frac{\log \log B}{\log B}\r) \r).$$ 
Hence,  using  $\tau' \phi(W/\tau')=  \phi(W)$ we   can  obtain   
\beq{eq:bluesmanblues}{\sum_{\substack{     e  \leq B /( \eta\tau'), \gcd(e, \eta)=1
\\ e   \equiv b\tau/( \eta\tau')    \md { W / \tau'   }}}     \frac{\mu(  e  )^2}{2^{\#\{ \ell\in \c P: \ell \mid   e \}} }
= \frac{\widetilde{c}(\eta) }{ \phi(W ) \eta} \frac{B }{(\log B)^{\varpi/2}}
+O\l(  \frac{(1+\widetilde c(\eta))  B}{ \eta\sqrt{ \log B}}\r).}The contribution of the error term towards~\eqref{eq:ki8ara} is 
$$\ll \frac{ B}{ \sqrt{ \log B}} \sum_{\substack{    \eta \leq (\log B)^2\\\gcd( \eta,W)=1  } }   \frac{(1+\widetilde c(\eta))   }{ \eta }
\sum_{\substack{    d \leq B/ (\eta\sigma'), \gcd(d, \eta)=1 \\ d   \equiv a\sigma/( \eta\sigma' )   \md { W /\sigma'  } 
 }}  \frac{\mu( d    )^2}{2^{\#\{ \ell\in \c P: \ell \mid d   \}} }  $$ which, by~\eqref{eq:bluesmanblues}, is at most 
$$\ll \frac{ B}{ \sqrt{ \log B}} \sum_{\substack{    \eta \leq (\log B)^2\\\gcd( \eta,W)=1  } }   \frac{(1+\widetilde c(\eta))   }{ \eta }
\l\{\frac{\widetilde{c}(\eta) }{ \phi(W ) \eta} \frac{B }{(\log B)^{\varpi/2}}
+   \frac{(1+\widetilde c(\eta))  B}{ \eta\sqrt{ \log B}} \r\}\ll    \frac{B^2 (\log z)^{O_\varpi(1)} }{  (\log B)^{(1+\varpi)/2}}. $$
This is acceptable because $z\leq \log \log B$ and $(1+\varpi)/2 >\varpi$.
 The main term in~\eqref{eq:bluesmanblues} contributes towards~\eqref{eq:ki8ara} the quantity 
$$  \frac{B }{\phi(W ) (\log B)^{\varpi/2}} 
\sum_{\substack{   \eta \in \N , \eta \leq (\log B)^2\\\gcd( \eta,W)=1  } } \frac{\mu( \eta)\widetilde{c}(\eta) }{\eta 4^{\#\{ \ell\in \c P: \ell \mid  \eta   \}} }
\sum_{\substack{    d \leq B/ (\eta\sigma'), \gcd(d, \eta)=1 \\ d   \equiv a\sigma/( \eta\sigma' )   \md { W /\sigma'  } 
 }}  \frac{\mu( d    )^2}{2^{\#\{ \ell\in \c P: \ell \mid d   \}} }  
,$$ which can be estimated by invoking~\eqref{eq:bluesmanblues} once again. The ensuing error term is 
$$\ll \frac{B ^2}{\phi(W ) (\log B)^{(1+\varpi)/2}} 
\sum_{\substack{   \eta \in \N , \eta \leq (\log B)^2\\\gcd( \eta,W)=1  } } \frac{(1+\widetilde c(\eta))^2 }{\eta^2  } 
\ll \frac{B ^2(\log z)^{O_\varpi(1)}}{(\log B)^{(1+\varpi)/2}}  .$$ The main term is 
$$\frac{B^2 }{\phi(W )^2 (\log B)^{\varpi }} 
\sum_{\substack{   \eta \in \N , \eta \leq (\log B)^2\\\gcd( \eta,W)=1  } } \frac{\mu( \eta)\widetilde{c}(\eta)^2 }{\eta^2 4^{\#\{ \ell\in \c P: \ell \mid  \eta   \}} }
.$$Completing the summation over $\eta$ produces an error term of size 
$$\ll \frac{B^2 }{ (\log B)^{\varpi }} 
\sum_{\substack{   \eta >(\log B)^2  } } \frac{\mu( \eta)\widetilde{c}(\eta)^2 }{\eta^2 4^{\#\{ \ell\in \c P: \ell \mid  \eta   \}} }
\ll \frac{B^2(\log z)^{O_\varpi(1)} }{ (\log B)^{2+\varpi }},
$$ which is acceptable. The resulting main term is then seen to be  
 $$\frac{B^2}{(\log B )^{\varpi}  } \frac{1}{\Gamma(1-\varpi/2)^2\phi(W)^2 }
\sum_{\substack{  \gcd( \delta,W)=1  } } 
\frac{\mu( \delta)}{ \delta^{2}4^{\#\{ \ell\in \c P: \ell \mid  \delta   \}} }
  \prod_{p} \l(1+\frac{\mathds 1(p\nmid \delta  W )}{p2^{\mathds 1_{\c P}(p) }}\r)^2 \l(1-\frac{1}{p}\r)^{2-\varpi}
, $$ that can be factored as stated in the lemma.

Lastly,   when $\varpi=1$ we know that all but finitely many primes are in    $\c P$, hence, by the Siegel--Walfisz theorem
one has $c_\psi(1)=0$ for all non-principal $\psi$. Hence, we only need to work with $\psi=1$ in~\eqref{eq:auriopetaei}. 
For this, one can follow similar steps as in our proof of~\eqref{eq:elbetikasokolatakia} to produce admissible error terms.
Note that the saving $1/\sqrt{\log B}$ in~\eqref{eq:bluesmanblues} would not be satisfactory, however, it comes only from the 
terms with $c_\psi(1)\neq 0$ and non-principal $\psi$ that are not relevant in the present case.\end{proof}

\begin{lemma}\label{eq:glenngould}For any fixed $A>0$ we have 
$$\mathfrak S_W=\frac{ 1+O((\log z)^{-A})}{W^2} \prod_{p\leq z }\l(1-\frac{1}{p}\r)^{-\varpi},
$$ where the implied constant depends at most on $A$ and $\c P$.\end{lemma}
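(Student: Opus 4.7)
The plan is to rewrite $\mathfrak{S}_W$ as a single Euler product and compare its local factors to $(1-1/p)^{-\varpi}$. Since the summand over $\delta$ coprime to $W$ is multiplicative and supported on primes $p>z$, the $\delta$-sum collapses to
\[
\prod_{p>z}\l(1-\frac{p^{-2}\,4^{-\mathds 1_{\c P}(p)}}{\bigl(1+\tfrac{1}{p\,2^{\mathds 1_{\c P}(p)}}\bigr)^{2}}\r).
\]
Multiplying by the outer Euler product and using the algebraic identity $(1+x)^{2}-x^{2}=1+2x$ with $x=1/(p\,2^{\mathds 1_{\c P}(p)})$, the local factor at each $p>z$ telescopes to $(1-1/p)^{2-\varpi}\bigl(1+\tfrac{2}{p\,2^{\mathds 1_{\c P}(p)}}\bigr)$, while at each $p\le z$ it simplifies to $(1-1/p)^{2-\varpi}$. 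Combining with the identity $W^{2}/\phi(W)^{2}=\prod_{p\le z}(1-1/p)^{-2}$ gives
\[
W^{2}\mathfrak{S}_{W}=\prod_{p\le z}(1-1/p)^{-\varpi}\,\cdot\,\prod_{p>z}(1-1/p)^{2-\varpi}\l(1+\tfrac{2}{p\,2^{\mathds 1_{\c P}(p)}}\r).
\]
This reduces the lemma to showing that the second, tail product above equals $1+O((\log z)^{-A})$.

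Taking logarithms and Taylor expanding, the $O(1/p^{2})$ remainders contribute $O(1/z)$; the $1/p$ terms, after a case split on whether $p\in\c P$ (in which case the $p$-th summand is $-(1-\varpi)/p$) or $p\notin\c P$ (when it equals $\varpi/p$), combine into $\sum_{p>z}(\varpi-\mathds 1_{\c P}(p))/p$. This tail sum is a pure equidistribution quantity for $\c P$, which I would control by invoking hypothesis~(2) of Definition~\ref{def:indepe} with $\psi=1$ and $\beta=1$, yielding $\sum_{p\le x,\,p\in\c P}\log p=\varpi x+O_{A}(x(\log x)^{-A-2})$. Abel summation then upgrades this to
\[
\sum_{p\le x,\,p\in\c P}\frac{1}{p}=\varpi\log\log x+C_{\c P}+O_{A}\bigl((\log x)^{-A}\bigr);
\]
subtracting Mertens' theorem and forming the difference over $z<p\le x$ with $x\to\infty$ yields $\sum_{p>z}(\varpi-\mathds 1_{\c P}(p))/p=O_{A}((\log z)^{-A})$ uniformly, and exponentiating returns the lemma.

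The main obstacle is the bookkeeping of uniformity in the partial summation: the statement demands an implied constant depending only on $A$ and $\c P$, not on $z$ or on individual primes. This is delivered because hypothesis~(2) already provides arbitrary power-logarithmic savings with constants of exactly the required type, and because the ambient constants $C_{\c P}$ and Mertens' constant drop out when forming tails. No additional analytic machinery is needed; in particular, the case $\varpi=1$ requires no separate treatment, since hypothesis~(4) then guarantees that $\c P$ contains all sufficiently large primes and the relevant tail sum is already zero past a fixed point.
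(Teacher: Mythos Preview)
Your argument is correct and arrives at the same reduction as the paper: both proofs come down to showing that $\sum_{p>z}(\varpi-\mathds 1_{\c P}(p))/p\ll_A(\log z)^{-A}$ via partial summation and hypothesis~(2) of Definition~\ref{def:indepe}. The only cosmetic difference is that the paper bounds the $\delta$-sum crudely by $1+O(1/z)$ (since any $\delta\neq 1$ coprime to $W$ exceeds $z$) and then takes logarithms of the remaining product, whereas you evaluate the $\delta$-sum exactly as an Euler product and telescope it against the outer product via $(1+x)^2-x^2=1+2x$ before taking logarithms; either way the resulting tail product has logarithm $\sum_{p>z}(\varpi-\mathds 1_{\c P}(p))/p+O(1/z)$.
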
\begin{proof}
Each $\delta\neq 1 $ in the definition of $\mathfrak S_W$  exceeds $z$ since it is coprime to $W$.
Since each term in the sum over $\delta$ is $\ll \delta^{-2}$,  we obtain  
$$\sum_{\substack{   \delta \in \N \\\gcd( \delta,W)=1  } }  \frac{\mu( \delta)\delta^{-2}}{ 4^{\#\{ \ell\in \c P: \ell \mid  \delta   \}} }
  \prod_{p\mid \delta} \l(1+\frac{1}{p2^{\mathds 1_{\c P}(p) }}\r)^{-2}
=1+O\l(\sum_{\delta>z} \frac{1}{\delta^{2}}\r)=1+O(\delta^{-1}).$$  The terms $p>z$ in the product over $p$ of $\mathfrak S_W$ contribute
$$  \prod_{p>z} \l(1+\frac{1}{p2^{\mathds 1_{\c P}(p) }}\r)^2 \l(1-\frac{1}{p}\r)^{2-\varpi}=
\exp\l(\sum_{p>z} \frac{2^{1-\mathds 1 _\c P(p)}-2+\varpi}{p}+O\l(\frac{1}{p^2}\r) \r),$$ whose logarithm is 
$$ \sum_{p>z} \l(\frac{\varpi}{p}-\frac{\mathds 1_\c P(p)}{p}\r) +O\l(\frac{1}{z}\r)\ll \frac{1}{(\log z)^A}$$ by partial summation 
and~\eqref{AssumptionsonAAA}. \end{proof}

\begin{lemma}\label{lem:thanksforthecoffee} Assume that $\varpi=1$ and fix any $A>0$.
There exists $\varrho>0$ that only depends on $\c P$ such that for all 
$\sigma, \tau, \sigma',\tau',B,z$ as in Lemma~\ref{guitarbream},  we have 
$$\c N''_{\sigma ,\tau }( B;z)=\frac{1}{\Gamma(1/2)^2} \frac{ 1+O((\log z)^{-A})}{W^2} \prod_{p\leq z }\l(1-\frac{1}{p}\r)^{-1}
\frac{B^2}{\log B    } +O\l(\frac{B^2}{(\log B )^{1+\varrho}  } \r),$$
where the implied constants depends   only  on $A$ and $\c P$.
 \end{lemma}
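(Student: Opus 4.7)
The plan is to mimic the proof of Lemma~\ref{guitarbream}, replacing the coefficient $\mu(de)^2/2^{\#\{\ell\in\c P:\ell\mid de\}}$ by $\mu(de)^2/\tau(de)$. Since $\mu(de)^2=1$ forces both $d$ and $e$ to be squarefree and coprime (so $\tau(de)=\tau(d)\tau(e)$), I first detect $\gcd(d,e)=1$ by M\"obius inversion to write
\[
\c N''_{\sigma,\tau}(B;z)=\sum_{\eta}\frac{\mu(\eta)}{\tau(\eta)^2}\,A_\eta\!\l(\frac{B}{\eta\sigma'}\r)\,B_\eta\!\l(\frac{B}{\eta\tau'}\r),
\]
where $A_\eta(X),B_\eta(X)$ are single-variable sums of $\mu(\cdot)^2/\tau(\cdot)$ over squarefree integers whose prime factors lie in $\c P$, coprime to $\eta W$, and lying in a fixed progression modulo $W/\sigma'$ (respectively $W/\tau'$). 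As in~\eqref{eq:ki8ara} I would truncate $\eta\leq (\log B)^2$ with a tail of size $O(B^2/(\log B)^2)$, and then open each congruence by orthogonality of Dirichlet characters modulo $W/\sigma'$ or $W/\tau'$.

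The essential simplification from the hypothesis $\varpi=1$ enters at the level of non-principal characters: assumption~(4) of Definition~\ref{def:indepe} implies that $\c P$ contains all sufficiently large primes, so by Siegel--Walfisz we have $c_\psi(1)=0$ for every non-principal Dirichlet character $\psi$. Consequently, the non-principal contribution is negligible, by a direct adaptation of the argument around~\eqref{eq:elbetikasokolatakia}, or alternatively by Lemma~\ref{lem:panolino} with $\beta=1$ applied to the function $\mu(n)^2\psi(n)/\tau(n)$; this produces an overall error of shape $O(B^2/(\log B)^{1+\varrho})$ for some $\varrho>0$ depending only on $\c P$.

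The principal character contribution is handled by the Selberg--Delange method applied to the multiplicative function $f(n)=\mu(n)^2/\tau(n)\cdot\mathds 1(p\mid n\Rightarrow p\in\c P)\cdot\mathds 1(\gcd(n,\eta W)=1)$. Because $\varpi=1$ and $f(p)=1/2$, we have $\sum_{p\leq H}f(p)\log p=H/2+O(H/(\log H)^A)$, so~\cite[Equation~(13.11)]{koukou} applies with $\kappa=1/2$, giving $A_\eta(X)\sim \widetilde c(\eta)X/(\phi(W)\eta\sqrt{\log X})$ with an explicit Euler product $\widetilde c(\eta)$ of the controlled growth~\eqref{eq:xaxakes}. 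Multiplying $A_\eta\cdot B_\eta$, completing the $\eta$-sum, and factoring the resulting Euler product produces a main term $B^2/(\Gamma(1/2)^2\log B)$ times a constant $\mathfrak S'_W$ of the same shape as $\mathfrak S_W$. Exactly as in Lemma~\ref{eq:glenngould}, $\mathfrak S'_W$ then simplifies to $W^{-2}\prod_{p\leq z}(1-1/p)^{-1}\l(1+O((\log z)^{-A})\r)$: the sum over $\delta$ coprime to $W$ contributes $1+O(1/z)$ since every non-trivial $\delta$ exceeds $z$, and the tail of the Euler product over primes $p>z$ is estimated by taking logarithms and invoking~\eqref{AssumptionsonAAA} with the trivial character. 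The main technical obstacle will be keeping the Selberg--Delange error terms uniformly small across the $\eta$-sum, which is achieved by the same kind of bound on $\widetilde c(\eta)$ used in Lemma~\ref{guitarbream} combined with the truncation $\eta\leq (\log B)^2$.
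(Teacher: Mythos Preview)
Your proposal is correct and follows essentially the same route as the paper: the paper's proof simply says to repeat the arguments of Lemmas~\ref{guitarbream}--\ref{eq:glenngould}, noting that $\varpi=1$ forces $c_\psi(1)=0$ for every non-principal $\psi$ (since $\c P$ then contains all large primes), so only the principal character survives. One minor remark: your appeal to Lemma~\ref{lem:panolino} with $\beta=1$ is not literally valid as that lemma is stated for $\beta$ sufficiently large, but the underlying argument (or the treatment at the end of the proof of Lemma~\ref{guitarbream}) goes through verbatim.
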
\begin{proof} The proof is similar to that of Lemmas~\ref{guitarbream}-\ref{eq:glenngould}. 
The situation is simpler here because when $\varpi=1$ the set $\c P$ contains all large enough primes by assumption, thus, 
$c_\psi(\beta)=0$ whenever  $\psi(\cdot) (\frac{\cdot}{\beta})$ is a non-principal Dirichlet character.\end{proof}

Recall the definition of $z_B$ in Theorem~\ref{thm:analytic} 
\begin{lemma}\label{eq:finalasymp} 
There exists $\varpi''>0$ such that if 
 $z=\min\{\varpi''\log \log B, z_B\}$ and  $ B\geq \c L(\mathrm e^{3z})$ then $$
\frac{N(B;\c P)}{B^2(\log B)^{-\varpi}}=\frac{1}{\Gamma(1-\varpi/2)^2}
 \frac{ 1+O((\log z)^{-A})}{ \prod_{p\leq z}(1-1/p)^{\varpi}}
(\c C+\mathds 1 (\varpi=1)\c C^*)
+O(z^{-1/2}),$$ where the implied constant depends only on $\c P$. Further, $$ \c C:=  W^{-2}  \sum_{\substack{ (\sigma ,\tau ) \in (\Z/W\Z)^2   
\\ v_\ell(\sigma),v_\ell(\tau)\leq k_\ell \forall \ell\leq z   }}1 \ \ \ \ \textrm{ and } \ \ \ \ \c C^*:=(a,b)_\R W^{-2}   
\sum_{\substack{ (\sigma ,\tau ) \in (\Z/W\Z)^2   
  \\ v_\ell(\sigma),v_\ell(\tau)\leq k_\ell \forall \ell\leq z   }}
\prod_{\substack{  p\leq z  }}(\sigma,\tau)_{\Q_p} $$  are both subject to 
$(\sigma ,\tau  )_{\Q_\ell}=1$ for all  $ \ell\leq z $ with $\ell \in \c P$.
\end{lemma}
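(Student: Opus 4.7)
The plan is to assemble the asymptotic by feeding the outputs of Lemmas~\ref{guitarbream}, \ref{eq:glenngould}, \ref{lem:capricafedoesnottakecards} and \ref{lem:thanksforthecoffee} into the decomposition of $N(B;\c P)$ supplied by Lemma~\ref{lem:ptzno}, then controlling all error terms after summation over the $W^2$ residue classes modulo $W$. The choice of $\varpi''$ will be made at the end, small enough so every accumulated error is $O(B^2(\log B)^{-\varpi}z^{-1/2})$.

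First I would apply Lemma~\ref{lem:ptzno}, writing $N(B;\c P)=\sum_{(\sigma,\tau)}(\c M'_{\sigma,\tau}(B;z)+\c M''_{\sigma,\tau}(B;z))$ up to the stated admissible error plus the term $O(W^2B^2/(\log B)^{7/6})$. The bound $W\leq \mathrm e^{3z}\leq(\log B)^{3\varpi''}$ turns this last error, divided by $B^2/(\log B)^\varpi$, into $O((\log B)^{6\varpi''+\varpi-7/6})$, which is $o(z^{-1/2})$ for $\varpi''$ sufficiently small. The hypotheses of Lemma~\ref{lem:ptzno}, namely $z\leq\log\log B$ and $\{\log\c L(\mathrm e^{3z})\}^{5/4}\leq\log(B/(\log B)^{23})$, are forced respectively by $\varpi''<1$ and by the combination $z\leq z_B$ with $B\geq\c L(\mathrm e^{3z})$ (using that $5/4\leq 8/(1-\varpi)$).

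Next I would invoke Lemma~\ref{guitarbream} for each $\c M'_{\sigma,\tau}(B;z)$. Since its main term is independent of $(\sigma,\tau)$, the sum over the admissible classes multiplies it by their cardinality, which equals $W^2\c C$ by definition. Substituting $\mathfrak S_W$ from Lemma~\ref{eq:glenngould} causes the two factors of $W^2$ to cancel and produces exactly the $\c C$-piece of the claimed asymptotic, with relative error $O((\log z)^{-A})$. The error term $W^2B^2/(\log B)^{\varpi+\theta}$ from Lemma~\ref{guitarbream}, divided by $B^2/(\log B)^\varpi$, becomes $O((\log B)^{6\varpi''-\theta})$, absorbed once $\varpi''<\theta/6$. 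The hypothesis $\log B\geq(\log\c L(\mathrm e^{3z}))^{8/(1-\varpi)}$ for $\varpi\neq 1$ holds by $z\leq z_B$, while $\log B\geq\mathrm e^z$ follows from $z\leq\varpi''\log\log B$.

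For the $\c M''$ contribution I would split on $\varpi$. If $\varpi\neq 1$, Lemma~\ref{lem:capricafedoesnottakecards} provides the per-term bound $O(B^2(\log B)^{\varpi-2})$, and summing over $W^2$ classes keeps this in the admissible error region. If $\varpi=1$, the same lemma factorizes $\c M''_{\sigma,\tau}=(a,b)_\R\prod_{\ell\leq z}(\sigma,\tau)_{\Q_\ell}\,\c N''_{\sigma,\tau}(B;z)$; the main term of $\c N''$ supplied by Lemma~\ref{lem:thanksforthecoffee} is independent of $(\sigma,\tau)$, so the weighted sum over admissible $(\sigma,\tau)$ is exactly $W^2\c C^*$ by the very definition of $\c C^*$. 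The $W^2$ factors cancel again, giving the $\mathds 1(\varpi=1)\c C^*$ contribution. The $\c N''$-error $W^2B^2/(\log B)^{1+\varrho}$ is absorbed once $\varpi''<\varrho/6$.

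The main obstacle is the combinatorial bookkeeping: a \emph{single} choice $\varpi''=\min\{\theta,\varrho,1/40,1-\varpi\}/C$ must make every accumulated error simultaneously $O(B^2(\log B)^{-\varpi}z^{-1/2})$, and the two $(1+O((\log z)^{-A}))$ relative errors (one from $\mathfrak S_W$, one from $\c N''$) must combine cleanly into a single $(1+O((\log z)^{-A}))$ after adjusting $A$; this is immediate from $\c C,\c C^*=O(1)$. Apart from this accounting, the proof is a direct assembly of the preceding lemmas.
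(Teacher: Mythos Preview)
Your proposal is correct and follows essentially the same route as the paper: feed Lemmas~\ref{guitarbream} and~\ref{eq:glenngould} into the $\c M'$-part and Lemmas~\ref{lem:capricafedoesnottakecards} and~\ref{lem:thanksforthecoffee} into the $\c M''$-part of the decomposition in Lemma~\ref{lem:ptzno}, then sum over the $W^2$ residue classes and absorb all powers of $W\le\mathrm e^{3z}$ into the error by taking $\varpi''$ small. One cosmetic slip: your proposed formula $\varpi''=\min\{\theta,\varrho,1/40,1-\varpi\}/C$ collapses to $0$ when $\varpi=1$; simply drop the $1-\varpi$ entry in that case, since the constraint it encodes (from the $\varpi\neq 1$ branch of $\c M''$) is then irrelevant.
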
 \begin{proof} Injecting   Lemma~\ref{lem:thanksforthecoffee}  into 
Lemma~\ref{lem:capricafedoesnottakecards}  yields an asymptotic for the  sum $\c M''_{\sigma ,\tau }( B;z)$ that can then be fed into
 in Lemma~\ref{lem:ptzno}. 
The first sum $\c M'_{\sigma ,\tau }( B;z)$  in Lemma~\ref{lem:ptzno}
can  be estimated by combining   Lemmas~\ref{guitarbream}-\ref{eq:glenngould}. Thus,   $\c M'_{\sigma,\tau}(B;z)+\c M''_{\sigma,\tau}(B;z)$ equals 
$$ \l(1+(a,b)_\R \prod_{\substack{  \ell\leq z  }}(\sigma,\tau)_{\Q_\ell} \r)
 \frac{   \{1+O((\log z)^{-P})\}  }{W^2\Gamma(1-\varpi/2)^2} \prod_{p\leq z }\l(1-\frac{1}{p}\r)^{-\varpi}
 \frac{B^2}{(\log B )^{\varpi}  }  +O\l(\frac{B^2}{(\log B )^{\varpi+\varpi'}  } \r)
,$$ where $\varpi'>0$ depends on $\c P$. Summing over $\sigma, \tau$ we then use    Lemma~\ref{saladthencoffee} 
to prove the desired asymptotic  for $B$ fulfilling   $W^2\sqrt z\leq (\log B)^{\varpi'}$,
$\log B \geq  \mathds 1 (\varpi \neq 1 )(\log \c L(\mathrm e^{3z} ))^{\frac{8}{1-\varpi}}$ and $ \log (B/(\log B)^{23} ) \geq \{ \log \c L(\mathrm e ^{3z}) \}^{5/4}$. 
Since $W\leq \mathrm e^{3z}$, the first inequality is satisfied if we further assume that $7z \leq \varpi' \log \log B$,
which is admissible by  taking $\varpi''$ to be a  a suitably small positive constant. For the last two inequalities to hold 
it is sufficient that $ B\geq \c L(\mathrm e^{3z})$.
\end{proof}

Recall that  $\mu_p= \mathrm{vol}(s,t\in \Z_p:(s,t)_{\Q_p}=1)$, where $\mathrm{vol}$ is the $p$-adic Haar measure.
\begin{lemma}\label{lem:mestenaxwreis}The equality   $\mu_2=13/18$ holds and for $p\neq 2 $ we have 
$$ \mu_p= \frac{2 p^2 + 2 p + 1}{2 p^2 + 4 p + 2}.$$ Furthermore, 
\beq{gabriellibrass}{\c C=\l(1+O\l(\frac{1}{\sqrt z} \r) \r )  \prod_{\substack{ p\leq z \\p\in \c P }} \mu_p.}
\end{lemma}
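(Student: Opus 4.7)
The two halves of the lemma are independent; I would treat them in turn.

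For the explicit values of $\mu_p$, I would decompose $\mathbb{Z}_p^2$ by the joint valuation $(a,b) = (v_p(s), v_p(t))$. For odd $p$, writing $s = p^a u$, $t = p^b v$ with $u,v \in \mathbb{Z}_p^\times$, the classical formula $(s,t)_{\mathbb{Q}_p} = (-1/p)^{ab}(u/p)^b(v/p)^a$ shows that whether the symbol equals $1$ depends only on the parities of $a,b$ and on the quadratic residue classes of $u,v$ mod $p$. Since $\mathrm{vol}(v_p(s)=a,v_p(t)=b) = (1-1/p)^2 p^{-a-b}$ and since, conditionally, the symbols $(u/p)$ and $(v/p)$ are independent fair coin flips, the probability that the Hilbert symbol equals $1$ is a finite combination of geometric series, one for each parity class of $(a,b)$ and each value of $(-1/p)$. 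Summing and placing everything over a common denominator yields $\mu_p = (2p^2+2p+1)/(2p^2+4p+2)$. For $p=2$, the same strategy applies via the explicit formula for $(s,t)_{\mathbb{Q}_2}$, whose dependence on $s2^{-v_2(s)}$ and $t2^{-v_2(t)}$ modulo $8$ was recorded in \S\ref{s:periodicity}; the resulting case analysis produces $\mu_2 = 13/18$.

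For the factorization of $\mathcal{C}$, I would apply the Chinese Remainder Theorem to $W = 2^{3+k_2}\prod_{2 < \ell \leq z}\ell^{1+k_\ell}$. The conditions that define $\mathcal{C}$ are entirely local at each prime ($v_\ell(\sigma), v_\ell(\tau) \leq k_\ell$ and, when $\ell \in \mathcal{P}$, $(\sigma,\tau)_{\mathbb{Q}_\ell} = 1$), so $\mathcal{C} = \prod_{\ell \leq z}\mathcal{C}_\ell$ factors as a product of local densities. For $\ell \notin \mathcal{P}$ one has $\mathcal{C}_\ell = (1-\ell^{-1-k_\ell})^2 = 1 + O(1/z)$, since $\ell^{1+k_\ell} \geq z$ by the definition of $k_\ell$ in \eqref{apologynotrequested}. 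For $\ell \in \mathcal{P}$, the periodicity property of the Hilbert symbol recalled in \S\ref{s:periodicity} identifies $\mathcal{C}_\ell$ with the normalised Haar measure of the set of $(s,t) \in \mathbb{Z}_\ell^2$ satisfying both $(s,t)_{\mathbb{Q}_\ell} = 1$ and $\max\{v_\ell(s),v_\ell(t)\} \leq k_\ell$. Removing the high-valuation event, whose measure is at most $2\ell^{-1-k_\ell} \leq 2/z$, gives $\mathcal{C}_\ell = \mu_\ell + O(1/z)$.

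Assembling the products is then immediate: the explicit formulas from the first part give the uniform lower bound $\mu_\ell \geq \mu_2 = 13/18$, so that $\mathcal{C}_\ell/\mu_\ell = 1 + O(1/z)$ for $\ell \in \mathcal{P}$, while the factors $\ell \notin \mathcal{P}$ satisfy the same bound with $\mu_\ell$ replaced by $1$. Multiplying over the $\pi(z) \ll z/\log z$ primes below $z$,
\[
\frac{\mathcal{C}}{\prod_{\ell \leq z,\,\ell \in \mathcal{P}} \mu_\ell} = \prod_{\ell \leq z}\bigl(1 + O(1/z)\bigr) = \exp\bigl(O(\pi(z)/z)\bigr) = 1 + O(1/\log z),
\]
which is comfortably stronger than the claimed $O(1/\sqrt{z})$. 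The only genuine technical obstacle is the bookkeeping for $p=2$ in Part 1, where one must track residues mod $8$ against the parities of $v_2(s), v_2(t)$; the rest is a direct combination of the local computation with CRT and the periodicity statement of \S\ref{s:periodicity}.
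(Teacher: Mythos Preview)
Your approach mirrors the paper's almost exactly: the valuation decomposition for $\mu_p$, the CRT factorisation of $\c C$, and the per-prime comparison $\c C_\ell=\mu_\ell+O(\ell^{-1-k_\ell})$ are all the same ingredients. The gap is in your last display. You write
\[
\prod_{\ell\leq z}\bigl(1+O(1/z)\bigr)=\exp\bigl(O(\pi(z)/z)\bigr)=1+O(1/\log z)
\]
and then assert that this is ``comfortably stronger than the claimed $O(1/\sqrt z)$''. It is not: for large $z$ one has $1/\log z \gg 1/\sqrt z$, so $O(1/\log z)$ is a \emph{weaker} bound. This matters downstream, since the final error in Theorem~\ref{thm:therm1} is $O((\log\log\log B)^{-A})$ with $z\asymp\log\log B$; an $O(1/\log z)$ term would cap you at $A=1$.

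The fix is already in your hands. You correctly recorded the sharper per-prime error $\c C_\ell-\mu_\ell=O(\ell^{-1-k_\ell})$ before discarding it in favour of the crude $\leq 2/z$. Keep it, and sum:
\[
\log\prod_{\ell\leq z}\frac{\c C_\ell}{\mu_\ell}
=\sum_{\ell\leq z}O\!\l(\ell^{-1-k_\ell}\r)
=O\!\l(\frac{1}{\sqrt z}\r),
\]
where the last estimate is exactly the computation carried out in the proof of Lemma~\ref{bachpartitas23} (split according to the value of $k_\ell$; the dominant contribution is $k_\ell=1$, i.e.\ $\ell>\sqrt z$, giving $\sum_{\ell>\sqrt z}\ell^{-2}\ll 1/(\sqrt z\log z)$). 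This is precisely how the paper obtains the $O(1/\sqrt z)$ in~\eqref{gabriellibrass}.
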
\begin{proof}Define   $c_p=2\mathds 1_{\{2\}}(p)$ and 
\begin{align*}\sigma_p'&= \frac{\#\{(\sigma,\tau)\in (\Z/p^{k_p+1+c_p})^2: (\sigma,\tau)_{\Q_p}=1, v_p(\sigma),v_p(\tau)\leq k_p\}}{p^{2k_p+2+2c_p}}, \\
\tau_p'&= \frac{\#\{(\sigma,\tau)\in (\Z/p^{k_p+1+c_p})^2:   v_p(\sigma),v_p(\tau)\leq k_p\}}{p^{2k_p+2+2c_p}}.\end{align*}
A straightforward argument based on the Chinese remainder theorem shows that $$ \c C=\prod_{\substack{p \leq z\\ p\in \c P} }\sigma_p' 
\prod_{\substack{p \leq z\\ p\notin \c P} }\tau_p' .$$   With arguments similar to those in    the proof of Lemma~\ref{bachpartitas23}
we get  $$ \log \prod_{p\leq z } \frac{1}{1+O(p^{-1-k_p})} \ll \sum_{p\leq z } \frac{1}{p^{1+k_p}}  \ll \frac{1}{\sqrt z},$$ hence,  
$\prod_{p\leq z, p\in \c P }\tau'_p=1+O(z^{-1/2}) $. To study $\sigma'_p$ for $p\neq 2 $
we let  $\sigma=p^\alpha u , \tau=p^\beta v$ with $p\nmid uv$ to obtain 
\[p^{-2-2k_p} \sum_{\substack{ 0\leq \alpha, \beta \leq k_p }}
\#\l\{u \md{p^{1+k_p-\alpha}},  v \md{p^{1+k_p-\beta}}
:p\nmid uv , \l(\frac{v}{p}\r)^\alpha\l(\frac{u}{p}\r)^\beta =\l(\frac{-1}{p} \r)^{\alpha \beta }\r \}
.\] Since the last statement is periodic for $u,v \md p$ we see that this is 
\[p^{-2 }\sum_{\substack{ 0\leq \alpha,\beta \leq k_p }}p^{ -\alpha-\beta}
\#\l\{u,v \md{p}:p\nmid uv , \l(\frac{v}{p}\r)^\alpha\l(\frac{u}{p}\r)^\beta =\l(\frac{-1}{p} \r)^{\alpha \beta }\r \}
=p^{-2 }\sum_{\substack{ 0\leq \alpha , \beta \leq k_p }}
\frac{c(\alpha,\beta)}{p^{ \alpha+\beta}} ,\] say. Note that $0\leq c(\alpha,\beta)\leq p^2 $, hence, 
$$\sum_{\substack{  \alpha , \beta \geq 0 }}\frac{c(\alpha,\beta)}{p^{2+ \alpha+\beta}}-
\sum_{\substack{ 0\leq \alpha , \beta\leq k_p  }}
\frac{c(\alpha,\beta)}{p^{ 2+\alpha+\beta}} \ll  \sum_{\alpha>k_p} p^{-\alpha} \ll  1/z.
$$ Hence, the following estimate holds with an absolute implied constant, 
$$\sigma'_p=  \sum_{  \alpha , \beta \geq 0 } c(\alpha,\beta) p^{ -2-\alpha-\beta}+O\l( \frac{1}{z}\r)
.$$
Repeating the same arguments without the restrictions $v_p(\sigma),v_p(\tau)$ will give 
$$\mathrm{vol}(s,t\in \Z_p:(s,t)_{\Q_p}=1)= \sum_{  \alpha , \beta \geq 0 } c(\alpha,\beta) p^{ -2-\alpha-\beta},$$ so that 
$\sigma'_p= \mu_p+O(1/z)$.  Noting that  $ c(\alpha,\beta)$ depends on $\alpha, \beta\md 2 $ we can use the estimate 
$\sum_{\alpha \geq 0 , \alpha \equiv i \md 2 } p^{-\alpha}=p^{-i}(1-1/p^2)^{-1}$ for $i=0,1$ to  write \[\mu_p=p^{-2} (1-1/p^2)^{-2}\sum_{(\alpha,\beta)\in \{0,1\}^2} 
\frac{c(\alpha,\beta) }{p^{\alpha+\beta}}.\] Clearly,  $c(0,0)=(p-1)^2$ and $c(0,1)=c(1,0)=c(1,1)=\frac{ (p-1)^2 }{2} $, hence, 
\[\mu_p=\frac{(p-1)^2}{p^2(1-1/p^2)^{2} } \l(1+\frac{1}{p}+\frac{1}{2p^2} \r)
=\frac{2 p^2 + 2 p + 1}{2 p^2 + 4 p + 2}=1-\frac{1}{p}+O\l(\frac{1}{p^2}\r).\]
A similar argument for $p=2$ gives $\sigma'_2=\mu_2+O(1/z)$ and   \[ \mu_2= \frac{1}{6^2}  
\sum_{\substack{ ( \alpha, \beta)\in \{0,1\}^2     }} 2^{-\alpha-\beta}
\#\l\{u  , v \md{8}:2\nmid uv , \l(\frac{2}{v}\r)^\alpha\l(\frac{2}{u}\r)^\beta=(-1)^{\frac{(u-1)(v-1)}{4}}  \r \}
.\] When at most one $\alpha,\beta$ vanishes, the cardinality equals $12$; it equals $8$   if they are both $1$.
Thus the sum over $\alpha,\beta$ equals $26$ so that $\mu_2=13/18$.  
Since $\mu_p\gg 1$ with an absolute implied constant we obtain $\sigma'_p= \mu_p(1+O(1/z))$ from the bound $\sigma'_p= \mu_p+O(1/z)$, hence, 
$$ \prod_{\substack{ p\leq z \\ p\in \c P} } \sigma'_p=(1+O(1/z)) \prod_{\substack{ p\leq z \\ p\in \c P} } \mu_p.$$ Together with our earlier bound on 
-1+$\prod_{p\leq z,p\notin \c P}\tau'_p$ this proves~\eqref{gabriellibrass}.  \end{proof}

\begin{lemma}\label{estensendfiles}For $\varpi=1$ and $z=z(B)\to+\infty$ we have  
$$\c C^*=(a,b)_\R\l(1+O\l(\frac{1}{\sqrt z} \r) \r )  \prod_{\substack{ p\leq z \\p\in \c P }} \mu_p
\prod_{\substack{  p\notin \c P }} (2\mu_p-1).$$
\end{lemma}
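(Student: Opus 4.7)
The plan is to factor $\c C^*$ as a product of local factors indexed by primes $p\leq z$ via the Chinese remainder theorem, using the periodicity of the Hilbert symbol described in \S\ref{s:periodicity}. Since the factor $(a,b)_\R$ appears on both sides of the claim, it suffices to evaluate the remaining sum. For primes $p\in\c P$ the imposed constraint $(\sigma,\tau)_{\Q_p}=1$ forces the corresponding Hilbert factor to equal $1$, so the local factor at such $p$ is precisely the quantity $\sigma_p'$ analysed in the previous lemma. For $p\notin\c P$ no such constraint is imposed, and the local factor becomes
\[
\rho_p':= \frac{1}{p^{2+2k_p+2c_p}} \sum_{\substack{(\sigma,\tau)\in (\Z/p^{1+k_p+c_p}\Z)^2 \\ v_p(\sigma),\, v_p(\tau)\leq k_p}} (\sigma,\tau)_{\Q_p}.
\]

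The key identity is $\rho_p' = 2\sigma_p' - \tau_p'$, obtained by splitting the Hilbert-symbol sum into its $\pm 1$ contributions: if $n_\pm$ denotes the number of admissible pairs carrying symbol $\pm 1$, then $\sigma_p'$ normalises $n_+$, $\tau_p'$ normalises $n_+ + n_-$, and $\rho_p'$ normalises $n_+ - n_-$. Plugging in the estimates $\sigma_p' = \mu_p + O(1/z)$ and $\tau_p' = 1 + O(p^{-1-k_p})$ from the proof of Lemma~\ref{lem:mestenaxwreis} yields $\rho_p' = 2\mu_p - 1 + O(1/z)$.

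Finally, assumption (4) of Definition~\ref{def:indepe} applied to $\varpi=1$ ensures that $\c P$ contains all sufficiently large primes, so once $z$ is large enough the set $\{p\leq z : p\notin\c P\}$ stabilises to the fixed finite set $\{p : p\notin\c P\}$. Since each factor $2\mu_p-1$ is bounded and nonzero (indeed $\mu_p>1/2$ by the explicit formulas of Lemma~\ref{lem:mestenaxwreis}) and only finitely many primes enter this product, the errors accumulate to $O(1/z)$:
\[
\prod_{\substack{p\leq z \\ p\notin\c P}}\rho_p' = \l(1+O(1/z)\r)\prod_{p\notin\c P}(2\mu_p-1).
\]
Combined with the bound $\prod_{p\leq z,\,p\in\c P}\sigma_p' = (1+O(1/\sqrt{z}))\prod_{p\leq z,\,p\in\c P}\mu_p$ proved in Lemma~\ref{lem:mestenaxwreis}, this yields the asserted formula. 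The main technical point requiring care is the CRT factorisation itself: one must verify that the constraints $v_\ell(\sigma),v_\ell(\tau)\leq k_\ell$ and the weighting $\prod_{p\leq z}(\sigma,\tau)_{\Q_p}$ genuinely split across distinct primes, which is guaranteed by the Hilbert-symbol periodicity of \S\ref{s:periodicity}. Everything else amounts to bookkeeping controlled by the previous lemma and the finiteness of $\{p:p\notin\c P\}$ under $\varpi=1$.
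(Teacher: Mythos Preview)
Your proof is correct and follows essentially the same route as the paper: factor $\c C^*$ via CRT into local factors $\sigma_p'$ for $p\in\c P$ and a Hilbert-symbol weighted factor for $p\notin\c P$ (the paper calls it $\tau''_p$, you call it $\rho_p'$), observe the identity $\rho_p'=2\sigma_p'-\tau_p'$, import the estimates from Lemma~\ref{lem:mestenaxwreis}, and finally use assumption~(4) of Definition~\ref{def:indepe} to drop the condition $p\leq z$ on the finite product over $p\notin\c P$. Your additional remark that $2\mu_p-1>0$ is a helpful clarification the paper leaves implicit.
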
\begin{proof}Let $$\tau''_p = \frac{1}{p^{2k_p+2+2c_p}}
\sum_{\substack{(\sigma,\tau)\in (\Z/p^{k_p+1+c_p})^2\\  v_p(\sigma),v_p(\tau)\leq k_p}} (\sigma,\tau)_{\Q_p}.$$
A straightforward argument based on the Chinese remainder theorem shows that $$ \c C^*=(a,b)_\R
\prod_{\substack{p \leq z\\ p\in \c P} }\sigma_p' 
\prod_{\substack{p \leq z\\ p\notin \c P} }\tau''_p.$$ Noting that $\tau''_p=2\sigma'_p-\tau'_p$ and using the estimates for $\tau'_p, \sigma'_p$  
from the proof of Lemma~\ref{lem:mestenaxwreis} gives$$\c C^*=(a,b)_\R \l(1+O\l(\frac{1}{\sqrt z} \r) \r )  \prod_{\substack{ p\leq z \\p\in \c P }} \mu_p
\prod_{\substack{ p\leq z \\p\notin \c P }} (2\mu_p-1).$$ To conclude the proof, note that by assumption when $\varpi=1$ the set $\c P$ has all sufficiently large primes, thus, the condition $p\leq z $ is redundant in the second product since $z(B)\to+\infty$.
\end{proof} Next we give a lemma that is only needed later for the proof of Theorem~\ref{thm:perfectly}.
\begin{lemma}\label{lem:irrelevant}Fix an integer $n\geq 1$. For any $x_1,\ldots, x_n \in [0,1]$ we have 
$$2 \prod_{i=1}^n x_i \leq 1+\prod_{i=1}^n(2x_i-1).$$ If $n\geq 2 $ and 
there is $i$ with $x_i\neq 1$ then strict inequality holds. 
\end{lemma}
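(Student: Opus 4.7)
The plan is to prove the inequality by induction on $n$, with the base case $n=1$ being the identity $2x_1 = 1 + (2x_1-1)$. For the inductive step, set $A := \prod_{i<n} x_i$ and $B := \prod_{i<n}(2x_i - 1)$, so the inductive hypothesis reads $2A \leq 1 + B$. The key observation is that the function
\[
F(x_n) := 1 + (2x_n - 1)B - 2x_n A = (1-x_n)(1 - B) + x_n(1 + B - 2A)
\]
is linear in $x_n$ and written as an explicit convex combination of its endpoint values. Since each $2x_i - 1 \in [-1,1]$ gives $B \leq 1$, one has $F(0) = 1 - B \geq 0$, and the inductive hypothesis gives $F(1) = 1 + B - 2A \geq 0$. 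Therefore $F(x_n) \geq 0$ on $[0,1]$, which is exactly the required inequality at stage $n$.

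A more conceptually satisfying route, which I would actually present, is probabilistic. Introduce independent $\pm 1$-valued random variables $Z_1,\ldots,Z_n$ with $P(Z_i = 1) = x_i$, so that $\mathbb{E}[Z_i] = 2x_i - 1$ and, by independence, $\mathbb{E}\!\left[\prod_i Z_i\right] = \prod_i(2x_i - 1)$. Since $\prod_i Z_i = 1$ precisely when an even number of the $Z_i$'s equal $-1$,
\[
\frac{1 + \prod_i(2x_i-1)}{2} \;=\; \sum_{\substack{S \subseteq [n]\\ |S|\text{ even}}}\prod_{i\in S}(1-x_i)\prod_{i\notin S} x_i.
\]
Isolating the $S = \emptyset$ term gives
\[
\frac{1 + \prod_i(2x_i-1)}{2} - \prod_i x_i \;=\; \sum_{\substack{S\subseteq [n]\\ |S|\text{ even},\, |S|\ge 2}}\prod_{i\in S}(1-x_i)\prod_{i\notin S} x_i \;\geq\; 0,
\]
which is the desired inequality, already in the form of a nonnegative sum.

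For the strictness claim, the second expression makes it transparent: one just needs the existence of a single subset $S\subseteq [n]$ with $|S|$ even and $|S|\geq 2$ for which the corresponding product is nonzero, i.e. $x_i < 1$ on $S$ and $x_i > 0$ off $S$. Under the stated hypothesis $n \geq 2$ and some $x_i \neq 1$, one produces such an $S$ by taking an even-sized set that includes all indices with $x_i = 0$ together with enough indices where $x_i \neq 1$; an easy parity adjustment handles the boundary cases. The only delicate point, which I do not expect to be a real obstacle, is this parity/bookkeeping step for strictness; the inequality itself is a one-line probabilistic identity or, equivalently, a one-line linear-interpolation argument.
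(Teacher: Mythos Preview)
Your proofs of the inequality itself are correct. The inductive route is essentially the paper's, but yours is actually cleaner: the paper claims $F$ is nonincreasing because ``$2x_i-1\le x_i$'' forces $\prod_i(2x_i-1)\le\prod_i x_i$, which is false (take $x_1=x_2=0$); your device of writing $F(x_n)=(1-x_n)(1-B)+x_n(1+B-2A)$ as a convex combination of two nonnegative endpoint values sidesteps this. The probabilistic identity is a genuinely different and more informative argument, since it exhibits the difference as an explicit nonnegative sum indexed by even subsets; nothing comparable appears in the paper's induction.

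On strictness, your caution was warranted and your optimism misplaced: the claim is \emph{false} as stated. For $n=2$, $x_1=0$, $x_2=1$ one has $2x_1x_2=0=1+(2x_1-1)(2x_2-1)$ although $x_1\ne1$; the paper's own base case reduces to $(1-x_1)(1-x_2)>0$, which visibly fails here. From your identity, equality holds exactly when no even $S$ with $|S|\ge2$ satisfies $\{i:x_i=0\}\subseteq S\subseteq\{i:x_i<1\}$, so in particular whenever only one $x_i$ differs from $1$, or when all $x_i\in\{0,1\}$ with an odd number of zeros; your ``parity adjustment'' cannot be carried out in these cases. In the paper's sole application (Remark~\ref{rem:Leonardo_Leo_is_a_god}) every $x_i=\mu_p\in(0,1)$, and then your argument with $S$ any two-element subset does yield strict inequality.
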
\begin{proof}We use induction on $n$. Assume that it holds for $n$ and define $F:[0,1]\to \R$ by  
$$F(x)= 1+(2x-1)\prod_{i=1}^{n}(2x_i-1)-2 x\prod_{i=1}^{n} x_i .$$
The coefficient of $x$
is not exceeding $0$ because $2x_i-1 \leq x_i$ for each $i$. Hence, the minimum of $F(x)$  
occurs at $x=1$, thus, $$F(x_{n+1})\geq F(1)=1+\prod_{i=1}^{n}(2x_i-1)-2 \prod_{i=1}^{n} x_i,$$ which is in $[0,\infty)$   by the induction hypothesis.
This proves the first claim. The second claim is also proved by induction starting at $n=2$. The inductive step is the same as above.
To see why the claim holds for $n=2$ note that $2 x_1 x_2 < 1+ (2x_1-1)(2x_2-1)$ is equivalent to $ x_1+x_2<  1+x_1x_2$.
This in turn holds since $\min x_i<1 $ and $\max x_i \leq 1$. 
 \end{proof} 
\begin{remark}\label{rem:Leonardo_Leo_is_a_god}
Let $(a,b)\in \{1,-1\}^2\setminus \{(-1,1)\}$, let $\c P_1$ be the set of all primes
and $\c P_0$ be a set of primes  whose complement inside $\c P_1$
is finite. Then for the leading constant $c$ in Theorem~\ref{thm:analytic}
we have $ c(\c P_1) < c(\c P_0)$ when $\c P_1\setminus \c P_0$ contains at least  two distinct primes.
This can be seen either  by    taking $B\to\infty$ in Theorem~\ref{thm:analytic}
or from Lemma~\ref{lem:irrelevant} for $n=\#(\c P_1\setminus \c P_0)$. If $\c P_1\setminus \c P_0$ contains exactly one prime
then it can be inferred directly from the definition of $c$ that $c(\c P_0)=\c(P_1)$. This reflects Hilbert's reciprocity formula
as when a conic is soluble at all primes except one, then it must also be soluble at the missing prime, thus, $N(B,\c P_0)=N(B,\c P_1)$ in this case.
\end{remark}

\subsection{Proof of Theorem~\ref{thm:analytic}}Injecting Lemmas~\ref{lem:mestenaxwreis}-\ref{estensendfiles} into Lemma~\ref{eq:finalasymp} 
 shows that  $$\frac{N(B;\c P)}{B^2(\log B)^{-\varpi}}=\frac{1}{\Gamma(1-\varpi/2)^2}
 \frac{ \prod_{ p\leq z, p\in \c P  } \mu_p }{ \prod_{p\leq z}(1-1/p)^{\varpi}}
\l( 1+\mathds 1 (\varpi=1)(a,b)_\R \prod_{ p\notin \c P  } (2\mu_p-1) \r)
+O((\log z)^{-A}) ,$$  
where the implied constant depends only on $A$ and $\c P$.
By~\eqref{AssumptionsonAAA}  and partial summation one gets 
$$\prod_{p> z} \l(1 +\mathds 1_{\c P}(p) (\mu_p-1) \r) (1-1/p)^{-\varpi}
=1+O((\log z)^{-A}),$$ thus  $$  \frac{ \prod_{ p\leq z, p\in \c P  } \mu_p }{ \prod_{p\leq z}(1-1/p)^{\varpi}}
=O((\log z)^{-A})+\lim_{t\to\infty }\prod_{p\leq t} \frac{ 1  +\mathds 1_{\c P}(p) (\mu_p-1) }{ (1-1/p)^{-\varpi}}.$$
Since   $\log \min\{\log \log B, z_B\} \ll \log \min\{ \varpi''\log \log B, z_B\}$, we see that $N(B;\c P)B^{-2}(\log B)^{\varpi}$
is  $$\frac{c(\c P)}{\Gamma(1-\varpi/2)^2}
\l( 1+\mathds 1 (\varpi=1) (a,b)_\R\prod_{ p\notin \c P  } (2\mu_p-1) \r)
+O((\log \min\{\log \log B, z_B\})^{-A}) 
$$ with an implied constant that depends at most on $A$ and $\c P$. 
This concludes the proof of Theorem~\ref{thm:analytic}.

\section{Class field theory}
\label{s:diophantnstabl}

\subsection{Hilbert symbols in field extensions}
In this section $K$ denotes a {local number field}, namely a finite extension of $\Q_p$ for a prime number $p$. 
We denote by $v_K$ the {valuation} of the ring of integer $\mathcal{O}_K$ of $K$, normalized with the convention that $v_K(\pi_K)=1$,
for any $\pi_K$ that generates the maximal ideal of $\c O_K$.
The {Hilbert symbol} over $K$ is the function
$$(-,-)_K: \ \ \ 
K^{*}/K^{*2} \times K^{*}/K^{*2} \to \{1,-1\},
 $$ defined to be $(a,b)_K=1$, in case the conic $ax^2+by^2=z^2$ admits a non-trivial $K$-point. 
This symbol defines a symmetric non-degenerate bilinear pairing on $K^{\times}/K^{\times 2}$. 

We recall~\cite[Corollary 1.5.7]{MR2392026}      that describes how the Hilbert symbol changes under finite extensions. 
Many similar properties are established in~\cite[Section 14.2]{MR2392026}.
\begin{proposition} \label{HS: gets powered to degree}
Let $K$ be a local number field and let $a,b\in K^{\times}/K^{\times 2}$. For any finite extension  $M/K$ we have 
 $(a,b)_M=(a,b)_K^{[M:K]}$.\end{proposition}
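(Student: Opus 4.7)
My approach is to reduce the statement to the norm-compatibility of the Artin reciprocity map under base change. First I dispose of the degenerate cases. If $b\in M^{\times 2}$ then $(a,b)_M=1$; if moreover $b\in K^{\times 2}$ then $(a,b)_K=1$ as well, while if $b\notin K^{\times 2}$ then $K(\sqrt b)\subseteq M$ forces $2\mid [M:K]$, so that $(a,b)_K^{[M:K]}=1$. By symmetry of the Hilbert symbol in its two arguments the analogous analysis handles $a\in M^{\times 2}$. I may therefore assume $a,b\notin M^{\times 2}$, and in particular $a,b\notin K^{\times 2}$, so that $L:=K(\sqrt a)$ and $L':=LM=M(\sqrt a)$ are genuine quadratic extensions of $K$ and $M$.

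Next I translate the problem into the norm residue language. By local class field theory $(a,c)_K=\chi_a(c)$, where $\chi_a\colon K^\times/N_{L/K}(L^\times)\to\{\pm 1\}$ is the quadratic character cut out by $L/K$ via the Artin map; define $\chi_a^M$ analogously using $L'/M$. The decisive input is the norm-compatibility of reciprocity under base change: for every $x\in M^\times$,
\[
\chi_a^M(x)=\chi_a\bigl(N_{M/K}(x)\bigr),
\]
which is exactly the functoriality diagram $\rho_{L'/M}(x)|_L=\rho_{L/K}(N_{M/K}(x))$, a standard feature of local class field theory recorded in~\cite[\S 1.5]{MR2392026}.

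Finally I apply this identity with $x=b$, regarded as an element of $M^\times$ via the inclusion $K^\times\hookrightarrow M^\times$. Because $b\in K^\times$, one has $N_{M/K}(b)=b^{[M:K]}$, and bilinearity of the Hilbert symbol then gives
\[
(a,b)_M=\chi_a^M(b)=\chi_a\bigl(b^{[M:K]}\bigr)=\chi_a(b)^{[M:K]}=(a,b)_K^{[M:K]}.
\]
The main obstacle is invoking the functoriality $\chi_a^M=\chi_a\circ N_{M/K}$; modulo this standard result everything reduces to a one-line bilinearity calculation, which is why the paper simply cites~\cite[Corollary 1.5.7]{MR2392026}.
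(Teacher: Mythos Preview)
Your argument is correct. The paper does not supply its own proof of this proposition but simply cites~\cite[Corollary 1.5.7]{MR2392026}; your norm-compatibility argument via the local Artin map is precisely the standard route underlying that reference, so there is nothing to compare.
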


We   recall the basic tame formula for the Hilbert symbol and  some mild control   in the wild case. For an 
element $a$ of $\mathcal{O}_K^{\times}$ we define
$(\frac{a}{\pi_K})_{K}$
to be $-1$ when $a$ is not a square in $\mathcal{O}_F/\pi_F$ and $1$ otherwise. 
We denote by $e_K$ the 
{ramification index} of the extension $K/\Q_2$. The following is standard:
\begin{proposition} \label{HS tame case+a little bit of wild}
$(a)$ Let $K$ be a local field of odd residue characteristic. Let $a\in \mathcal{O}_K^{\times}$ and $b\in K^{\times}$ such that 
 $2\nmid v_K(b)$. Then$$(a,b)_K=\Big(\frac{a}{\pi_K}\Big)_{\!K}.$$
$(b)$ Suppose that $K$ has residue characteristic equal to $2$. Let $a,b$ be two non-zero elements of $\mathcal{O}_K$. Then the value of $(a,b)_K$ is entirely determined by $a,b$ in $\mathcal{O}_K/\pi_K^{2 e_K+1}$. 
\end{proposition}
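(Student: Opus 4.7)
The plan is to reduce both parts, via bilinearity and symmetry of the Hilbert symbol, to questions about when local units are squares, which are handled by Hensel's lemma; no deep class field theory is needed.

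For part (a), bilinearity splits the computation into two special cases: (i) $(u_1,u_2)_K=1$ for arbitrary units $u_1,u_2\in\mathcal O_K^{\times}$, and (ii) $(a,\pi_K)_K=\bigl(\tfrac{a}{\pi_K}\bigr)_{\!K}$ for $a\in\mathcal O_K^{\times}$. Claim (i) follows from Hensel: the reduction of the conic $u_1 x^2+u_2 y^2=z^2$ modulo $\pi_K$ is a nondegenerate ternary quadric over the residue field (of odd characteristic), which has a smooth point by a standard counting argument, and any such point lifts. For (ii), if $a$ is a square modulo $\pi_K$ then Hensel upgrades this to $a\in(\mathcal O_K^{\times})^2$, so $(a,\pi_K)_K=1$; otherwise I would argue by contradiction, reducing any primitive solution $(x,y,z)\in\mathcal O_K^3$ of $ax^2+\pi_K y^2=z^2$ modulo $\pi_K$ to obtain $ax^2\equiv z^2\pmod{\pi_K}$, which forces $\pi_K\mid x,z$ since $a$ is a non-square, and then $\pi_K^2\mid z^2-ax^2=\pi_K y^2$ yields $\pi_K\mid y$, contradicting primitivity.

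For part (b), the key input is that the congruence subgroup $U^{(2e_K+1)}:=1+\pi_K^{2e_K+1}\mathcal O_K$ is contained in $(\mathcal O_K^{\times})^2$. I would prove this by applying Hensel's lemma to $f(x)=x^2-c$ for $c\in U^{(2e_K+1)}$ with starting guess $x_0=1$: one has $v_K(f(x_0))\geq 2e_K+1>2e_K=2v_K(f'(x_0))$, which is the strong-form Hensel inequality. Bilinearity then gives $(a,b)_K=(a',b')_K$ whenever $a'a^{-1},b'b^{-1}\in U^{(2e_K+1)}$; writing $a=u\pi_K^i$, $b=w\pi_K^j$ with $u,w\in\mathcal O_K^{\times}$ and $i,j\geq 0$, the symbol therefore depends only on the parities of $i,j$ and on the images of $u,w$ modulo $\pi_K^{2e_K+1}$, both of which (provided $i,j<2e_K+1$) are encoded by the classes of $a,b$ in $\mathcal O_K/\pi_K^{2e_K+1}$.

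The main obstacle I anticipate is the bookkeeping at the end of (b) when either argument has valuation $\geq 2e_K+1$: its image in $\mathcal O_K/\pi_K^{2e_K+1}$ then vanishes, and the valuation has to be retained as extra data in order for the claim to be meaningful, which is how the statement should be read in practice. Part (a) is otherwise routine once the Hensel input is in place, and (b) has no further obstruction beyond the verification of the strong-Hensel inequality $2e_K+1>2e_K$.
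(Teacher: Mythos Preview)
The paper does not prove this proposition at all: it is introduced with ``The following is standard'' and left without proof. Your argument is a correct and entirely standard proof of both parts. The Hensel reductions in (a) and the key inclusion $1+\pi_K^{2e_K+1}\mathcal O_K\subseteq(\mathcal O_K^{\times})^2$ in (b) are exactly the expected ingredients.

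Your caveat at the end is on point and worth keeping: the statement of (b) as written is only literally correct when $v_K(a),v_K(b)<2e_K+1$, since otherwise the residue class modulo $\pi_K^{2e_K+1}$ is zero and carries no information about the valuation's parity. This is precisely how the paper uses the result in practice (see \S\ref{s:periodicity}, where the periodicity of the Hilbert symbol is invoked only for $s,t$ with $v_2(s),v_2(t)\leq k$ and modulus $2^{3+k}$), so your reading of the statement matches the intended application.
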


For the rest of this section $L$   denotes a number field and $L/F$     a finite extension. By Galois theory  $L$ corresponds to a transitive $G_F$-set $X_L$, for example, given by the set of roots of the minimal polynomial of a primitive element of $L$ over $K$.

The following result studies the change of Hilbert symbol under a field extension  in terms of the corresponding Galois set. 
Let $v$ be a finite place that is unramified in the Galois closure   
$N(L)/F$. We have a well-defined $\text{Gal}(N(L)/F)$-conjugacy class of elements denoted as 
$\text{Frob}_v$ in $\text{Gal}(N(L)/F)$, which we can view as permutations of $X_L$. Each of these permutations has the same cycle type
and there is a bijection between the cycles and the primes above $v$ in $F$: this bijection allows to read the local degree $[L_w:F_v]$ as the length of the corresponding cycle. If $v$ ramifies in $N(L)/F$, one can replace the conjugacy class of elements $\text{Frob}_v$ with the conjugacy class of subgroups given by the decomposition  groups at $v$ in $\text{Gal}(N(L)/F)$ and the cycles with the orbits of these decomposition groups. If $v$ is a non-archimedean prime, the  decomposition group at $v$ is still well-defined. 
\begin{proposition}\label{prop:tescoexpress}
Let $L/F$ be a finite extension of number fields and $a,b\in F^{\times}$. \\
$(a)$ For  a place $v$ of $L$ above a rational finite prime $w$ of $F$ we have  $(a,b)_{L_v}=(a,b)_{F_w}$ if  $v$ corresponds to an odd-length orbit of the decomposition group at $v$ acting on $X_L$. If the orbits are all even sized, then 
$(a,b)_{L_v}=1$ for all $v$ above $w$. \\
$(b)$ Let $v$ be a place of $L$ above a rational finite prime $w$ of $F$ that is unramified in the normal closure $N(L)$. Then
$(a,b)_{L_v}=(a,b)_{F_w}$
for all primes corresponding to an odd cycle of $\textup{Frob}_v$ in $X_L$. If $\textup{Frob}_v$ has only even cycles in $X_L$, then 
 $(a,b)_{L_v}=1$
for all $v$ above $w$. \\
$(c)$ For   a real archimedean place $v$ of $L$ we have  $(a,b)_{L_v}=(a,b)_{F_w}$. If $v$ is complex then $(a,b)_{L_v}=1$. \end{proposition}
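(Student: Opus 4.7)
The engine of the proof will be Proposition~\ref{HS: gets powered to degree}, which states that for any finite extension $M/K$ of local fields and any $a,b\in K^{\times}$ one has $(a,b)_M=(a,b)_K^{[M:K]}$. Since the Hilbert symbol takes values in $\{\pm 1\}$, raising to an odd exponent is the identity while raising to an even exponent produces $1$, so the whole argument will reduce to reading the parity of the local degree $[L_v:F_w]$ off the action of the decomposition group on $X_L$, which is exactly the combinatorial data recalled immediately before the statement.

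For part $(a)$, I will fix a finite place $w$ of $F$ and invoke the standard bijection between places $v$ of $L$ above $w$ and orbits of the decomposition group $D_w\le\mathrm{Gal}(N(L)/F)$ acting on $X_L$, with the local degree $[L_v:F_w]$ equal to the length of the corresponding orbit. Applying Proposition~\ref{HS: gets powered to degree} with $M=L_v$, $K=F_w$ and exponent equal to that orbit length will immediately yield $(a,b)_{L_v}=(a,b)_{F_w}$ when the orbit attached to $v$ has odd length, and $(a,b)_{L_v}=1$ when it has even length. The case where every orbit of $D_w$ is of even size then follows by quantifying the second alternative over all $v\mid w$.

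Part $(b)$ will drop out of $(a)$ at once: when $w$ is unramified in $N(L)/F$, the decomposition group $D_w$ is cyclic generated by $\mathrm{Frob}_w$, and its orbits on $X_L$ are precisely the cycles of $\mathrm{Frob}_w$ viewed as a permutation of $X_L$. For part $(c)$ the same multiplicativity formula applies at the archimedean place: if $v$ is a real place of $L$, then $w$ must also be real and $[L_v:F_w]=1$, giving $(a,b)_{L_v}=(a,b)_{F_w}$; if $v$ is complex then $L_v=\mathbb{C}$, on which the Hilbert symbol is identically trivial (consistent with Proposition~\ref{HS: gets powered to degree} applied with the exponent $[L_v:F_w]=2$ in the real-to-complex case, and tautological in the complex-to-complex case).

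The main conceivable obstacle is keeping the parity bookkeeping straight across ramified and unramified places, but this is immediate once the orbit-to-degree dictionary is in place, so I do not expect any serious technical difficulty; the statement is essentially a direct translation of Proposition~\ref{HS: gets powered to degree} through the Galois-theoretic description of the primes of $L$.
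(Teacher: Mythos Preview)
Your proposal is correct and follows exactly the same approach as the paper: both derive parts $(a)$ and $(b)$ directly from Proposition~\ref{HS: gets powered to degree} via the identification of $[L_v:F_w]$ with the length of the corresponding orbit of the decomposition group on $X_L$ (which reduces to cycle lengths of $\mathrm{Frob}_v$ in the unramified case), and both treat part $(c)$ as obvious. The paper's proof is simply a more condensed version of what you wrote.
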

\begin{proof}
Part $(c)$ is obvious. Parts $(a)$ and $(b)$ follow from Proposition \ref{HS: gets powered to degree}, once one recalls that for each prime $v$ 
above $w$,  $[L_{v}:F_w]$ is the size of the corresponding orbit of the decomposition group, which in the unramified case is cyclic and generated by 
the Frobenius element.\end{proof}

Let $G$ be a finite group and $X$   a transitive $G$-set. The elements $g\in G$ are viewed as a permutation on $X$.
Denote by $\textrm{Odd}_X(g)$ the number of   cycles of $g$ having odd length and  define 
\beq{Maramene, haggio visto 'na cosa}{\delta(G,X):=
\frac{\#\{g \in G: \textrm{Odd}_X(g)\neq 0\}}{\#G}.}
For a finite extension of number fields $L/F$  let $X_L$ be the corresponding  set and put 
 $$\delta_{L/F}:=\delta(\text{Gal}(N(L)/F),X_L).$$ We denote the places of $F$  as  $\Omega_F$ and let 
\beq{def:nationwide}{S_0(L/F)=\{v \in \Omega_F: \textrm{ decomposition groups at $v$ in $N(L)/F$ has only orbits of even size}\}. } 
The next result characterizes the finiteness of $S_0(L/F)$:
\begin{proposition} \label{Prop: S0 finite iff delta=1} The set $S_0(L/F)$ is finite if and only if $\delta_{L/F}=1$. Furthermore, 
in this case  $S_0(L/F)$ consists only of finite primes and all of its elements     ramify in $N(L)/F$. 
\begin{proof} If  $\delta_{L/F}=1$ then each element $g\in\text{Gal}(N(L)/F)$ 
 has only an odd cycle when it acts on $X_L$. Hence it suffices to observe that all of the decomposition groups at the infinite places 
and at the unramified finite primes are cyclic and hence  not in $S_0(L/F)$. This gives the first direction. 

Suppose now that $S_0(L/F)$ is finite. By Chebotarev's density theorem the Artin symbols of the primes that are unramified in $N(L)/F$ and    
 in the complement of $S_0(L/F)$  equidistribute in the set of conjugacy classes of $\text{Gal}(N(L)/F)$. Further,   each of these conjugacy classes
only has  elements whose cycle decomposition  never consists entirely of even cycles by the definition of $S_0(L/F)$.
It follows that this property holds with probability $1$ in $\text{Gal}(N(L)/F)$, in other words, $\delta_{L/F}=1$.
\end{proof}\end{proposition}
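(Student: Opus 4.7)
The plan is to translate between cycle decompositions of individual Galois elements and orbit structures of the decomposition groups, exploiting that decomposition groups at archimedean and unramified finite primes are cyclic, and then to feed this through Chebotarev's density theorem.

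For the implication $\delta_{L/F}=1 \Rightarrow S_0(L/F)$ finite, I would first observe that the decomposition group at an archimedean place is generated by a complex conjugation (or is trivial), while the decomposition group at a finite prime unramified in $N(L)/F$ is cyclic, generated by a Frobenius element. In either case, if $g$ generates the decomposition group then its orbits on $X_L$ are precisely the cycles of $g$ viewed as a permutation. Since $\delta_{L/F}=1$ means every element of $\text{Gal}(N(L)/F)$ has at least one odd-length cycle on $X_L$, each archimedean or unramified finite prime has an odd-sized orbit and thus lies outside $S_0(L/F)$. Consequently $S_0(L/F)$ is contained in the finite set of primes ramifying in $N(L)/F$, proving both finiteness and the supplementary assertions.

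For the converse, I would use Chebotarev's density theorem applied to the set of primes of $F$ that are unramified in $N(L)/F$ and lie outside $S_0(L/F)$: this set still has density one in $\Omega_F$, so the Frobenius classes at its members equidistribute over the conjugacy classes of $\text{Gal}(N(L)/F)$. For any such prime $v$, the (cyclic) decomposition group is generated by $\text{Frob}_v$, and $v \notin S_0(L/F)$ forces some orbit on $X_L$ to be of odd size, i.e.\ $\text{Frob}_v$ has an odd cycle on $X_L$. The property ``has an odd cycle on $X_L$'' is conjugation-invariant, and by equidistribution every conjugacy class is hit, so every element of $\text{Gal}(N(L)/F)$ enjoys this property. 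This is the statement $\delta_{L/F}=1$.

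I do not expect any serious obstacle. The only point that requires a little care is the elementary observation that the orbits of the cyclic group $\langle g\rangle$ on $X_L$ coincide with the cycles of the permutation $g$; once this is in hand, both directions reduce to a direct application of Chebotarev and the classification of decomposition groups at archimedean and unramified finite primes.
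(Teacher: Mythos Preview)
Your proposal is correct and follows essentially the same argument as the paper: cyclicity of decomposition groups at archimedean and unramified finite primes reduces the orbit condition to the cycle condition on a generator, and Chebotarev then transfers the density-one statement to the full Galois group. The only difference is cosmetic---you make explicit the identification of orbits of $\langle g\rangle$ with cycles of $g$ and the conjugation-invariance of the odd-cycle property, which the paper leaves implicit.
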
  
\begin{proposition} \label{Prop: br cl as conic}
Let $F$ be a number field and  $\mathfrak{p},\mathfrak{q}$ be two distinct finite primes in  $\mathcal{O}_F$. Then there are infinitely 
many   $(a,b) \in (F^{\times}/F^{\times 2})^2$ such that $(a,b)_{F_{\mathfrak{p}}}=(a,b)_{F_{\mathfrak{q}}}=-1$, 
while for all other   $v\in\Omega_F$ we have    $(a,b)_{F_v}=1$.    \end{proposition}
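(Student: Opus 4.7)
The plan is to combine the Brauer--Hasse--Noether sequence with Hasse's norm theorem. First I would produce one pair $(a_0,b_0)$ with the prescribed local Hilbert symbols using class field theory; then I would vary $a_0$ by global norms from $F(\sqrt{b_0})/F$ to obtain infinitely many such pairs.

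For the first step, consider the element of $\bigoplus_v \mathrm{Br}(F_v)$ equal to $1/2 \in \Q/\Z$ at each of $\mathfrak p$ and $\mathfrak q$ and zero at every other place. Its total invariant sum is $0$, so by the Brauer--Hasse--Noether exact sequence
\[0\longrightarrow \mathrm{Br}(F)\longrightarrow \bigoplus_v \mathrm{Br}(F_v)\longrightarrow \Q/\Z\longrightarrow 0\]
it comes from a (unique) class in $\mathrm{Br}(F)[2]$. Over a number field every $2$-torsion Brauer class is represented by a quaternion algebra, so this class equals $(a_0,b_0)_F$ for some $a_0,b_0 \in F^{\times}$, and by construction the Hilbert symbols $(a_0,b_0)_{F_v}$ realize the desired pattern.

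Next set $K:=F(\sqrt{b_0})$, which is genuinely quadratic because $(a_0,b_0)_{F_\mathfrak p}=-1$ forces $b_0\notin F^{\times 2}$. If $c\in F^{\times}$ satisfies $(c,b_0)_{F_v}=1$ at every place $v$, then bilinearity of the Hilbert symbol yields $(a_0 c, b_0)_{F_v}=(a_0,b_0)_{F_v}$ at each $v$, so $(a_0 c, b_0)$ is still a valid pair. By the Hasse norm theorem for the cyclic extension $K/F$, the condition on $c$ is equivalent to $c\in N_{K/F}(K^{\times})$, so it suffices to exhibit infinitely many square classes inside $N_{K/F}(K^{\times})$.

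By Chebotarev applied to $K/F$, infinitely many primes $\mathfrak p_i$ of $F$ split in $K$; pick a prime $\mathfrak P_i$ of $K$ above each. Since $\mathrm{Cl}(K)$ is finite, by pigeonhole an infinite subsequence of the $\mathfrak P_i$ share a common ideal class. Fixing one such $\mathfrak P_0$ and writing $\mathfrak P_i\mathfrak P_0^{-1}=(\alpha_i)$, the norm $N_{K/F}(\alpha_i)$ generates the fractional ideal $\mathfrak p_i\mathfrak p_0^{-1}$ (because $\mathfrak p_i,\mathfrak p_0$ split) and hence has valuation $+1$ at $\mathfrak p_i$, $-1$ at $\mathfrak p_0$, and $0$ elsewhere. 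Reading valuations shows that a finite product $\prod_i N_{K/F}(\alpha_i)^{\epsilon_i}$ can be a square only if every $\epsilon_i$ is even, so the elements $N_{K/F}(\alpha_i)$ are $\F_2$-linearly independent in $F^{\times}/F^{\times 2}$. The pairs $(a_0\,N_{K/F}(\alpha_i), b_0)$ are therefore pairwise distinct in $(F^{\times}/F^{\times 2})^2$. The most delicate point is the class-group pigeonhole: the individual $\mathfrak P_i$ need not be principal, and one must reduce to a common ideal class before extracting generators whose norms are provably independent modulo squares.
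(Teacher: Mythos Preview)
Your argument is correct, and it takes a genuinely different route from the paper's proof.

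The paper constructs the pair $(\alpha,\beta)$ explicitly by hand. Using Chebotarev for a suitable ray class group of $F$, it finds auxiliary primes $\mathfrak l_1,\mathfrak l_2$ so that $\mathfrak p\mathfrak l_1=(\alpha)$ and $\mathfrak q\mathfrak l_2=(\beta)$ with $\alpha,\beta$ having carefully prescribed local behaviour (totally positive, local squares at the $2$-adic places, generating the unramified quadratic extension at $\mathfrak q$ resp.\ $\mathfrak p$, etc.). The Hilbert symbols are then checked place by place, and Hilbert reciprocity pins down the last symbol at $\mathfrak l_2$. Infinitude comes from varying $\mathfrak l_1,\mathfrak l_2$, which visibly produces independent square classes.

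Your approach trades this explicit construction for structural input: the Brauer--Hasse--Noether exact sequence produces the desired class in $\mathrm{Br}(F)[2]$ in one stroke, and period $=$ index over global fields lets you write it as a quaternion symbol $(a_0,b_0)$. For infinitude you twist $a_0$ by global norms from $F(\sqrt{b_0})$, and your class-group pigeonhole on split primes is a clean way to manufacture infinitely many independent norm classes. Two small remarks: you only use the trivial direction of Hasse's norm theorem (global norms are everywhere local norms), so invoking the full theorem is unnecessary; and the fact that $2$-torsion Brauer classes over number fields are quaternion is indeed classical (period $=$ index is part of Albert--Brauer--Hasse--Noether), so this is a legitimate citation rather than a gap. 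The paper's proof is more self-contained and hands-on; yours is shorter but leans on heavier machinery.
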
 \begin{proof}
We claim that we can find a prime ideal $\mathfrak{l}_1$ of $M$ such that the ideal $\mathfrak{p}\cdot \mathfrak{l}_1$ is principal and admits a generator $\alpha$ with the following properties: \begin{enumerate} \item  $\alpha$ is a local square at every prime above $2$ different from $\mathfrak{p}$. In particular,
if $\mathfrak{p}$ is odd, we demand this at all such primes. \item  $\alpha$ is a local unit locally at $\mathfrak{q}$ such that $F_{\mathfrak{q}}(\sqrt{\alpha})/F_{\mathfrak{q}}$ is the quadratic unramified extension of $F_{\mathfrak{q}}$. \item  $\alpha$ is totally positive. \end{enumerate}

To prove this claim let     $\mathfrak{m}$   be modulus uniquely defined by 
demanding that every infinite place divides $\mathfrak{m}$, 
that for every place $w$ above $2 \mathfrak{q}$ and different from $\mathfrak{p}$
the ideal $w^{3 \cdot e_{L_w}}$ divides exactly $\mathfrak{m}$ and finally that no other place divides $\mathfrak{m}$. 

To this modulus corresponds the ray class group $\text{Cl}(F,\mathfrak{m})$; let $c$ be a class in this group. Observe that the class of $\mathfrak{p}$ in this ray class group is well defined, since $\mathfrak{p}$ does not divide $w$, by construction. Thanks to Chebotarev's density theorem, we can always find an odd prime ideal $\mathfrak{l}_1$ different from $\mathfrak{p},\mathfrak{q}$ such that $\mathfrak{p}\mathfrak{l}_1$ equals $c$. We recall that 
$\text{Cl}(F,\mathfrak{m})$ has   the subgroup $H/\mathcal{O}_F^{\times}$ coming from the principal ideals, where 
$$H=\prod_{\substack{ v|2 \mathfrak{q}\\ v \not \in \{\mathfrak{p}\}}}(\mathcal{O}_F/w^{3  e_{F_w}})^{\times} \times \{\pm 1\}^{v|\infty, \ \text{real}}
.$$
Specialize $c$ to be the $\mathcal{O}_{F}^{\times}$-coset of a class as prescribed in the proposition. 
Upon adjusting the result with a global unit, we obtain the claimed element $\alpha$. 

By   the same argument  we can   find  an odd prime ideal $\mathfrak{l}_2$ different from $\mathfrak{p},\mathfrak{q}$ 
and such that $\mathfrak{q}\mathfrak{l}_2=(\beta)$, that 
$\beta$ is a square also locally at $\mathfrak{l}_1$ and a  unit locally at $\mathfrak{p}$ such that $F_{\mathfrak{p}}(\sqrt{\beta})/F_{\mathfrak{p}}$ is the quadratic unramified extension of $F_{\mathfrak{p}}$. We have thus obtained $\alpha,\beta$ such that  the conic 
$$\alpha  X^2+\beta  Y^2=Z^2 $$
 is non-split at $\mathfrak{p},\mathfrak{q}$ 
as an element of    the $2$-torsion of the Brauer group of $F$. It is split at all infinite places as $\alpha$ is totally positive
and it  is split at $\mathfrak{l}_1$  because $\beta$ is locally a square. It is split at all places above $2$ different from $\mathfrak{p},\mathfrak{q}$ 
since  $\alpha$ is locally a square. Further, at all of the odd places coprime to $\mathfrak{l}_1\mathfrak{l}_2\mathfrak{p}\mathfrak{q}$
it  is also trivial, being the cup product of two unramified classes.
Summarizing, the conic $(\alpha,\beta)$ is locally trivial at all places except $\mathfrak{p},\mathfrak{q}$ and $\mathfrak{l}_1$ and it is non-trivial 
at the first two places. Hence by Hilbert reciprocity it has to be trivial also at $\mathfrak{l}_2$. 
Finally, observe that as we vary $\mathfrak{l}_1,\mathfrak{l}_2$ we get a set of $(\alpha,\beta)$ that is  
 linearly independent in $(\frac{F^{\times}}{F^{\times 2}})^2$, hence,
 it is infinite.\end{proof}

\begin{definition}
For a number field $F$ and  a finite extension $L/F$ we say that $L/F$ is \emph{stable in genus} $0$ when
 for each $a,b\in F^{\times}$ we have   $(a,b)_L=(a,b)_F$.    \end{definition} We next characterize stable extensions.
Denote the $2$-torsion of the Brauer group of a   field $F$ by $\textup{Br}(F)[2]$.
 \begin{theorem} \label{Thm: charact genus 0 stability}
Let $L/F$ be a finite extension of number fields. 
The following are equivalent:\\
$(a)$ The property $(a,b)_F=(a,b)_L$ holds for all  $a,b\in F^{\times}$. \\
$(b)$  $\# S_0(L/F) \leq 1$. \\
$(c)$ One has $\delta_{L/F}=1$ and   at most one  finite prime $v$   ramifying in $N(L)/F$ has a decomposition with only even sized orbits in $X_L$. \\
$(d)$ There are only finitely many elements $a,b$ in $F^{\times}/F^{\times 2}$ such that $(a,b)_F=-1$ and $(a,b)_L=1$. \\
$(e)$ The natural restriction map $\textup{Br}(F)[2] \to \textup{Br}(L)[2]$ is injective. \end{theorem}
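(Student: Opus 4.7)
The plan is to deduce the equivalences $(a)\Leftrightarrow(b)\Leftrightarrow(c)$ and $(a)\Leftrightarrow(d)\Leftrightarrow(e)$ in two stages: first the Galois-theoretic reformulation using Propositions~\ref{prop:tescoexpress}, \ref{Prop: S0 finite iff delta=1} and~\ref{Prop: br cl as conic}, then the upgrade of the conic-level statement $(a)$ to the Brauer-theoretic statements $(d)$ and $(e)$. The equivalence $(b)\Leftrightarrow(c)$ will follow directly from Proposition~\ref{Prop: S0 finite iff delta=1}: $\#S_0(L/F)\leq 1$ in particular makes $S_0(L/F)$ finite and thus forces $\delta_{L/F}=1$ and confines $S_0(L/F)$ to finite primes ramifying in $N(L)/F$, matching the shape of $(c)$ exactly.

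For $(a)\Rightarrow(b)$ I argue by contraposition: if $\#S_0(L/F)\geq 2$, I pick distinct $\fp,\fq\in S_0(L/F)$ and use Proposition~\ref{Prop: br cl as conic} to produce $(a,b)\in(F^\times/F^{\times 2})^2$ whose local Hilbert symbol equals $-1$ exactly at $\fp,\fq$; by Hasse--Minkowski this gives $(a,b)_F=-1$. For every place $v$ of $F$ with $(a,b)_{F_v}=1$ the symbol stays trivial at all $w\mid v$ in $L$, while for $v\in\{\fp,\fq\}$ Proposition~\ref{prop:tescoexpress}(a) forces $(a,b)_{L_w}=1$ at every $w\mid v$ (as all orbits are even), so $(a,b)_L=1$, contradicting $(a)$.

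The converse $(b)\Rightarrow(a)$ is the main technical step. Assume $\#S_0(L/F)\leq 1$, whence $\delta_{L/F}=1$ by the first paragraph. Fix $(a,b)$ with $(a,b)_F=-1$; Hilbert reciprocity makes the set $T$ of places with $(a,b)_{F_v}=-1$ of even positive cardinality, so $\#T\geq 2$ and some $v_0\in T$ lies outside $S_0(L/F)$. I then case-split on $v_0$: archimedean, unramified finite, or ramified finite outside $S_0(L/F)$. In the first two cases the relevant decomposition group is cyclic, so $\delta_{L/F}=1$ applied to its generator yields an odd-length orbit on $X_L$; in the third case the defining property of $v_0\notin S_0(L/F)$ directly provides such an orbit. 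The corresponding place $w_0\mid v_0$ of $L$ then satisfies $(a,b)_{L_{w_0}}=(a,b)_{F_{v_0}}=-1$ by Proposition~\ref{prop:tescoexpress}, giving $(a,b)_L=-1$. The main obstacle is this exhaustive case analysis, especially ruling out at archimedean $v_0$ the scenario that every place of $L$ above $v_0$ is complex (which is exactly what $\delta_{L/F}=1$ applied to complex conjugation prevents).

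For the Brauer-theoretic block: $(a)\Rightarrow(d)$ is immediate since the set in $(d)$ is empty. For $(d)\Rightarrow(a)$, a would-be counterexample $(a_0,b_0)$ spawns an infinite family $(a_0,b_0 n)$ with $n$ ranging over the image in $F^\times/F^{\times 2}$ of $N_{F(\sqrt{a_0})/F}F(\sqrt{a_0})^\times$; this image is a subgroup of index at most $2$ in the infinite group $F^\times/F^{\times 2}$, and each twist has the same quaternion class as $(a_0,b_0)$, hence the same Hilbert symbols over $F$ and over $L$, so the set in $(d)$ is infinite. Finally, $(a)\Leftrightarrow(e)$ follows from the interpretation of $(a,b)_K=1$ as the vanishing of the quaternion class $(a,b)$ in $\textup{Br}(K)[2]$, combined with Albert's classical theorem that every element of $\textup{Br}(F)[2]$ over a number field $F$ is represented by a single quaternion algebra.
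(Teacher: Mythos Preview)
Your argument is correct, but it is organized differently from the paper's and uses two genuinely different sub-arguments. The paper runs a single cycle $(b)\Rightarrow(e)\Rightarrow(a)\Rightarrow(d)\Rightarrow(c)\Rightarrow(b)$; you instead make $(a)$ the hub. For the link to $(e)$, the paper proves $(b)\Rightarrow(e)$ directly at the Brauer-group level (restrict a class, kill it locally outside $S_0$ using an odd local degree, then invoke Hilbert reciprocity and the local--global principle for $\textup{Br}$), never needing that $2$-torsion Brauer classes over number fields are quaternion classes; you instead prove $(b)\Rightarrow(a)$ at the Hilbert-symbol level and then upgrade to $(e)$ via Albert's theorem. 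For the link from $(d)$, the paper feeds the \emph{infinitude} in Proposition~\ref{Prop: br cl as conic} straight into a contradiction of $(d)$; your norm-twist $(a_0,b_0)\mapsto(a_0,b_0 n)$ with $n\in N_{F(\sqrt{a_0})/F}F(\sqrt{a_0})^\times$ is a different and self-contained way to amplify one counterexample into infinitely many, and it does not re-use Proposition~\ref{Prop: br cl as conic}. Both routes are of comparable length; the paper's is slightly more conceptual at the $(e)$-step, yours slightly more algebraic at the $(d)$-step.

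Two small points. In $(a)\Rightarrow(b)$, Proposition~\ref{Prop: br cl as conic} requires two \emph{finite} primes, so you should remark that $\#S_0(L/F)\geq 2$ always furnishes two finite ones: an archimedean place in $S_0$ forces $\delta_{L/F}<1$ (complex conjugation would have no odd cycle), hence $S_0$ is infinite by Proposition~\ref{Prop: S0 finite iff delta=1}. In $(b)\Rightarrow(a)$, your case-split on $v_0$ is unnecessary: $v_0\notin S_0(L/F)$ \emph{by definition} gives an odd orbit of the decomposition group, hence a place $w_0\mid v_0$ with $[L_{w_0}:F_{v_0}]$ odd, and Proposition~\ref{HS: gets powered to degree} finishes --- no appeal to $\delta_{L/F}=1$ is needed.
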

\begin{proof}(b)$\Rightarrow$(e): Let $b$ be an element of $\text{Br}(F)[2]$, 
denote the restriction to $L$ as $\text{Res}_L(b)$ and assume that $\text{Res}_L(b)=0$. Then for all places $v$ of $L$ lying above a place $w$ of $F$, we 
have   $\text{Res}_{L_v}(b)=0$. Furthermore, note that $\text{Res}_{L_v}(b)=\text{Res}_{L_v}(\text{Res}_{F_w}(b))$.
By local class field theory we know that $\text{Res}_{F_w}(b)=(a_1,a_2)_{F_w}$ for   $a_1,a_2$ in $F_w^{\times}$. Then applying 
Proposition~\ref{HS: gets powered to degree} combined with the definition of $S_0(L/F)$ we conclude that $\text{Res}_{F_w}(b)$ vanishes for all $w$ outside 
  $S_0(L/F)$: 
indeed, Proposition \ref{HS: gets powered to degree} tells us that if $w$ is not in $S_0(L/F)$ then there is a place $v$ of $L$ above $w$ with 
$[L_v:F_w]$ odd, and thus $(a_1,a_2)_{F_w}=(a_1,a_2)_{L_v}=1$. Recall that     $S_0(L/F)$ has at most one element. By Hilbert reciprocity, we conclude
 that  $b$ restricts to $0$ locally at every place of $F$.
Therefore, by the local to global principle for the Brauer group, it follows that $b$ is $0$ as an element of $\text{Br}(F)$, giving the desired conclusion. 
The directions (e)$\Rightarrow$(a)$\Rightarrow$(d) are obvious.\\
(d)$\Rightarrow$(c): We proceed by contradiction. First suppose that $\delta(L/F)<1$. Then $S_0(L/F)$ is infinite thanks to 
Proposition~\ref{Prop: S0 finite iff delta=1}, thus,  we can find two  finite primes $\mathfrak{p},\mathfrak{q}$ in $S_0(L/F)$.  
Proposition~\ref{Prop: br cl as conic} produces infinitely many $a,b\in F^{\times}/F^{\times 2}$ such that $(a,b)_F$ is locally 
non-trivial precisely at $\mathfrak{p},\mathfrak{q}$ and nowhere else among the places of $F$. Therefore, combining the definition of $S_0(L/F)$ with 
Proposition \ref{HS: gets powered to degree}, we find   that $(a,b)_L$ vanishes at all primes above $\mathfrak{p},\mathfrak{q}$ and everywhere else. We 
have produced infinitely many pairs $(a,b)$ in $(F^{\times}/F^{\times 2})^{2}$ such that $(a,b)_F=-1$ but $(a,b)_L=1$. This is impossible if (d) holds. 
Therefore, we have shown by contradiction that   (d) implies   $\delta_{L/F}=1$. Furthermore, our argument has shown more generally that if (d) holds then 
$S_0(L/F)$ cannot contain two distinct finite primes of $M$. This proves that if $(d)$ holds then $(c)$ has to hold as well. \\
(c)$\Rightarrow$(b):  In view of Proposition~\ref{Prop: S0 finite iff delta=1},
$\delta_{L/F}=1$ implies that $S_0(L/F)$ consists only of finite 
places. But (c) prevents $S_0(L/F)$ from containing more than one finite prime. Therefore,  $\#S_0(L/F) \leq 1$, which concludes the proof.\end{proof}

The following corollary provides further information on $\delta_{L/F}$. 
\begin{corollary} \label{cor:lastoneok?}For any finite extension of number fields  $L/F$ we have   
$\delta_{L/F}>0$. Furthermore: \\ $(a)$ If $[L:F]$ is odd then $\delta_{L/F}=1$ and $(a,b)_L=(a,b)_F$ for all $a, b \in F^\times$. \\
$(b)$ If $L/F$ is Galois, then  $$\delta_{L/F}=\frac{\#\{g \in \textup{Gal}(L/F): 2 \nmid  \textup{ord}(g)\}}{\#\textup{Gal}(L/F)}.
$$In particular, if $L/F$ is Galois then 
 $2\nmid [L:F]\iff\delta_{L/F}<1$.\end{corollary}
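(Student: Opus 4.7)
The plan is to address the three claims in turn, using Proposition \ref{Prop: S0 finite iff delta=1} and Theorem \ref{Thm: charact genus 0 stability} as the main inputs. For $\delta_{L/F}>0$, I would observe that the identity element of $G:=\textup{Gal}(N(L)/F)$ fixes every element of $X_L$, producing $\#X_L$ orbits of length $1$ (odd). Thus the numerator defining $\delta_{L/F}$ counts at least the identity, giving $\delta_{L/F}\geq 1/\#G>0$.

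For part (a), when $[L:F]$ is odd the crucial observation is that for any subgroup $H\leq G$, the orbit sizes of $H$ on $X_L$ sum to $\#X_L=[L:F]$, an odd number. In particular no such $H$ can have only even-sized orbits, so specializing $H=\langle g\rangle$ for an arbitrary $g\in G$ yields $\delta_{L/F}=1$, and specializing $H$ to be a decomposition group at any place $v$ of $F$ shows $S_0(L/F)=\emptyset$. The equality $(a,b)_L=(a,b)_F$ for all $a,b\in F^\times$ then follows from the implication (b)$\Rightarrow$(a) of Theorem \ref{Thm: charact genus 0 stability} applied with $\#S_0(L/F)=0\leq 1$.

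For part (b), since $L/F$ is Galois we have $N(L)=L$ and $G=\textup{Gal}(L/F)$. Fixing a primitive element $\alpha$ of $L/F$, the map $\sigma\mapsto\sigma(\alpha)$ is a $G$-equivariant bijection between $G$ (acting on itself by left translation) and $X_L$; since $\alpha$ is primitive, the stabilizer of $\alpha$ is trivial, so the action is the regular action. Under left multiplication on $G$, every $g\in G$ decomposes into $\#G/\textup{ord}(g)$ cycles all of length $\textup{ord}(g)$. Hence $g$ has an odd-length orbit on $X_L$ if and only if $\textup{ord}(g)$ is odd, yielding the stated formula for $\delta_{L/F}$.

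For the final equivalence (which in the statement contains a sign slip: the correct reading is $2\nmid[L:F]\iff\delta_{L/F}=1$, equivalently $2\mid[L:F]\iff\delta_{L/F}<1$), I would argue as follows. If $\#G$ is odd, Lagrange's theorem forces every $\textup{ord}(g)$ to be odd, whence $\delta_{L/F}=1$ by the formula just established; conversely if $\#G$ is even, Cauchy's theorem produces an element of order $2$, which therefore fails the odd-order condition and forces $\delta_{L/F}<1$. The only step requiring genuine care is verifying $S_0(L/F)=\emptyset$ in part (a), but this is just the same parity argument that proves $\delta_{L/F}=1$ applied to decomposition groups rather than cyclic subgroups.
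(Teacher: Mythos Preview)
Your proof is correct. Parts (b) and the final equivalence match the paper's argument (regular action, Cauchy's theorem), and you are right that the printed statement has a sign slip: the intended equivalence is $2\nmid[L:F]\iff\delta_{L/F}=1$.

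For part (a) you take a different route than the paper. The paper invokes the classical restriction--corestriction identity in Galois cohomology: since $\textup{cores}\circ\textup{res}$ is multiplication by $[L:F]$ on $\textup{Br}(F)[2]$, odd degree forces the restriction map $\textup{Br}(F)[2]\to\textup{Br}(L)[2]$ to be injective, which is condition (e) of Theorem~\ref{Thm: charact genus 0 stability}; the conclusions $\delta_{L/F}=1$ and $(a,b)_L=(a,b)_F$ then follow from the equivalences there. Your argument is more elementary: the parity observation that orbit sizes of any subgroup of $G$ on $X_L$ sum to the odd number $[L:F]$ immediately gives both $\delta_{L/F}=1$ and $S_0(L/F)=\emptyset$, after which (b)$\Rightarrow$(a) of Theorem~\ref{Thm: charact genus 0 stability} finishes. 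Your approach avoids cohomology entirely and in fact yields the slightly stronger $S_0(L/F)=\emptyset$ rather than just $\#S_0(L/F)\le 1$; the paper's approach has the virtue of being the standard structural explanation and generalizes to higher torsion in the Brauer group.
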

\begin{proof}The fact that restriction composed with co-restriction, from $F$ to $L$, induces multiplication by $[L:F]$ in cohomology, shows that the 
restriction map $\text{Br}(F)[2] \to \text{Br}(L)[2]$ is injective when $[L:F]$ is odd. Therefore, by Theorem~\ref{Thm: charact genus 0 stability} part (a) holds.  
For part (b), note that since $L/F$ is Galois, the set $X_L$ has the regular $\text{Gal}(L/F)$-action. Hence, the length of the cycle of each element $g$ in 
$\text{Gal}(L/F)$   equals $\text{ord}(g)$, and thus the formula for $\delta_{L/F}$ follows. 
The final statement is then an immediate consequence of the fact that every group of even order admits an element of order   $2$, which is a 
special case of a well-known theorem of Cauchy.\end{proof}

\subsection{Uniform Chebotarev error terms} 
Given a subset of the primes $\c A$, can we characterize   the Dirichlet characters $\chi$
for which the average of  $\chi(p)$ exhibits cancellation as $p$ ranges over $\c A$? In this subsection we     use 
arguments from class field theory to answer this for certain `algebraic' $\c A$. 
Furthermore, we shall give uniform error terms by using work of 
Thorner and Zaman~\cite{thorn}. For this it is necessary to
prove    discriminant bounds; these are given in   Propositions~\ref{prop: disc under extensions}-\ref{prop: upper bound on disc}.

Let $M_2/M_1$ be a finite extension of number fields of degree $n$. We denote by $\text{Disc}(M_2/M_1)$ the discriminant ideal of $M_2$ over $M_1$. This is the $\c O_{M_1}$-ideal generated by $\text{Disc}(e_1, \ldots,e_n)$, as $\{e_1, \ldots, e_n\}$ runs over $n$-sets in $\mathcal{O}_{M_2}$ and where $\text{Disc}(e_1, \ldots,e_n)$ is the determinant of the Gram matrix whose $(i,j)$-th entry is $\langle e_i, e_j \rangle=\text{Tr}_{M_2/M_1}(e_ie_j)$.

The following basic property can be found in~\cite[Chapter III, Proposition 8]{Serre:loc fields}. 
\begin{proposition}\label{prop: disc under extensions}
Let $M_3 \supseteq M_2 \supseteq M_1$ be finite extensions of number fields. Then 
$$\textup{Disc}(M_3/M_1)=\textup{Disc}(M_2/M_1)^{[M_3:M_2]}   N_{M_2/M_1}(\textup{Disc}(M_3/M_2)). 
$$ \end{proposition}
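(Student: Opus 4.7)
The plan is to reduce the identity, which is an equality of ideals of $\mathcal{O}_{M_1}$, to a statement about the \emph{different} ideal in a tower, and then take norms. The key tool is the transitivity of the different: for a tower $M_3/M_2/M_1$ of finite extensions of number fields, one has the equality of fractional ideals of $\mathcal{O}_{M_3}$
\[
\mathfrak{d}_{M_3/M_1} \;=\; \mathfrak{d}_{M_3/M_2}\cdot\bigl(\mathfrak{d}_{M_2/M_1}\mathcal{O}_{M_3}\bigr),
\]
where $\mathfrak{d}_{N/K}$ denotes the different of $N/K$. This is a standard local computation once one knows that the different can be characterized via the trace pairing on $\mathcal{O}_N$.

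With the transitivity in hand, my plan is to apply $N_{M_3/M_1}=N_{M_2/M_1}\circ N_{M_3/M_2}$ to both sides, using that the discriminant is the norm of the different, namely $\textup{Disc}(N/K)=N_{N/K}(\mathfrak{d}_{N/K})$. Applying $N_{M_3/M_2}$ to the right side gives
\[
N_{M_3/M_2}\bigl(\mathfrak{d}_{M_3/M_2}\bigr) \cdot N_{M_3/M_2}\bigl(\mathfrak{d}_{M_2/M_1}\mathcal{O}_{M_3}\bigr)
= \textup{Disc}(M_3/M_2)\cdot \mathfrak{d}_{M_2/M_1}^{[M_3:M_2]},
\]
where I use the elementary fact that the norm from $M_3$ to $M_2$ of an extended ideal $\mathfrak{a}\mathcal{O}_{M_3}$ with $\mathfrak{a}\subset\mathcal{O}_{M_2}$ equals $\mathfrak{a}^{[M_3:M_2]}$. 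Then applying $N_{M_2/M_1}$ and using multiplicativity of the norm on fractional ideals yields the claimed tower formula.

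Because everything above respects localization, I could equivalently reduce first to the completion at each prime $\mathfrak{p}$ of $M_1$, where the transitivity of the different is an entirely local statement about complete discrete valuation rings and is handled exactly as in~\cite[Chapter III, Proposition 8]{Serre:loc fields}. The main conceptual input is the transitivity of the different; beyond that, only formal properties of norms of ideals are needed, so there is no serious obstacle to overcome. The proof is therefore a bookkeeping exercise once one accepts the characterization of the discriminant as the norm of the different and the compatibility of differents in towers.
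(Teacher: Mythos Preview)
Your argument is correct and is precisely the standard proof: transitivity of the different $\mathfrak{d}_{M_3/M_1}=\mathfrak{d}_{M_3/M_2}\cdot(\mathfrak{d}_{M_2/M_1}\mathcal{O}_{M_3})$, followed by $N_{M_3/M_1}=N_{M_2/M_1}\circ N_{M_3/M_2}$ and the identity $N_{M_3/M_2}(\mathfrak{a}\mathcal{O}_{M_3})=\mathfrak{a}^{[M_3:M_2]}$ for $\mathfrak{a}\subset\mathcal{O}_{M_2}$. The paper does not supply its own proof of this proposition; it simply records the statement and refers to \cite[Chapter III, Proposition 8]{Serre:loc fields}, which is exactly the source you invoke, so your write-up is in fact more detailed than what appears in the paper.
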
The next proposition gives control on the discriminant of a compositum of extensions. 
\begin{proposition} \label{Prop: disc for linearly disjoint}
Let $L_1,L_2$ be number fields inside a given separable closure of $\Q$, both containing a common number field $M$ and with $[L_1L_2:M]=[L_1:M][L_2:M]$. Then $\textup{Disc}(L_1L_2/M)$ divides  $$\textup{Disc}(L_1/M)^{[L_2:M]} \textup{Disc}(L_2/M)^{[L_1:M]}. $$\end{proposition}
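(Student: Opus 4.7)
The plan is to reduce the divisibility to a local statement at each prime $\mathfrak{p}$ of $\mathcal{O}_M$ and then exploit linear disjointness through a Kronecker-product argument on Gram matrices. Since the discriminant ideal of a number field extension is determined by its localizations, it suffices to prove, for each prime $\mathfrak{p}$ of $\mathcal{O}_M$, the inequality
$$v_\mathfrak{p}\bigl(\textup{Disc}(L_1 L_2/M)\bigr) \leq [L_2 : M]\, v_\mathfrak{p}\bigl(\textup{Disc}(L_1/M)\bigr) + [L_1 : M]\, v_\mathfrak{p}\bigl(\textup{Disc}(L_2/M)\bigr).$$

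First I would localize at $\mathfrak{p}$: the ring $\mathcal{O} := \mathcal{O}_{M,\mathfrak{p}}$ is a DVR, hence a PID, so the integral closures of $\mathcal{O}$ in $L_1$, $L_2$, and $L_1 L_2$ are free $\mathcal{O}$-modules of ranks $n_1 := [L_1 : M]$, $n_2 := [L_2 : M]$, and $n_1 n_2$ respectively (the last rank uses the assumption $[L_1 L_2 : M] = n_1 n_2$). Choosing $\mathcal{O}$-bases $\{e_1, \dots, e_{n_1}\}$ and $\{f_1, \dots, f_{n_2}\}$ of the first two, the $n_1 n_2$ products $\{e_i f_j\}$ all lie in the integral closure of $\mathcal{O}$ inside $L_1 L_2$, and hence form one specific $n_1 n_2$-element subset that appears in the generating set for $\textup{Disc}(L_1 L_2/M)_\mathfrak{p}$. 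Consequently the local discriminant ideal divides the single discriminant $\textup{Disc}(\{e_i f_j\})$, and the task reduces to computing the latter.

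Here the hypothesis $[L_1 L_2 : M] = [L_1 : M][L_2 : M]$ plays its essential role: it is exactly linear disjointness, and hence gives an $M$-algebra isomorphism $L_1 \otimes_M L_2 \xrightarrow{\sim} L_1 L_2$. From this one deduces the multiplicativity of the trace,
$$\textup{Tr}_{L_1 L_2/M}(xy) = \textup{Tr}_{L_1/M}(x)\, \textup{Tr}_{L_2/M}(y), \qquad x \in L_1,\ y \in L_2.$$
It follows that the Gram matrix of $\{e_i f_j\}$ with respect to the trace form of $L_1 L_2/M$ is the Kronecker product $G_1 \otimes G_2$ of the Gram matrices of $\{e_i\}$ and $\{f_j\}$, whose determinant equals $\det(G_1)^{n_2}\, \det(G_2)^{n_1}$. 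Since $\{e_i\}$ and $\{f_j\}$ are local $\mathcal{O}$-bases of the corresponding integral closures, $(\det G_1)$ and $(\det G_2)$ generate $\textup{Disc}(L_1/M)_\mathfrak{p}$ and $\textup{Disc}(L_2/M)_\mathfrak{p}$ respectively. This yields the local divisibility, and reassembling over all $\mathfrak{p}$ gives the global claim.

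The main obstacle I anticipate is verifying the multiplicativity of the trace under the linearly disjoint compositum; this is precisely where the hypothesis $[L_1 L_2 : M] = n_1 n_2$ is used, since otherwise $L_1 \otimes_M L_2$ need not be a field and the Gram matrix of $\{e_i f_j\}$ will not factor as a Kronecker product. Once that identity is established, the Kronecker-product identity $\det(A \otimes B) = \det(A)^{n_2}\det(B)^{n_1}$ and the local-to-global passage through Proposition~\ref{prop: disc under extensions}'s style of reasoning are routine bookkeeping.
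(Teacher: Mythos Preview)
Your proposal is correct and follows essentially the same approach as the paper: both arguments rest on the observation that the Gram matrix of a product set $\{e_i f_j\}$ is the Kronecker product $G_1 \otimes G_2$, together with the determinant identity $\det(A \otimes B) = \det(A)^{n_2}\det(B)^{n_1}$. The only difference is that the paper works globally with the definition of the discriminant ideal as generated by discriminants of arbitrary $n$-sets (so no bases are needed and no localization is required), whereas you first localize at each prime to obtain free modules and genuine bases; this extra step is harmless but unnecessary, since taking product sets already shows that every generator of $\textup{Disc}(L_1/M)^{[L_2:M]}\textup{Disc}(L_2/M)^{[L_1:M]}$ lies in $\textup{Disc}(L_1L_2/M)$.
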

\begin{proof} As special sets of size $[L_1L_2:M]=[L_1:M]\cdot [L_2:M]$, we can pick
 product sets of a choice of a $[L_1:M]$-set in $L_1$ and a $[L_2:M]$-set in $L_2$. The resulting Gram matrix is the tensor product of the two respective 
matrices. Further, for any two matrices $A,B$ one has
$$\text{det}(A \otimes B)=\text{det}(A)^{\text{ord}(B)}  \text{det}(B)^{\text{ord}(A)},$$
where $\text{ord}(-)$ is the function that sends a $j \times j$ matrix to $j$. We conclude that every element of the ideal 
$\textup{Disc}(L_1/M)^{[L_2:M]} \textup{Disc}(L_2/M)^{[L_1:M]}$ is inside the ideal $\textup{Disc}(L_1L_2/M)$, hence, the latter divides the former. \end{proof}

We conclude with a rough upper bound for the discriminant of a compositum.
\begin{proposition} \label{prop: upper bound on disc} 
For any Galois number fields $L_1,L_2$        inside a separable closure of $\Q$ we have 
$$|\textup{Disc}(L_1L_2/\Q)| \leq \frac{|\textup{Disc}(L_1/\Q)|^{[L_2:L_1\cap L_2]}  
|\textup{Disc}(L_2/\Q)|^{[L_1:L_1\cap L_2]}}{|\textup{Disc}(L_1\cap L_2/\Q)|^{ [L_1:L_1\cap L_2][L_2:L_1\cap L_2] }} .$$
\end{proposition}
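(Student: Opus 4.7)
Set $K:=L_1\cap L_2$, and write $n_i:=[L_i:K]$ so that the statement asks for $|\textup{Disc}(L_1L_2/\Q)|\leq |\textup{Disc}(L_1/\Q)|^{n_2}|\textup{Disc}(L_2/\Q)|^{n_1}|\textup{Disc}(K/\Q)|^{-n_1n_2}$. The first step is to observe that since $L_1/\Q$ and $L_2/\Q$ are Galois, one has $L_1\cap L_2=K$ and the compositum $L_1L_2$ is linearly disjoint from each factor over $K$; in particular $[L_1L_2:K]=n_1n_2$. This is exactly the hypothesis needed to invoke Proposition~\ref{Prop: disc for linearly disjoint} relative to the base $M=K$, which yields
\[
\textup{Disc}(L_1L_2/K)\ \Big|\ \textup{Disc}(L_1/K)^{n_2}\,\textup{Disc}(L_2/K)^{n_1}
\]
as an ideal in $\mathcal{O}_K$. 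Taking absolute norms from $K$ down to $\Q$ preserves this divisibility (and multiplicativity), so
\[
|N_{K/\Q}\textup{Disc}(L_1L_2/K)|\ \leq\ |N_{K/\Q}\textup{Disc}(L_1/K)|^{n_2}\,|N_{K/\Q}\textup{Disc}(L_2/K)|^{n_1}.
\]

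The second step is to convert all discriminants over $K$ into discriminants over $\Q$ by applying the conductor--discriminant tower relation of Proposition~\ref{prop: disc under extensions} to the three towers $\Q\subset K\subset L_i$ for $i=1,2$ and $\Q\subset K\subset L_1L_2$. For the individual fields this gives
\[
|N_{K/\Q}\textup{Disc}(L_i/K)|=\frac{|\textup{Disc}(L_i/\Q)|}{|\textup{Disc}(K/\Q)|^{n_i}}\qquad(i=1,2),
\]
while for the compositum it gives $|\textup{Disc}(L_1L_2/\Q)|=|\textup{Disc}(K/\Q)|^{n_1n_2}\,|N_{K/\Q}\textup{Disc}(L_1L_2/K)|$, using $[L_1L_2:K]=n_1n_2$ from the Galois disjointness above.

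The third step is purely algebraic: substitute the two individual-field identities into the norm inequality, then multiply by the factor $|\textup{Disc}(K/\Q)|^{n_1n_2}$ coming from the tower formula for $L_1L_2$. The powers of $|\textup{Disc}(K/\Q)|$ combine as $n_1n_2-n_1n_2-n_1n_2=-n_1n_2$, which is precisely the exponent claimed in the statement, and the exponents of $|\textup{Disc}(L_i/\Q)|$ come out as $n_{3-i}=[L_{3-i}:K]$. No obstacle is anticipated: the only subtlety is the linear disjointness step, which is where the Galois hypothesis on $L_1,L_2$ is essential; without it one could not guarantee $[L_1L_2:K]=n_1n_2$ and the reduction to Proposition~\ref{Prop: disc for linearly disjoint} would break down.
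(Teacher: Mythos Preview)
Your proof is correct and follows essentially the same approach as the paper: both apply Proposition~\ref{Prop: disc for linearly disjoint} over the base $K=L_1\cap L_2$, take norms down to $\Q$, and then use the tower formula of Proposition~\ref{prop: disc under extensions} three times (for $L_1$, $L_2$, and $L_1L_2$ over $K$ over $\Q$) to rewrite everything in terms of absolute discriminants. The only cosmetic difference is that the paper groups the $|\textup{Disc}(K/\Q)|$ factors asymmetrically into auxiliary quantities $\xi_1,\xi_2$, whereas you keep the two sides symmetric throughout; the underlying computation is identical.
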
 \begin{proof}  Let $M=L_1\cap L_2$. By Proposition \ref{prop: disc under extensions} with  $M_3=L_1L_2, M_2= M, M_1= \Q$ we get  
$$|\text{Disc}(L_1L_2/\Q)|=|\text{Disc}(M/\Q)|^{[L_1L_2:M]} \  |N_{M/\Q}(\text{Disc}(L_1L_2/M))|.$$ We bound $N_{M/\Q}$
by using Proposition \ref{Prop: disc for linearly disjoint} and  $[L_1L_2:M]=[L_1:M][L_2:M]$. Thus,  
 $$|\text{Disc}(L_1L_2/\Q)| \leq \xi_1^{[L_2:M]}\xi_2^{[L_1:M]} ,$$ 
where $ \xi_1 =|\text{Disc}(M/\Q)|^{[L_1:M]}  |N_{M/\Q}(\text{Disc}(L_1/M))|$
and $\xi_2=  |N_{M/\Q}(\text{Disc}(L_2/M))|$. Using Proposition \ref{prop: disc under extensions} with  $M_3=L_1, M_2= M, M_1= \Q$
we obtain  $\xi_1 = |\text{Disc}(L_1/\Q)|$. Furthermore, 
$$\xi_2=  \frac{|\text{Disc}(L_2/\Q)| }{|\text{Disc}(M/\Q)|^{ [L_2:M]}}
$$ by    Proposition \ref{prop: disc under extensions} with  $M_3=L_2, M_2= M, M_1=\Q$. 
\end{proof}

Every primitive Dirichlet character $\chi\md n$  has an associated field given  in~\cite[pg.21]{wash} as   
$$F(\chi):= \{a\in \Q(\zeta_n): ga=a \ \forall g\in \mathrm{Ker}(\chi)\} ,$$
where $\zeta_n$ denotes a $n$-th root of unity and 
the kernel is defined by  viewing $\chi$ as a character of the cyclotomic field $\Q(\zeta_n)/\Q$. 
Assume that we are given a finite Galois extension
$k/\Q$  and   a     union $S$ of conjugacy classes of $\mathrm{Gal}(k/\Q)$. 
Lastly, assume that $\c A$ is a subset of primes with the property that 
every  $p$ that  is unramified in $k/\Q$ is in   $  \c A$
if and only if $\mathrm{Frob}(p,k/\Q)\in S$. 

Next, we introduce the constant $m(\chi,k,S)$ that turn to     be  the mean of  $\chi(p)$ as $p$ ranges over $\c A$.
Define the compositum $E=k\cdot F(\chi)$ and note that $E/\Q$ is Galois since both $k$ and $F$ are.
The group $\mathrm{Gal}(E/\Q)$ embeds as a subgroup of the direct product $\mathrm{Gal}(E/\Q) \times \mathrm{Im}(\chi)$, with both projections being surjective. Hence, every conjugacy class $C$ of $E/\Q$ can be written uniquely as $c\times \{\lambda\}$ for some
 uniquely defined conjugacy class $c$ of $\mathrm{Gal}(k/\Q)$ and  $\lambda\in \mathrm{Im}(\chi)$ because $ \mathrm{Im}(\chi)$ is abelian.
Considering  the first coordinate projection provides us with a  well-defined surjective map 
 $$\pi:\{\text{conjugacy classes } C \text{ of} \ \text{Gal}(E/\Q) \} \to \{\text{conjugacy classes } c \text{ of} \  \text{Gal}(k/\Q) \}.$$
Similarly, the second coordinate projection gives   a well-defined surjective map 
$$\{\text{conjugacy classes } C \text{ of} \ \text{Gal}(E/\Q) \} \to \text{Im}(\chi),
$$which  we denote as $C \mapsto \chi(C)$. 
We can then define $$m(\chi,k,S) :=\frac{1}{\#\text{Gal}(E/\Q)}
\sum_{\substack{C: \ \pi(C) \in S}} \chi(C) \#C ,$$ 
where the sum is over   conjugacy classes $C$ of $\mathrm{Gal}(E/\Q)$ such that $\pi(C) \in S$.
\begin{lemma}\label{lem:scarlatti1}For $\chi, k,S,\c A$ as above we have 
$$\lim_{x\to\infty} \frac{1}{\#\{\mathrm{prime } \ p\leq x\}} \sum_{\substack{p\mathrm{ \ unramified \ in \  } k\\p\leq x, p\in \c A }}\chi(p)=
m(\chi,k,S).$$\end{lemma}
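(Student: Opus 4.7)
The plan is to apply the classical Chebotarev density theorem to the compositum $E = k \cdot F(\chi)$, which is a finite Galois extension of $\Q$ containing both $k$ and $F(\chi)$. The key idea is that for all but finitely many primes $p$ (namely those unramified in $E$), both membership in $\c A$ and the value $\chi(p)$ are controlled by $\mathrm{Frob}(p, E/\Q)$, and the sum can be decomposed into Chebotarev sums over conjugacy classes of $\mathrm{Gal}(E/\Q)$.

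First I would recall from~\cite[pg.~21]{wash} that $\chi$ factors through $\mathrm{Gal}(F(\chi)/\Q)$, so for $p$ unramified in $F(\chi)$ the value $\chi(p)$ depends only on the conjugacy class of $\mathrm{Frob}(p, F(\chi)/\Q)$. For $p$ unramified in $E$, the conjugacy class $\mathrm{Frob}(p, E/\Q)$ determines both $\mathrm{Frob}(p, k/\Q)$ and $\mathrm{Frob}(p, F(\chi)/\Q)$ through the two natural projections. In the notation established above the statement, a conjugacy class $C$ of $\mathrm{Gal}(E/\Q)$ is of the form $c \times \{\lambda\}$ with $c = \pi(C)$ a conjugacy class in $\mathrm{Gal}(k/\Q)$ and $\lambda = \chi(C) \in \mathrm{Im}(\chi)$. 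Thus the conditions ``$p$ unramified in $k$ and $p\in \c A$'' translate (up to the finite set of primes ramified in $E$ but not in $k$, which contributes $O(1)$) to $\pi(\mathrm{Frob}(p,E/\Q)) \in S$, and the value $\chi(p)$ equals $\chi(\mathrm{Frob}(p,E/\Q))$.

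Grouping primes by their Frobenius class in $E$ and writing $\pi(x)=\#\{p\le x\}$, I would then obtain
$$\sum_{\substack{p\le x,\ p\in \c A\\ p\text{ unram.\ in }k}} \chi(p) = \sum_{\substack{C\text{ conj.\ class of }\mathrm{Gal}(E/\Q)\\ \pi(C)\in S}} \chi(C)\cdot \#\{p\le x:\mathrm{Frob}(p,E/\Q)=C\} + O(1).$$
By the Chebotarev density theorem applied to $E/\Q$, each inner count equals $\tfrac{\#C}{\#\mathrm{Gal}(E/\Q)}\pi(x) + o(\pi(x))$. Dividing by $\pi(x)$ and letting $x\to\infty$ gives exactly the definition of $m(\chi,k,S)$, concluding the proof.

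There is no real obstacle here; the only item requiring care is the bookkeeping of conjugacy classes of $\mathrm{Gal}(E/\Q)$ via the two coordinate projections $\pi$ and $\chi(\cdot)$, which uses that $\mathrm{Im}(\chi)$ is abelian so that $\chi$ descends to a class function. I should also note that for the uniform error term promised in the surrounding discussion (via~\cite{thorn}) one would replace classical Chebotarev by its effective form together with the discriminant estimates of Propositions~\ref{prop: disc under extensions}--\ref{prop: upper bound on disc}; however for the qualitative limit stated in this lemma classical Chebotarev suffices.
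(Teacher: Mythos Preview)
Your proposal is correct and follows essentially the same route as the paper: pass to the compositum $E=k\cdot F(\chi)$, observe that for primes unramified in $E$ both $\chi(p)$ and the condition $p\in\c A$ are read off from $\mathrm{Frob}(p,E/\Q)$ via the two projections $\pi$ and $\chi(\cdot)$, group by conjugacy class, apply Chebotarev, and divide by $\pi(x)$. Your remark that the effective version (needed later) would instead invoke Thorner--Zaman with the discriminant bounds is also exactly what the paper does in the subsequent quantitative lemma.
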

\begin{proof}For a prime $p$ that unramified in $E/\Q$ we  have
 $$\chi(p)=\chi(\text{Frob}(p,E/\Q))\textrm{ and }\text{Frob}(p,k/\Q)=\pi(\text{Frob}(p,E/\Q)).$$ 
Hence, by the definition of $\c A$ we have $$ \sum_{\substack{p\leq x  \\p\in \c A }}\chi(p)
=\sum_{C: \pi(C)\in S} \sum_{\substack{p\leq x  \\ \text{Frob}(p,E/\Q) \in C }}\chi(p)
+O(1)=\sum_{C: \pi(C)\in S} \chi(C) \sum_{\substack{p\leq x  \\ \text{Frob}(p,E/\Q) \in C }}1
+O(1),$$ where   $O(1)$    takes into account the ramified primes and  
     $C$ runs over conjugacy classes of $\mathrm{Gal}(E/\Q)$. 
By Chebotarev's density theorem we then obtain $$\sum_{C: \pi(C)\in S} \chi(C) \frac{\#C\#\{p\leq x\}}{\#\mathrm{Gal}(E/\Q)}
+o(\#\{p\leq x\}).$$  Dividing by the number of primes up to $x$ concludes the proof.
\end{proof}

\begin{remark}\label{rem:wellknown}Let $F$ be a field, let $F^{\text{sep}}$ be a separable closure
and let $L_1,L_2$ be     Galois extensions of $F$ that are both contained in  $F^{\text{sep}}$.
The  fibered product of the two Galois groups over the intersection is denoted   
$\text{Gal}(L_1/F) \times_{\text{Gal}(L_1 \cap L_2/F)} \text{Gal}(L_2/F)$ and defined     as
$$\{(g_1,g_2) \in \text{Gal}(L_1/F) \times \text{Gal}(L_2/F): {g_1}_{|L_1 \cap L_2}={g_2}_{L_1 \cap L_2} \}.
$$ Let us see why the    map  $g \mapsto (g_{|L_1},g_{|L_2})$ gives  a natural identification 
$$\text{Gal}(L_1 L_2/F) \simeq \text{Gal}(L_1/F) \times_{\text{Gal}(L_1 \cap L_2/F)} \text{Gal}(L_2/F).$$
Indeed, note that each $g$ acts identically on the intersection, irrespective of whether it is first restricted from
$L_1$ or from $L_2$. Consequently, we deduce that the image lies within $$\text{Gal}(L_1/F) \times_{\text{Gal}(L_1 \cap L_2/F)} \text{Gal}(L_2/F).$$
To prove injectivity of the map, we use the fact that any automorphism extends to further Galois extensions, therefore, 
the claim is   reduced to the case of the direct product, which is straightforward.\end{remark}
For a Galois extension $k/\Q$ we denote by $T_k$  the finite group of Dirichlet characters coming from $k$, that is those 
characters corresponding to cyclic extensions of $\Q$ sitting inside $k$.
Alternatively, $T_k$ is the finite group  of Dirichlet characters with $F(\chi) \subseteq k$. Note that       $\#T_k \leq \#\text{Gal}(k/\Q)^{\text{ab}}$, where 
$G^{\text{ab}}$ denotes the abelianization of a group $G$.

\begin{lemma}\label{lem:scarlatti2}Fix $k,S,\c A$ as above. 
For each primitive Dirichlet       $\chi \notin T_k$ we have    $ m(\chi,k,S)=0$.\end{lemma}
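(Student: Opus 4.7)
The plan is to unfold the definition of $m(\chi,k,S)$ using the fibered-product description of $\mathrm{Gal}(E/\Q)$ from Remark~\ref{rem:wellknown}, and then apply character orthogonality on the kernel of the first-coordinate projection.

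First I would convert the sum over conjugacy classes into a sum over elements. Since $\mathrm{Im}(\chi)$ is abelian, $\chi$ (viewed via the second projection) is constant on each conjugacy class $C$ of $\mathrm{Gal}(E/\Q)$, hence $\chi(C)\cdot\#C=\sum_{g\in C}\chi(g)$. Using the identification
\[
\mathrm{Gal}(E/\Q)\;\simeq\;\mathrm{Gal}(k/\Q)\times_{\mathrm{Gal}(k\cap F(\chi)/\Q)}\mathrm{Gal}(F(\chi)/\Q)
\]
of Remark~\ref{rem:wellknown}, and writing $g=(g_1,g_2)$ with $g_1=g|_k$ and $g_2=g|_{F(\chi)}$, we get
\[
\#\mathrm{Gal}(E/\Q)\cdot m(\chi,k,S)\;=\;\sum_{\substack{(g_1,g_2)\in\mathrm{Gal}(E/\Q)\\ g_1\in S}}\chi(g_2).
\]
Here $\chi(g_2)$ means the character $\chi$ evaluated on its associated faithful character of $\mathrm{Gal}(F(\chi)/\Q)$, valid because $F(\chi)=\Q(\zeta_n)^{\mathrm{Ker}(\chi)}$.

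Next I would fix $g_1$ and analyse the inner sum. For each $g_1\in\mathrm{Gal}(k/\Q)$ lying in $S$, the fibered-product condition forces $g_2|_{k\cap F(\chi)}=g_1|_{k\cap F(\chi)}$. The set of admissible $g_2$ is therefore either empty (it is not, since both projections in the fibered product are surjective) or a coset $g_2^{(0)}\cdot H'$ of
\[
H':=\mathrm{Gal}(F(\chi)/k\cap F(\chi))\;\leq\;\mathrm{Gal}(F(\chi)/\Q).
\]
Hence the inner sum equals $\chi(g_2^{(0)})\sum_{h\in H'}\chi(h)$.

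Finally I would finish by orthogonality. The sum $\sum_{h\in H'}\chi(h)$ equals $\#H'$ if $\chi|_{H'}$ is trivial and $0$ otherwise. But $\chi$ is a faithful character of $\mathrm{Gal}(F(\chi)/\Q)$ by the very construction of $F(\chi)$, so $\chi|_{H'}$ is trivial precisely when $H'=\{1\}$, i.e.\ when $F(\chi)=k\cap F(\chi)$, i.e.\ when $F(\chi)\subseteq k$, i.e.\ when $\chi\in T_k$. Since by hypothesis $\chi\notin T_k$, the inner sum vanishes for every $g_1\in S$, and therefore $m(\chi,k,S)=0$. There is no real obstacle here; the only delicate point is to apply the fibered-product identification cleanly so that the two projections genuinely decouple inside the inner sum.
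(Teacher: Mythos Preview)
Your proof is correct and follows essentially the same approach as the paper's: both use the fibered-product description of $\mathrm{Gal}(E/\Q)$ from Remark~\ref{rem:wellknown}, fix the first coordinate, and observe that the second coordinate runs over a coset of a non-trivial subgroup on which the character sums to zero. The only cosmetic difference is that the paper works with conjugacy classes $C=c\times\{\lambda\}$ and identifies $\mathrm{Gal}(F(\chi)/\Q)$ with $\mathrm{Im}(\chi)$ so that your subgroup $H'$ becomes the group of $t$-th roots of unity for $t=\#H'>1$, whereas you invoke character orthogonality on $H'$ directly; the content is identical.
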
\begin{proof} 
 We claim that for each $c$ in $S$ we have  
$$\sum_{C: \ \pi(C)=c} \chi(C) \#C=0.$$
As argued above, if $\pi(C)=c$ then the  conjugacy class $C$ has always the shape  $c\times \{\lambda\}$, thus,
  $\#C=\#c$. Hence, the claim is equivalent to stating that for each $c$ in $S$ one has 
\beq{eq:claimleg}{\sum_{C: \ \pi(C)=c} \chi(C) =0.}
Fix an element $g$ of $\text{Gal}(k/\Q)$. We claim that the set of   roots of unity $\lambda$ such that$$(g,\lambda) \in \text{Gal}(E/\Q) \subseteq \text{Gal}(k/\Q) \times \text{Im}(\chi)$$
consists of a non-empty collection of cosets by a \emph{non-trivial} subgroup of $\text{Im}(\chi)$ as soon as $\chi$ is not in $T_k$. It is non-empty because the first coordinate map is surjective. To see this we use  Remark~\ref{rem:wellknown}
with  $F=\Q, L_1=k, L_2=F(\chi)$ to get  the   identification
$$\text{Gal}(L/\Q) \to \text{Gal}(k/\Q) \times_{\text{Gal}( F(\chi) \cap k/\Q)} \text{Gal}(F(\chi)/\Q).$$
Since $\chi\notin T_k$ and $\text{Gal}(F(\chi)/\Q)=\text{Im}(\chi)$, we deduce  that there is $t>1$ dividing $\text{deg}(\chi)=\#\text{Im}(\chi)$ such that $\text{Gal}( F(\chi)\cap k /\Q)=\text{Im}(\chi^{t})$, as well as 
$$\text{Gal}(E/\Q) \simeq
\{(g,\lambda) \in \text{Gal}(k/\Q) \times \text{Im}(\chi): \chi^{t}(g)=\lambda^{t}\}.
$$
Since $t$   divides $\#\text{Im}(\chi)$ and $t>1$, the group
 $\text{Im}(\chi)[t]=\{\lambda \in \mathbb C: \lambda^{t}=1\}$ has $t>1$ elements. Hence, for each   $g\in\text{Gal}(k/\Q)$ the set of $\lambda$ that appear as coordinate of $(g,\lambda)$ in $\text{Gal}(E/\Q)$ form a coset of the group of $t$-th roots of unity. It follows that the terms in~\eqref{eq:claimleg}
can be arranged in blocks of cosets under the $t$-th roots of unity. Such a coset is   a 
 regular polygon on the unit circle and hence has $0$ as its center of mass. This proves~\eqref{eq:claimleg} and thus concludes the proof.
\end{proof}

\begin{lemma}\label{lem:scarlatti3}Fix $ \c A, k$ and $S$ as above. If $\c A$ has natural density $1$ among the primes 
then $\c A$ contains   all but finitely many primes.
\end{lemma}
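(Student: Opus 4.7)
The plan is to invoke Chebotarev's density theorem on the structure defining $\c A$, and deduce that $S$ must be the full group $\mathrm{Gal}(k/\Q)$.

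Recall that by hypothesis every prime $p$ unramified in $k/\Q$ lies in $\c A$ if and only if $\mathrm{Frob}(p,k/\Q) \in S$. First I would note that only finitely many primes ramify in $k/\Q$, so if one sets
\[
\c A' := \{p \text{ prime unramified in } k/\Q : \mathrm{Frob}(p,k/\Q) \in S\},
\]
then $\c A$ and $\c A'$ differ by only finitely many primes. In particular, the natural density of $\c A'$ also equals $1$.

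Next I would apply Chebotarev's density theorem to the Galois extension $k/\Q$: the density of $\c A'$ is equal to $\#S/\#\mathrm{Gal}(k/\Q)$. Since we are assuming this density is $1$, we conclude $\#S=\#\mathrm{Gal}(k/\Q)$, i.e.\ $S=\mathrm{Gal}(k/\Q)$. Therefore $\c A$ contains every prime that is unramified in $k/\Q$, and hence $\c A$ omits at most the finite set of primes ramifying in $k/\Q$.

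There is essentially no obstacle; the only point worth being slightly careful about is distinguishing $\c A$ from $\c A'$, since the hypothesis pins down membership in $\c A$ only at unramified primes, but this is absorbed into the ``finitely many'' slack allowed by the conclusion.
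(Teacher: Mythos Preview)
Your proof is correct and follows essentially the same approach as the paper: apply Chebotarev's density theorem to conclude $S=\mathrm{Gal}(k/\Q)$, and then observe that the only possible exceptions are the finitely many primes ramifying in $k/\Q$. The paper's version is terser and does not explicitly introduce $\c A'$, but the argument is identical.
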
\begin{proof}By Chebotarev density theorem, we must have that the conjugacy classes of $S$ have total mass $1$ in $\text{Gal}(k/\Q)$. Since this is a finite probability space, this is the same thing as saying that $S$ consists of {all} equivalence classes of this group. Therefore the set of exceptional primes in this case is precisely the set of ramified primes in $k/\Q$, which is   finite.\end{proof}

We give  a quantitative version of Lemma~\ref{lem:scarlatti1} using bounds for  Landau--Siegel zeros
and a special case of recent work of Thorner--Zaman~\cite{thorn} 
on Chebotarev's density theorem for number fields that do not contain many quadratic subfields. 
Recall the standard  result~\cite[Lemma 3]{stark} that for a finite Galois extension $M/\Q$
the Dedekind zeta function $\zeta_M$  has at most one real zero  in the interval $[1-1/(4 \log |\textrm{Disc}(M)|), 1)$. If such  a zero exists, it
  is called the Landau--Siegel zero of $\zeta_M$.
\begin{lemma}[Thorner--Zaman]\label{lem:scarlatti4}
Fix  any positive constants  $A$ and $N$.
There exist   positive absolute  constants        $\gamma, \gamma_1$ such that
for any   finite Galois extension   $M/\Q$  with Galois group $G$,  
any conjugacy class $C\subset G$ 
and any $x \geq (|\mathrm{Disc}(M/\Q)| [M:\Q]^{[M:\Q]})^{\gamma_1}$
for which all quadratic extensions $M_0/\Q$ contained in $M$ satisfy $|\mathrm{Disc}(M_0/\Q)| \leq (\log x)^N$, 
 we have that the number of, unramified in $M$,  primes      whose Artin symbol is in $C$
and with     $|\mathrm N_{M/\Q}(p)|\leq x$ equals  
$$\frac{\#C}{\#G} \int_2^x\frac{\mathrm dt }{\log t } \left(1+O((\log x)^{-A})\right),$$ where the implied constant is independent of $x,C$ and $M$.
\end{lemma}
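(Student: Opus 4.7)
The plan is to deduce the statement directly from Thorner--Zaman's effective Chebotarev density theorem, whose main content isolates the sole obstruction — the possible Landau--Siegel zero — to a fully quantitative version of Chebotarev for arbitrary Galois extensions. The overall strategy has three clean steps.

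First, one uses the factorization $\zeta_M(s)=\prod_\rho L(s,\rho)^{\dim \rho}$ through Artin $L$-functions, so that by character orthogonality the counting function of unramified primes with Artin symbol in $C$ can be written as
$$\pi_C(x)=\frac{\#C}{\#G}\sum_\rho \overline{\rho(C)}\,\psi_\rho(x),$$
where $\psi_\rho(x)=\sum_{|\mathrm N_{M/\Q}p|\leq x}\rho(\mathrm{Frob}_p)\log \mathrm N_{M/\Q}p$ and $\rho$ ranges over the irreducible characters of $G$. A standard contour integration of $-L'/L(s,\rho)$, combined with Thorner--Zaman's log-free zero density estimate, produces the main term $(\#C/\#G)\int_2^x dt/\log t$ together with an error that is explicitly controlled by the distance of the exceptional real zero (if any) of $\zeta_M$ from $s=1$ and by $\log|\textrm{Disc}(M/\Q)|$ and $[M:\Q]$. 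The hypothesis $x\geq (|\textrm{Disc}(M/\Q)|[M:\Q]^{[M:\Q]})^{\gamma_1}$ is exactly what is needed to absorb these $M$-dependent factors into an error of order $\mathrm{Li}(x)(\log x)^{-A}$, on choosing $\gamma_1$ sufficiently large as a function of $A$.

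Second, by Stark's lemma any Landau--Siegel zero of $\zeta_M$ must arise from a real primitive Dirichlet character whose conductor divides $|\textrm{Disc}(M_0/\Q)|$ for some quadratic subfield $M_0\subset M$. Under the assumption that every such discriminant is at most $(\log x)^N$, Siegel's theorem supplies a bound of the shape $1-\beta\gg_{A,N}(\log x)^{-\varepsilon}$ for any fixed small $\varepsilon>0$ on the exceptional zero; feeding this back into the zero-density bound yields the claimed uniform error $O((\log x)^{-A})$, with implied constant depending only on $A$ and $N$.

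The main obstacle, and the crux of Thorner--Zaman's work, is the log-free zero density estimate that allows the error term to remain uniform in $M$: without it, the contribution of high-dimensional representations $\rho$ of $G$ would swamp the main term for large groups. Once this input is granted, the derivation of the lemma is a routine unravelling of the Perron-formula contour shift, with all remaining dependencies on $A$ and $N$ tracked through Siegel's theorem and the choice of $\gamma_1$.
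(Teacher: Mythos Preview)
Your overall outline is sound in spirit, but it differs from the paper's route and contains two inaccuracies worth flagging.

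The paper does not re-derive Thorner--Zaman's result from the Artin factorization and a log-free zero density estimate. It simply quotes \cite[Theorem 1.1]{thorn} as a black box: that theorem already gives
\[
\pi_C(x)=\frac{\#C}{\#G}\Bigl(\mathrm{Li}(x)+O(x^{\beta_1})\Bigr)
\Bigl(1+O\bigl(e^{-\gamma_3\log x/\log(D_M n_M^{n_M})}+e^{-(\gamma_3\log x)^{1/2}/n_M^{1/2}}\bigr)\Bigr)
\]
for $x\geq (D_M n_M^{n_M})^{\gamma_2}$, with $\beta_1$ the possible Siegel zero of $\zeta_M$. The two exponential errors are then beaten down to $O((\log x)^{-A})$ by elementary manipulations using only the hypothesis $x\geq (D_M n_M^{n_M})^{\gamma_1}$ with $\gamma_1=\max\{\gamma_2,4/(\gamma_3\log 2)\}$. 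In particular $\gamma_1$ is \emph{absolute}, not ``sufficiently large as a function of $A$'' as you write; the dependence on $A$ sits entirely in the implied constant. Your route via character orthogonality and contour shifting would work, but it amounts to reproving what Thorner--Zaman already packaged, and is considerably longer.

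For the Siegel-zero step, the result you invoke---that an exceptional real zero of $\zeta_M$ must already be a zero of $\zeta_{M_0}$ for some quadratic subfield $M_0\subset M$---is Heilbronn's theorem, not Stark's lemma (Stark's result is the uniqueness of the real zero in $[1-1/(4\log D_M),1)$). Once Heilbronn is cited, the paper applies Siegel's bound $\beta_1\leq 1-c(\epsilon)|\mathrm{Disc}(M_0/\Q)|^{-\epsilon}$ with $\epsilon=1/(2N)$, exactly as you describe, and the hypothesis $|\mathrm{Disc}(M_0/\Q)|\leq(\log x)^N$ finishes the job. So your second step matches the paper up to the misattribution.
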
\begin{proof} Let  $n_M:=[M:\Q]$ and $D_M:=|\mathrm{Disc}(M/\Q)|$.
By~\cite[Theorem 1.1]{thorn}   there are absolute constants $\gamma_2,\gamma_3>0$ such that 
for  $x \geq (|D_M| n_M^{n_M})^{\gamma_2}$ 
  the cardinality equals  $$ \frac{\#C}{\#G} \left( \int_2^x\frac{\mathrm dt }{\log t } +O(x^{\beta_1} )\right)\left(1+O\l(
\exp\l[-\frac{\gamma_3\log x}{\log(|D_M|n_M^{n_M} )}\r]
+ \exp\l[-\frac{(\gamma_3\log x)^{1/2} }{n_M^{1/2} } \r]
\r)\right),$$ where we used   the trivial inequality $\mathrm{Li}(x)\ll x $ and   $\beta_1\in(0,1)$ is a possible Landau--Siegel zero of the Dedekind zeta function $
\zeta_M$. If  $\gamma_1=\max\{\gamma_2,4/(\gamma_3\log 2)\}$
and  $x \geq (|D_M| n_M^{n_M})^{\gamma_1}$  then we see that $2^{n_M \gamma_1} \leq x$, hence
$n_M \leq \frac{\gamma_3}{4}  \log x $. This makes the  second error term be  
 $$ \ll  \exp\l[-\frac{(\gamma_3\log x)^{1/2} }{2} \r] \ll_A(\log x)^{-A}.$$ Similarly, since $\gamma_1 \geq 2/\gamma_3$
we get  $ \log ((|D_M| n_M^{n_M})) \leq \frac{1}{2} \log x $ 
 from   $x \geq (|D_M| n_M^{n_M})^{\gamma_1}$.  In particular, 
$$ \exp\l[-\frac{\gamma_3\log x}{\log(|D_M|n_M^{n_M} )}\r] \ll_A(\log x)^{-A}. $$
We next deal with the error term $O(x^{\beta_1})$. By Heilbronn's theorem~\cite{heilbronn} there is a quadratic 
extension $M_0/\Q$ contained in $M$ and whose Dedekind zeta function 
vanishes  at $\beta_0$.  By Siegel's well-known work~\cite[Theorem 12.10]{koukou} we know that for every $\epsilon>0$ there exists 
an ineffective constant $c(\epsilon)>0$ such that $\beta_1\leq 1- c(\epsilon) |\textrm{Disc}(M_0/\Q)|^{-\epsilon}$, therefore, 
$x^{\beta_1} \leq x \exp (-c(\epsilon) (\log x ) |\textrm{Disc}(M_0/\Q)|^{-\epsilon} )$. Recalling the assumption  of the present lemma 
$|\mathrm{Disc}(M_0/\Q)| \leq (\log x)^N$  and taking $\epsilon =1/(2N)$ shows that 
$$x^{\beta_1} \leq x \exp \l(-c(1/(2N)) \sqrt{\log x }\r)\ll_A \textrm{Li(x)} (\log x)^{-A}.$$
Taking $\gamma=\gamma_3$ concludes the proof.  \end{proof}

\begin{lemma}\label{lem:scarlatti5} Fix any $A,N>3$ and let $k, S,\c A$  be as above. For any 
square-free integer  $\beta$, any Dirichlet character $\psi$ of conductor $q$ coprime to $\beta$
and any   $ q\leq (\log x)^{9/10}, |\beta|\leq (\log x)^{N/3}$ we have 
  $$ \frac{1}{x}\sum_{\substack{p\mathrm{ \ unramified \ in \  } k\\p\leq x, p\in \c A }}
\l(\frac{p}{\beta}\r)\psi(p)\log p=m(\chi,k,S) +O_A\left(\frac{1}{(\log x)^A}\right),$$
where the implied constants depend at most on $A,N$ and $k$.
\end{lemma}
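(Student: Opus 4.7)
The plan is to reinterpret the sum as a Chebotarev sum over the compositum $E:=k\cdot F(\chi)$ and then apply the uniform Chebotarev estimate of Lemma~\ref{lem:scarlatti4}. Here $\chi$ denotes the Dirichlet character $n\mapsto \psi(n)\bigl(\frac{n}{\beta}\bigr)$, which is well-defined modulo a divisor of $4q|\beta|$, and $m(\chi,k,S)$ in the statement is this particular $\chi$.

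First I would note that primes $p$ dividing $q\beta$ contribute $\chi(p)=0$ and can be ignored. Every remaining prime that is unramified in $k$ is automatically unramified in all of $E$: for such primes $\chi(p)=\chi(\mathrm{Frob}(p,E/\Q))$, and, as in the proof of Lemma~\ref{lem:scarlatti1}, the condition $p\in\c A$ translates to $\pi(\mathrm{Frob}(p,E/\Q))\in S$, where $\pi:\mathrm{Gal}(E/\Q)\to \mathrm{Gal}(k/\Q)$ is the restriction. Grouping primes by conjugacy classes $C$ of $\mathrm{Gal}(E/\Q)$ with $\pi(C)\in S$, the sum in question becomes
$$\sum_{C:\,\pi(C)\in S}\chi(C)\,\psi_C(x),\qquad \psi_C(x):=\sum_{\substack{p\leq x\\ \mathrm{Frob}(p,E/\Q)=C}}\log p,$$
up to an error of size $O(\log|\mathrm{Disc}(E/\Q)|+\log(q|\beta|))$ coming from the ramified primes.

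Next I would verify the hypotheses of Lemma~\ref{lem:scarlatti4} for the extension $E/\Q$. The order of $\chi$ divides $2\,\mathrm{ord}(\psi)\leq 2\phi(q)\leq 2q$, so $[F(\chi):\Q]\leq 2q$ and $[E:\Q]\leq 2[k:\Q]q\leq 2[k:\Q](\log x)^{9/10}$. The conductor-discriminant formula gives $|\mathrm{Disc}(F(\chi)/\Q)|\leq \mathrm{cond}(\chi)^{[F(\chi):\Q]}\leq (4q|\beta|)^{2q}$, and Proposition~\ref{prop: upper bound on disc} then yields $\log|\mathrm{Disc}(E/\Q)|\ll_k q\log(q|\beta|)\ll_k (\log x)^{9/10+N/3}\log\log x$; together with $\log([E:\Q]^{[E:\Q]})\ll_k (\log x)^{9/10}\log\log x$, this makes the size hypothesis of Lemma~\ref{lem:scarlatti4} hold for $x$ sufficiently large. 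The critical check is the condition on quadratic subfields: any $M_0\subseteq E$ of degree $2$ has the form $\Q(\sqrt{d_1 d_2})$ with $\Q(\sqrt{d_1})\subseteq k$ and $\Q(\sqrt{d_2})\subseteq F(\chi)$, so $|d_1|$ is bounded in terms of $k$ alone, while $|d_2|$ divides $\mathrm{cond}(\chi)\leq 4q|\beta|$. Consequently $|\mathrm{Disc}(M_0/\Q)|\ll_k q|\beta|\leq 4(\log x)^{9/10+N/3}\leq (\log x)^N$ for $x$ large, since $9/10+N/3<N$ whenever $N>3$.

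Applying Lemma~\ref{lem:scarlatti4} with exponent $A+1$, to absorb the factor $\#\mathrm{Gal}(E/\Q)\ll(\log x)^{9/10}$ incurred by summing over conjugacy classes, and using partial summation to convert the prime-counting asymptotic into an asymptotic for $\psi_C(x)$, I obtain
$$\sum_{\substack{p\leq x,\,p\in\c A\\ p\text{ unramified in }k}}\chi(p)\log p=\frac{x}{\#\mathrm{Gal}(E/\Q)}\sum_{C:\,\pi(C)\in S}\chi(C)\#C+O_{A,N,k}\!\left(\frac{x}{(\log x)^A}\right)=m(\chi,k,S)\,x+O_{A,N,k}\!\left(\frac{x}{(\log x)^A}\right),$$
and dividing through by $x$ yields the asserted identity. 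The main obstacle is precisely the uniformity bookkeeping of the preceding paragraph: one must estimate $|\mathrm{Disc}(E/\Q)|$, $[E:\Q]$ and especially the discriminants of all quadratic subfields of $E$ uniformly as $\psi$ and $\beta$ vary throughout the admissible ranges, so that Thorner--Zaman can be invoked for every allowed character $\chi$.
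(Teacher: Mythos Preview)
Your overall strategy matches the paper's: rewrite the sum as a Chebotarev sum over the compositum $E=k\cdot F(\chi)$, then invoke Lemma~\ref{lem:scarlatti4} after checking the discriminant and quadratic-subfield hypotheses.

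There is, however, a genuine gap in your verification of the quadratic-subfield hypothesis. Your assertion that every quadratic subfield $M_0\subseteq E$ has the form $\Q(\sqrt{d_1d_2})$ with $\Q(\sqrt{d_1})\subseteq k$ and $\Q(\sqrt{d_2})\subseteq F(\chi)$ is not true in general. A counterexample arises when both $k$ and $F(\chi)$ are $\Z/4$-extensions of $\Q$ sharing a common quadratic subfield: then $\mathrm{Gal}(E/\Q)\cong\Z/4\times_{\Z/2}\Z/4\cong\Z/4\times\Z/2$ has three nontrivial quadratic characters, whereas the products of quadratic characters pulled back from $\mathrm{Gal}(k/\Q)$ and $\mathrm{Im}(\chi)$ yield only one (since on the fibered product the two pulled-back quadratic characters coincide). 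Concretely, the character $(a,b)\mapsto i^{a-b}$ on $\{(a,b)\in(\Z/4)^2:a\equiv b\bmod 2\}$ is quadratic but is not such a product, so the corresponding $M_0$ does not decompose as you claim.

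The fix is exactly what the paper does: drop the structural claim entirely and observe that any quadratic $M_0=\Q(\sqrt d)\subseteq E$ ramifies only at primes ramifying in $E$, hence only at primes dividing $\mathrm{Disc}(k/\Q)\cdot q\cdot\beta$ and possibly $2$. Since $d$ is square-free this gives $|d|\leq 2\,\mathrm{rad}(|\mathrm{Disc}(k/\Q)|q|\beta|)\ll_k q|\beta|$, so $|\mathrm{Disc}(M_0/\Q)|\ll_k q|\beta|\leq(\log x)^{9/10+N/3}<(\log x)^N$ for $N>3$. With this correction, the remainder of your argument (bounding $[E:\Q]$ and $|\mathrm{Disc}(E/\Q)|$ via Proposition~\ref{prop: upper bound on disc}, applying Lemma~\ref{lem:scarlatti4}, and absorbing the contribution of the $O_k(\log(q|\beta|))$ ramified primes) goes through and coincides with the paper's proof.
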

\begin{proof} The sum can be written as 
$$ \sum_{\substack{p\leq x, p\nmid \beta q  \\ p\in \c A }} \l(\frac{p}{\beta}\r)\psi(p)\log p+O(\#\{p\mid \beta q\} \log x).$$ 
A prime $p\nmid \beta q\textrm{Disc}(k/\Q)$  
  is  unramified in the compositum $E=F((\frac{\cdot}{\beta}))  F(\psi)  k$, thus, the sum 
  equals $$ \sum_{\substack{p\in \c A\cap [2,x]   \\  p \textrm{ unramified in }E/\Q}} \l(\frac{p}{\beta}\r) \psi(p)\log p+O(\#\{p\mid \beta q
\textrm{Disc}(k/\Q)  
\}  \log x).$$ 
As in Lemma~\ref{lem:scarlatti1} with $\chi(\cdot)=(\frac{\cdot}{\beta}) \psi(\cdot)$
we can write this as  
$$\sum_{C: \pi(C)\in S} \l(\frac{C}{\beta}\r)\psi(C) \sum_{\substack{p\leq x, \text{Frob}(p,E/\Q) \in C\\  p \textrm{ unramified in }E/\Q }} \log p
+O(\#\{p\mid \beta q \textrm{Disc}(k/\Q) \} \log x),$$ where  the first sum is over   conjugacy classes   $C$      of $\mathrm{Gal}(E/\Q)$. 
The error term is trivially bounded by $O_k((\log x)^{1+N})$ by our assumption on the size of $q$ and $|\beta|$.
The proof is now completed by invoking Lemma~\ref{lem:scarlatti4} 
for   $M=E$  together with partial summation to deal with the factor $\log p$.
This gives an asymptotic for $x$ under certain assumption on the growth of $x$ 
that we verify in the remaining of the proof.

Let $M_0/\Q$ be a quadratic subextension of $E$ and note   that $E$ ramifies at the prime divisors of 
$$\text{Disc}(k/\Q)   \text{Disc}(F(\psi)/\Q)   \text{Disc}(F((\cdot/\beta))/\Q),  
$$ hence $\text{Disc}(M_0/\Q)$ divides 
$8  \text{Disc}(k/\Q)  \text{rad}(\text{Disc}(F(\psi)/\Q))  \text{Disc}(F((\cdot/\beta))/\Q),$
where $\text{rad}$ denotes the radical. The extension $F(\psi)/\Q$ is a subextension of $\Q(\zeta_{q})/\Q$, hence, it ramifies only at 
the prime divisors of $q$. Thus, $\text{rad}(\text{Disc}(F(\psi)/\Q))$ divides $q$ and noting that $\text{Disc}(F((\cdot/\beta))/\Q)\ll |\beta|$
we conclude that $|\text{Disc}(M_0/\Q)| \leq c_k q |\beta|$, where $c_k$ depends only only the field $k$. For $x$ large enough
compared to $c_k$ and $N$ we can  see that  $|\text{Disc}(M_0/\Q)| \leq (\log x)^N$ by using 
the assumption that both   $ q$ and $ |\beta|$ are bounded by $ (\log x)^{N/3}$.

It remains to verify the condition $x \geq (|\mathrm{Disc}(E/\Q)| [E:\Q]^{[E:\Q]})^{\gamma_1}$ of  Lemma~\ref{lem:scarlatti4}.
Firstly, we have   $[E:\Q] \leq  [k:\Q] [F((\cdot/\beta)):\Q][F(\psi):\Q]
\leq [k:\Q] 2 q.$ Then, using Proposition~\ref{prop: upper bound on disc} with 
$L_1=k,L_2=F((\cdot/\beta)) F(\psi)$ we obtain $$|\textrm{Disc}(E/\Q)| \leq  |\textrm{Disc}(k/\Q)|^{[L_2:\Q]}
|\textrm{Disc}(L_2/\Q)|^{[k:\Q]} \leq  |\textrm{Disc}(k/\Q)|^{2q}
(|\beta|q^q)^{[k:\Q]}.$$ Since  $N>3$ and $q\leq (\log x)^{9/10},|\beta|\leq (\log x)^{N/3}$ it is easy to verify that 
$$|\textrm{Disc}(k/\Q)|^{2q \gamma_1} \leq x^{1/3},
(|\beta|q^q)^{\gamma_1 [k:\Q]}\leq x^{1/3},( [k:\Q] 2 q )^{\gamma_1 [k:\Q] 2 q } \leq x^{1/3},$$
 hence $ (|\mathrm{Disc}(E/\Q)| [E:\Q]^{[E:\Q]})^{\gamma_1}\leq |\textrm{Disc}(k/\Q)|^{2q \gamma_1}(|\beta|q^q)^{\gamma_1 [k:\Q]}
( [k:\Q] 2 q )^{\gamma_1 [k:\Q] 2 q } \leq x $. \end{proof}

 \subsection{Not perfectly unstable fields}

\begin{theorem} \label{Thm: infinitely many with delta=1}
There are infinitely many number fields $L$ with $\delta_L=1$ and $[L:\Q]=6$. 
\begin{proof}Define 
 $G:=\mathbb{F}_4 \rtimes \mathbb{Z}/3\mathbb{Z} \simeq_{\text{gr.}} \mathbb{F}_4 \rtimes \mathbb{F}_4^{\times} \simeq_{\text{gr.}} A_4$ ,
where the action is given by multiplication by the third  root of unity on $\mathbb{F}_4$. 
It  acts on the vertices of the $3$-dimensional cube $\{\pm 1\}^3$ by isometries. This induces an action on the set $X$ consisting of the $6$ faces of the cube, that we describe as follows: Write  $X:=\{x_1, \ldots, x_6\}$ and  consider the group of permutations that preserve the decomposition
$$X:=\{x_1,x_4\} \cup \{x_2,x_5\} \cup \{x_3,x_6\}.$$ The group contains   the element  $\rho:=(x_1 \mapsto x_2 \mapsto x_3)(x_4 \mapsto x_5 \mapsto x_6)$ 
of order $3$ and  two commuting involutions
$ \sigma_1:=(x_1 \mapsto x_4)(x_2 \mapsto x_5), \sigma_2:=(x_2 \mapsto x_5)(x_3 \mapsto x_6).$
These elements generate  $G$ and give us an explicit realization of the above action. 

Let $G_{\Q}$ denote the absolute Galois group.
To realize the previous action  as a $G_{\Q}$-set in infinitely many different ways, it 
suffices to start with a cyclic cubic extension $E/\Q$, say  $E:=\Q(\text{cos}(\frac{2\pi}{7}))$, whose class number is $1$.
Now  take any element $\alpha\in E^{*}$ with  $\alpha \notin E^{*2}$
and whose norm down to $\Q$ is a square. An explicit choice of $\alpha$ is as follows:
pick any prime $p$ congruent to $1$ modulo $7$ and let  $\pi_1$ be a generator of one of the three prime ideals above $p$. 
Then   $E(\sqrt{\alpha})/\Q$ gives the desired sextic extension, where
 $\alpha:=\pi_1\sigma(\pi_1)$  and  $\sigma$ is a generator of $\text{Gal}(E/\Q)$. As $p$ varies we get infinitely many different extensions.   
\end{proof}
\end{theorem}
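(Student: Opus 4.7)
The plan is to produce, for each prime $p$ in an infinite arithmetic progression, an explicit non-Galois sextic extension $L=L_p/\Q$ whose Galois closure has group $A_4$, and then verify $\delta_{L_p}=1$ by a direct cycle-type computation for the (unique up to isomorphism) transitive action of $A_4$ on $6$ letters.

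First I would fix $E:=\Q(\cos(2\pi/7))$, which is a cyclic cubic field of class number $1$, and let $\sigma$ generate $\mathrm{Gal}(E/\Q)$. For each rational prime $p\equiv 1\pmod 7$ the prime $p$ splits completely in $E$: $p\mathcal O_E=\mathfrak p_1\mathfrak p_2\mathfrak p_3$ with $\sigma(\mathfrak p_i)=\mathfrak p_{i+1}$. Using class number $1$ I write $\mathfrak p_1=(\pi_1)$ and set $\alpha:=\pi_1\sigma(\pi_1)\in E^{\times}$ and $L_p:=E(\sqrt\alpha)$. The ideal $(\alpha)=\mathfrak p_1\mathfrak p_2$ has odd valuation at $\mathfrak p_1$, so $\alpha\notin E^{\times 2}$ and $[L_p:\Q]=6$; simultaneously $N_{E/\Q}(\alpha)=p^2\in\Q^{\times 2}$, the relation that will cut the a priori Galois group $\Z/2\wr\Z/3$ of order $24$ down to $A_4$.

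Next I would identify $G:=\mathrm{Gal}(N(L_p)/\Q)$. The closure is generated over $E$ by $\sqrt{\sigma^i\alpha}$ for $i=0,1,2$, but $\alpha\cdot\sigma\alpha\cdot\sigma^2\alpha=p^2$ kills one, so $N(L_p)=E(\sqrt\alpha,\sqrt{\sigma\alpha})$. A short valuation argument at $\mathfrak p_1$ applied to a hypothetical relation $\sigma\alpha=(a+b\sqrt\alpha)^2$ with $a,b\in E$ rules out both possibilities $b=0$ (contradicting $v_{\mathfrak p_2}(\sigma\alpha)=1$) and $a=0$ (forcing the impossible $0=v_{\mathfrak p_1}(\sigma\alpha)=1+2v_{\mathfrak p_1}(b)$), giving $[N(L_p):E]=4$, $[N(L_p):\Q]=12$, and $G\cong(\Z/2)^2\rtimes\Z/3\cong A_4$. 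Because $L_p$ is a sextic field the action on $X_{L_p}$ is transitive, and since all order-$2$ subgroups of $A_4$ are conjugate, there is up to isomorphism only one transitive $A_4$-action on a $6$-set; this is the face-of-the-cube action from the paragraph above the statement.

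The last and main step is the computation of $\delta_{L_p}=\delta(A_4,X_{L_p})$, carried out by running over the three conjugacy classes of $A_4$: the identity has cycle type $1^6$; each of the three involutions (a $\pi$-rotation through a pair of opposite face centres of the cube) has cycle type $(1,1,2,2)$; each of the eight elements of order $3$ (a $2\pi/3$-rotation about a body diagonal) has cycle type $(3,3)$. Every element has at least one odd-length orbit, hence $\delta_{L_p}=1$. Finally, distinct primes $p$ yield pairwise non-isomorphic $L_p$: the ramification locus of $L_p/\Q$ is contained in $\{2,7,p\}$, while $p$ itself ramifies in $L_p$ since $v_{\mathfrak p_1}(\alpha)=1$. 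The main obstacle is the $[N(L_p):\Q]=12$ claim and the attached $A_4$-identification, which rests entirely on the valuation bookkeeping at $\mathfrak p_1$; once it is in place the cycle-type inspection giving $\delta_{L_p}=1$ is routine.
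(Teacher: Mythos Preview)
Your proposal is correct and follows essentially the same construction as the paper: the cyclic cubic field $E=\Q(\cos(2\pi/7))$, the element $\alpha=\pi_1\sigma(\pi_1)$ for $p\equiv 1\pmod 7$, and the sextic $L_p=E(\sqrt{\alpha})$. You actually supply more detail than the paper does---the paper presents the face-of-the-cube action via explicit generators and the construction of $\alpha$, but leaves the verification that $\mathrm{Gal}(N(L_p)/\Q)\cong A_4$, the cycle-type check giving $\delta_{L_p}=1$, and the pairwise distinctness of the $L_p$ implicit; your valuation argument at $\mathfrak p_1,\mathfrak p_2$ and the conjugacy-class enumeration make these steps explicit.
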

To exemplify concretely the construction at the end of the proof above, the extension $\Q(\text{cos}(\frac{2\pi}{7}))$ can be given by 
an element $\alpha$ with minimal polynomial
 $\alpha^3+\alpha^2-2\alpha-1,$
hence, $\alpha$ is a unit in the ring of integer with norm equal to $1$. It is easy to see that  the polynomial
$x^6+x^4-2x^2-1$ gives the desired Galois set. 

\section{Proofs of Theorems~\ref{thm:therm1},~\ref{thm:perfectly0} and~\ref{thm:perfectly}}
\subsection{Proof of Theorem~\ref{thm:therm1}} The proof is an application of 
 Theorem~\ref{thm:analytic} with  $  \c P$ being the complement of $S_0(L/\Q)$  which is  defined in~\eqref{def:nationwide}.
By Proposition~\ref{prop:tescoexpress} the primes   $p\in S_0(L/\Q)$ automatically give a local rational point at the places above $p$.
To verify that $\c P$ satisfies the first two properties of  Definition~\ref{def:indepe} 
 we use Lemma~\ref{lem:scarlatti5} with $\c A=\c P$, $k=N(L)$, $N=600$ and $S$ being the 
set of permutations in $\textrm{Gal}(N(L)/\Q)$  that in their cycle decomposition have at least one odd cycle. Let $\c L(t):=\exp(t^{10/9})$ so that 
  if $x\geq \max\{\c L(q_\psi),\exp(\beta^{1/200})\}$  then $q\leq (\log x)^{9/10},|\beta| \leq (\log x)^{200}$, therefore, the asymptotic 
in  Lemma~\ref{lem:scarlatti5} implies the required second property in Definition~\ref{def:indepe}. 
The constant $\varpi$ of Theorem~\ref{thm:analytic}  equals 
$m(1,k,S)$, which, by Chebotarev's theorem, equals $\delta_{L/\Q}$ as    defined in~\eqref{Maramene, haggio visto 'na cosa}.
Since the identity element is always in $S$ we see that $\delta_{L/\Q}\neq 0$, hence, the third condition of  Definition~\ref{def:indepe}  is also met.
The fourth and fifth property of Definition~\ref{def:indepe}
are verified by using Lemma~\ref{lem:scarlatti3} and Lemma~\ref{lem:scarlatti2} respectively. 

Finally,  we need to determine  the quantity $z_B$ defined before Theorem~\ref{thm:analytic}.
If $\delta_{L/\Q} = 1$  then $z_B=\log \log B$. Otherwise,  we have 
$$(\log \c L(\mathrm e^{3z} ))^{\frac{8}{1-\delta_{L/\Q}}} =
\exp\Big( \frac{80 z}{3( 1-\delta_{L/\Q} )}\Big), $$ which is at most $\log B$ equivalently when $z\leq \gamma' \log \log B$ for some constant 
$ \gamma'= \gamma'(L)$. Therefore, in both cases we take  $z=\gamma_L\log \log B $  for some $\gamma_L$.  
Replacing $\gamma_L$ by $\min \{\gamma_L,3/10\}$ we see that 
$$\c L(\mathrm e^{3z_B}) =\exp(\mathrm e^{10z_B/3} )
=  \exp( (\log B)^{10\gamma_L/3} ) \leq B ,$$ hence, the assumption  $\c L(\mathrm e^{3z_B})  \leq B$ of Theorem~\ref{thm:analytic} is met. 
Finally, the error term supplied exhibits the saving $ (\log \min\{\log \log B, z_B\} )^{-A} \ll_L (\log  \log \log B  )^{-A} $.
 \qed

\subsection{Proof         of Theorem~\ref{thm:perfectly0}}
 The first two parts of 
Theorem~\ref{Thm: charact genus 0 stability}
show that $\#\c A_L=1$ equivalently when every $C_{s,t}$ with a point in $L$ also has a point in $\Q$.
If $r_L=1$ then $\delta_{L/\Q}=1$ by Theorem~\ref{thm:therm1}  and~\ref{thm:analytic} with $\c P$ being the set of all primes.
Hence, the complement of $\c A_L$ is finite and by
Remark~\ref{rem:Leonardo_Leo_is_a_god} it must contain at most one prime. The proof of the first part   
 of Theorem~\ref{thm:perfectly0} 
concludes by using the first and third parts of 
Theorem~\ref{Thm: charact genus 0 stability}.  In light of the first part    of Theorem~\ref{thm:perfectly0},
the second part is the same as   $ r_L<\infty  $ being equivalent to $ \#\c A_L<\infty  $. This is the special 
case $F=\Q$ of Proposition~\ref{Prop: S0 finite iff delta=1}.  The third claim of   of Theorem~\ref{thm:perfectly0}
is deduced from the first two claims.

\subsection{Proof of the first part of Theorem~\ref{thm:perfectly}} 
This is proved in the second part of Corollary~\ref{cor:lastoneok?}.
\subsection{Proof of the second part of Theorem~\ref{thm:perfectly}} 
 It was shown in the proof of 
Theorem \ref{Thm: infinitely many with delta=1} that for any  cyclic cubic number field $E/\Q$ and  
any   $\alpha\in E^{\times}$ that is not in $E^{\times 2}$   whose norm down to $\Q$ is a square, 
 the degree $6$ number field $L_{\alpha}:=E(\sqrt{\alpha})$ always satisfies $\delta_{L_{\alpha}/\Q}=1$. We will find the desired number fields in this pool. 

We begin fixing $E:=\Q(\zeta_9+\zeta_9^{-1})$. This cyclic cubic extension of $\Q$ has class number $1$. We denote by $\sigma$ a generator of $\text{Gal}(E/\Q)$. For each $\beta\in E$ the element $\alpha:=\beta  \sigma(\beta)$ satisfies 
$$N_{E/\Q}(\alpha)=N_{E/\Q}(\beta) N_{E/\Q}(\sigma(\beta))=N_{E/\Q}(\beta)^2.
$$Hence, as long as a so constructed $\alpha$ is in $E^{\times}\setminus E^{\times 2}$
we infer  that $\delta_{L_{\alpha}/\Q}=1$. We shall now focus on making sure that $\#S_0(L_{\alpha}/\Q)  \geq 2$, which, in view of Theorem \ref{Thm: charact genus 0 stability}, will enforce $1<r_{L_{\alpha}}<\infty$.

Let us denote by $H$ the ray class field of $E$ of modulus containing all the infinite places and $8$. We will henceforth 
work   with primes of $\Q$ that split completely in $H/\Q$. By construction, for such a prime $p$, each prime above $p$ admits a generator $\pi$ such that
 $\iota(\pi)>0$
for each of real embedding $\iota: E \to \R$ and 
 $\pi \equiv 1 \pmod{8 \mathcal{O}_E}.$
For each prime $p$ of $\Q$ that splits completely in $H/\Q$, we make a choice of a prime above and a choice of such a generator
and denote by $\pi(p)$ the resulting element of $E^{\times}$. 

The final construction will be provided by an element of the form
$$\beta:=\sigma(\pi(p_1)) \sigma(\pi(p_2))  \sigma(\pi(p_3)),$$ 
where we will make sure that the decomposition groups at $p_1$ and $p_2$   have all orbits of length $2$ on the Galois set corresponding to $L_{\alpha}/\Q$. We give below the corresponding quadratic  symbol conditions, which will also clarify  the need of using $3$ primes: a somewhat 
more involved argument using the large sieve for number fields 
would allow to use two primes only.

We      claim that  there are infinitely many triples $(p_1,p_2,p_3)$ of primes that split in $H/\Q$  and  
\beq{eq:claim}{
\left(\frac{\sigma(\pi(p_1))\sigma^2(\pi(p_1))\sigma(\pi(p_2))\sigma^2(\pi(p_2))\pi(p_3)\sigma(\pi(p_3))}{\pi(p_j)} \right)=-1  
\ \textrm{for \ } j=1,2.}To see why we fix any     $p_1,p_2$ that split completely in $H/\Q$   and   prove that there are infinitely many admissible 
$p_3$ that split completely in $H/\Q$. To ease the  notation we   denote $\pi_i:=\pi(p_i)$. Let us show that
$$\left(\frac{\sigma(\pi_1)\sigma^2(\pi_1)\sigma(\pi_2)\sigma^2(\pi_2)\pi_3\sigma(\pi_3)}{\pi_1} \right) =\left(\frac{\pi_1}{\sigma(\pi_1)\sigma^2(\pi_1)\sigma(\pi_2)\sigma^2(\pi_2)\pi_3\sigma(\pi_3)} \right).
$$
To see this we apply Hilbert reciprocity to $(\sigma(\pi_1)\sigma^2(\pi_1)\sigma(\pi_2)\sigma^2(\pi_2)\pi_3\sigma(\pi_3), \pi_1),$
hence, by construction, as the $\pi_i$ are positive at all real embeddings, this symbol vanishes locally at all real places. Likewise, locally at the place above $2$ (which is inert in $E/\Q$)   the symbol is $1$, as we have ensured  that both entries are    $1$ modulo $8$. The only odd places  to be checked are 
those  above $\{p_1,p_2,p_3\}$. They yield precisely the desired result in view of Proposition \ref{HS tame case+a little bit of wild}. This shows that the resulting equality of the Legendre symbol with its swapped version holds, as desired. The same argument also shows that
$$\left(\frac{\sigma(\pi_1)\sigma^2(\pi_1)\sigma(\pi_2)\sigma^2(\pi_2)\pi_3\sigma(\pi_3)}{\pi_2} \right)=
\left(\frac{\pi_1}{\sigma(\pi_2)\sigma^2(\pi_1)\sigma(\pi_2)\sigma^2(\pi_2)\pi_3\sigma(\pi_3)} \right).$$
Thus, we can rewrite~\eqref{eq:claim} as $$-\left(\frac{\pi_1}{\sigma(\pi_1)\sigma^2(\pi_1)\sigma(\pi_2)\sigma^2(\pi_2)} \right)=
\left(\frac{\pi_j}{\pi_3\sigma(\pi_3)} \right) \ \textrm{for \ } j=1,2.$$Crucially the left-hand side is fixed as it involves only the primes above $\{p_1,p_2\}$

For the  right-hand side we note that 
$$\left(\frac{\pi_j}{\pi_3\sigma(\pi_3)} \right)=\left(\frac{\pi_j}{\pi_3} \right)  \left(\frac{\sigma^2(\pi_j)}{\pi_3} \right)=
\left(\frac{\pi_j \sigma(\pi_1)}{\pi_3} \right).
$$ The field
 $H(\sqrt{\pi_1   \sigma(\pi_1)}, \sqrt{\sigma(\pi_1) \sigma^2(\pi_1)},\sqrt{\pi_2  \sigma(\pi_2)}, \sqrt{\sigma(\pi_2) \sigma^2(\pi_2)})/E
 $
is the compositum of the   linearly disjoint extensions
 $H/E,  N(L_{\pi_1  \sigma(\pi_1)})/E 
$  and $N(L_{\pi_2  \sigma(\pi_2)})/E$.
By   Chebotarev's density theorem we can find infinitely many   $p_3$ splitting in $H/\Q$ and with any of the $6$ possible $\sigma$-orbits in 
$$\text{Gal}(N(L_{\pi_1  \sigma(\pi_1)})/E) \times \text{Gal}(N(L_{\pi_2  \sigma(\pi_2)})/E).
$$Choosing $\pi$ from the set  $\{\pi_3, \sigma(\pi_3), \sigma^2(\pi_3)\}$ we see that   the two symbols 
$$\left(\left(\frac{\pi_1   \sigma(\pi_1)}{\pi} \right),\left(\frac{\pi_2   \sigma(\pi_2)}{\pi} \right)\right)
$$
can take each of the $4$ possible values. Therefore we  can find $\pi$ such that
$$-\left(\frac{\pi_1}{\sigma(\pi_1)\sigma^2(\pi_1)\sigma(\pi_2)\sigma^2(\pi_2)} \right)=\left(\frac{\pi_j}{\pi\sigma(\pi)} \right)
\ \textrm{for \ } j=1,2.$$
For notational convenience we rename the   choice $\pi_3$ to be equal to $\pi$ itself.

We will now apply the claim above with the choice of 
$$\alpha:=\sigma(\pi_1)\sigma^2(\pi_1)\sigma(\pi_2)\sigma^2(\pi_2)\pi_3\sigma(\pi_3),
$$
given~\eqref{eq:claim}. 
By construction, the decomposition groups of $p_1$ and $p_2$ in $\text{Gal}(N(L_{\alpha})/\Q)$ consist of the
 subgroup $\text{Gal}(N(L_{\alpha})/E)$. Indeed as both the ramification index and the residue field degree are at least $4$, we see that the decomposition group is of size at least $4$. On the other hand as $p_1,p_2$ split completely in $E/\Q$,   the decomposition groups land 
in $\text{Gal}(N(L_{\alpha})/E)$, which has size $4$.  

Next, the subgroup $\text{Gal}(N(L_{\alpha})/E)$ is isomorphic to the Klein group with $4$ elements and acts on the 
corresponding Galois set $X_{L_{\alpha}}$ of $6$ elements  so  that its orbits are $3$ each of length $2$. 
Representing $X_{L_{\alpha}}$ as the $6$ faces of a cube, these $3$ orbits are precisely the $3$ pairs of opposite faces. 

Hence, at both  $p_1$ and $p_2$
the decomposition groups have only orbits of even length, namely, equal to $2$. In other words, 
 for each such $\alpha$ we have proved 
$\{p_1,p_2\} \subseteq S_0(L_{\alpha}/\Q)$. 
Since we know that 
 $\delta_{L_{\alpha}/\Q}=1$,  we conclude by  Theorem \ref{thm:therm1}
and~\ref{Thm: charact genus 0 stability} 
  that $1< r_{L_{\alpha}}<\infty$.  As we vary the triple $(p_1,p_2,p_3)$ 
 we obtain infinitely many different such extensions.

\end{document}